\documentclass[final, 11pt]{article}




\usepackage{enumerate}

\usepackage{amsmath}
\usepackage{amstext,amssymb,amsfonts}
\usepackage{fullpage}
\usepackage{bm}



\usepackage{todonotes}

\newcounter{mynotes}
\setcounter{mynotes}{0}


\usepackage{nameref}
\usepackage[pagebackref,colorlinks,linkcolor=blue,filecolor = blue,
citecolor = blue, urlcolor = blue,pdfstartview=FitH]{hyperref}
\usepackage[capitalize]{cleveref}


\usepackage{amsthm}
\usepackage{thmtools}

\declaretheorem[within=section]{theorem}
\declaretheorem[sibling=theorem]{corollary}

\declaretheorem[sibling=theorem]{lemma}
\declaretheorem[sibling=theorem]{claim}

\declaretheorem[sibling=theorem]{definition}
\declaretheorem[sibling=theorem]{Lemma+Definition}
\declaretheorem[sibling=theorem]{remark}

\declaretheorem[sibling=theorem]{proposition}

\newenvironment{proofof}[1]{\begin{trivlist} \item {\bf Proof
#1:~~}}
  {\qed\end{trivlist}}

\crefname{proposition}{Proposition}{Propositions}
\crefname{conjecture}{Conjecture}{Conjectures}
\crefname{claim}{Claim}{Claims}
\crefname{remark}{Remark}{Remarks}
\crefname{Lemma+Definition}{Lemma+Definition}{Lemma+Definition}

%

\newcounter{termcounter}
\renewcommand{\thetermcounter}{\Alph{termcounter}}
\crefname{term}{term}{terms}
\creflabelformat{term}{\textup{(#2#1#3)}}

\makeatletter
\def\term{\@ifnextchar[\term@optarg\term@noarg}
\def\term@optarg[#1]#2{%
  \textup{(#1)}%
  \def\@currentlabel{#1}%
  \def\cref@currentlabel{[][2147483647][]#1}%
  \cref@label[term]{#2}}
\def\term@noarg#1{%
  \refstepcounter{termcounter}%
  \textup{(\thetermcounter)}%
  \cref@label[term]{#1}}
\makeatother

\newcommand{\mrm}[1]{\mathrm {#1}}

\newcommand{\msf}[1]{\mathsf {#1}}

\newcommand{\ignore}[1]{}




\newcommand{\defeq}{\stackrel{\mathrm{def}}=}



\newcommand{\poly}{\mathrm{poly}}




\newcommand{\set}[1]{\left\{#1\right\}}


\newcommand{\norm}[1]{\lVert#1\rVert}




\definecolor{DSred}{rgb}{1,0,0}


\renewcommand{\leq}{\leqslant}
\renewcommand{\geq}{\geqslant}
\renewcommand{\ge}{\geqslant}
\renewcommand{\le}{\leqslant}
\renewcommand{\epsilon}{\varepsilon}
\newcommand{\eps}{\epsilon}



\newcommand{\R}{\mathbb{R}}
\newcommand{\C}{\mathbb{C}}
\newcommand{\Z}{\mathbb{Z}}
\newcommand{\N}{\mathbb{N}}
\newcommand{\F}{\mathbb{F}}
\newcommand{\T}{\mathbb{T}}
\newcommand{\U}{\mathbb{U}}

\newcommand{\cB}{\mathcal B}

\newcommand{\cI}{\mathcal I}

\newcommand{\cL}{\mathcal L}
\newcommand{\cM}{\mathcal M}

\newcommand{\cP}{\mathcal P}


\newcommand{\Esymb}{{\bf E}}

\newcommand{\Psymb}{{\bf Pr}}

\DeclareMathOperator*{\E}{\Esymb}

\DeclareMathOperator*{\ProbOp}{\Psymb}

\renewcommand{\Pr}{\ProbOp}




\newcommand{\expo}[1]{{\mathsf{e}\left(#1\right)}}









\def\T{\mathbb{T}}
\def\L{\mathcal{L}}

\def\rank{{\rm{rank}}}
\def\deg{{\rm{deg}}}

\def\cP{\mathcal{P}}

\def\deg{{\mathrm{deg}}}

\def\U{\mathbb{U}}

\def\cB{\mathcal{B}}
\def\LL{\mathcal{L}}
\def\D{\mathbb{D}}
\def\ind{\mathbf{1}}
\def\cL{\mathcal{L}}
\newcommand{\restate}[2]{\medskip
\noindent{\bf #1 (restated).}{\sl #2}}
\def\lc{\mathrm{lc}}
\def\cM{\mathcal{M}}
\def\cI{\mathcal{I}}

\def\poly{\mathrm{Poly}}

\definecolor{Blue}{rgb}{0,0,1}

\title{General systems of linear forms: equidistribution and true complexity}%

\author{Hamed Hatami \thanks{McGill University. \texttt{hatami@cs.mcgill.ca}. Supported by an NSERC, and an FQRNT grant.} \and Pooya Hatami \thanks{University of Chicago. \texttt{pooya@cs.uchicago.edu}.} \and Shachar Lovett \thanks{UC San Diego. \texttt{slovett@ucsd.edu}. Supported by NSF CAREER award 1350481.}}

\date{\today}

\begin{document}

\sloppy

\maketitle

\begin{abstract}
The densities of small linear structures (such as arithmetic progressions) in subsets of Abelian groups can be expressed as certain analytic averages involving linear forms. Higher-order Fourier analysis examines such averages by approximating the indicator function of a subset by a function of bounded number of polynomials. Then, to approximate the average, it suffices to know the joint distribution of the polynomials applied to the linear forms. We prove a near-equidistribution theorem that describes these distributions for the group $\F_p^n$ when $p$ is a fixed prime. This fundamental fact is equivalent to a strong near-orthogonality statement regarding the higher-order characters, and was previously known only under various extra assumptions about the linear forms.

As an application of our near-equidistribution  theorem, we settle a conjecture of Gowers and Wolf on the true complexity of systems of linear forms for the group $\F_p^n$.
\end{abstract}


\section{Introduction}

Gowers' seminal work in combinatorial number theory~\cite{Gow01} initiated an extension of the classical Fourier analysis, called \emph{higher-order Fourier analysis} of Abelian groups. Higher-order Fourier analysis has been very successful in dealing with problems  regarding the densities of small linear structures (e.g. arithmetic progressions) in subsets of Abelian groups. It is possible to express such densities as certain analytic averages. For example, the density of the three term arithmetic progressions in a subset $A$ of an Abelian group $G$ can be expressed as $\E_{x,y\in G}\left[ \ind_A(x)\ind_A(x+y)\ind_A(x+2y)\right].$ More generally, one is often interested in analyzing
\begin{equation}\label{eq:linearAvgGeneralSets}
\E_{x_1,\ldots,x_k\in G} \big[\ind_A(L_1(x_1,\ldots,x_k))\cdots \ind_A(L_m(x_1,\ldots,x_k)) \big],
\end{equation}
where each $L_i$ is a linear form on $k$ variables. Averages of this type are of interest in computer science, additive combinatorics, and analytic number theory.

In this paper we are only interested in the group  $\F^n$ where $\F=\F_p$ for a fixed prime $p$ and $n$ is large. In the classical Fourier analysis of $\F^n$, a function is expressed as a linear combination of the characters of $\F^n$. Note that the  characters of $\F^n$  are exponentials of linear polynomials: for $\alpha \in \F^n$, the corresponding character is defined
as $\chi_\alpha(x) = \expo{\sum_{i=1}^n \alpha_i x_i}$, where $\expo{a}:=e^{\frac{2 \pi i}{p}a}$ for $a \in \F$. In higher-order Fourier analysis, the linear polynomials are replaced by higher degree polynomials, and one would like to approximate a function $f:\F^n \to \C$ by a linear combination of the functions $\expo{P}$, where each $P$ is a polynomial of a certain degree. The existence of such approximations is a consequence of the the so-called ``inverse theorems'' for Gowers norms which are established in a sequence of papers by Bergelson, Green, Samorodnitsky, Szegedy, Tao, and Ziegler~\cite{MR2663409,MR2948765,Balazs,MR2950773,MR2815606,MR2594614,MR2402476}.

Higher-order Fourier expansions are extremely useful in studying averages that are defined through linear structures. To analyze the average in \Cref{eq:linearAvgGeneralSets}, one approximates $1_A \approx \Gamma(P_1,\ldots,P_C)$ where $\Gamma:\F^C \to \C$ is a function  that is applied to a constant number of low degree polynomials $P_1,\ldots,P_C:\F^n \to \F$. Then applying the classical Fourier transform to $\Gamma$ yields the higher-order Fourier expansion
$$\ind_A \approx \sum_{\alpha \in \F^C} \widehat{\Gamma}(\alpha) \; \expo{\sum_{i=1}^n \alpha_i P_i},$$
where the coefficients $\widehat{\Gamma}(\alpha)$ are complex numbers.

\paragraph{Near-Orthogonality and Equidistribution.} One of the important and useful properties of the classical  Fourier characters is that they form an orthonormal basis. For higher-order Fourier expansions to be useful, one needs a similar orthogonality for the higher-order characters $\expo{\sum_{i=1}^n \alpha_i P_i}$ appearing in the expansion, or at least an approximation of it. This approximate orthogonality  is established by Green and Tao~\cite{MR2592422} and Kaufman and Lovett~\cite{kaufman-lovett}, and is in fact equivalent to a  near-equidistribution statement: the polynomials in the approximation $1_A \approx \Gamma(P_1,\ldots,P_C)$ can be chosen in such a way that  the distribution of $(P_1(x),\ldots,P_C(x))$ is close to the uniform  distribution on $\F^C$ when $x$ is chosen uniformly at random from $\F^n$.

However, this is not completely satisfactory, as to study the averages of the form \Cref{eq:linearAvgGeneralSets}, one needs to understand the distribution of the more sophisticated  random variable
$$
\left(
\begin{array}{cccc}
P_1(L_1(X)) & P_2(L_1(X)) & \ldots & P_C(L_1(X)) \\
P_1(L_2(X)) & P_2(L_2(X)) & \ldots & P_C(L_2(X)) \\
\vdots  & & & \vdots \\
P_1(L_m(X)) & P_2(L_m(X)) & \ldots & P_C(L_m(X)) \\
\end{array}
\right),
$$
where $X=(x_1,\ldots,x_k)$ is the uniform random variable taking values in  $(\F^n)^k$.
Since polynomials of a given degree satisfy various linear identities (e.g. every degree one polynomial $P$ satisfies $P(x+y+z)=P(x+y)+P(x+z)-P(x)$), it is no longer possible to choose the polynomials in a way that this  random matrix is almost uniformly distributed on $\F^{C \times m}$. Therefore, in this case one would like to obtain an almost uniform distribution on the points of $\F^{C \times m}$ that are consistent with these linear identities. Note that while~\cite{MR2592422,kaufman-lovett} only say that the entries in each row of this matrix are nearly independent, such a stronger near-equidistribution would in particular imply that  the columns of this matrix are nearly independent.

Hatami and Lovett~\cite{HL11b} established this strong near-equidistribution in the case where the characteristic of the field $\F$ is greater than the degree of the involved polynomials. Bhattacharyya, {\it et al.}~\cite{BFHHL13} extended the result of \cite{HL11b} to the general characteristic case, but under the extra assumption that the system of linear forms is affine, i.e. there is a variable that appears with coefficient $1$ in all the linear forms. Finally, in the present paper,  in \Cref{thm:nearorthogonality} we prove the near-equidistribution statement without any extra assumptions on the linear forms.

\paragraph{A conjecture of Gowers and Wolf.} In dealing with the averages of the form \Cref{eq:linearAvgGeneralSets}  a question arises naturally: Given such an average, what is the smallest $k$ such that there is an approximation of $\ind_A$ with a linear combination of a few higher-order characters of degree at most $k$ that affects the average only negligibly? This question was asked and studied by Gowers and Wolf~\cite{MR2578471} who conjectured a simple characterization for this value, and verified it for the case of large $|\F|$ in~\cite{MR2773103}.  As an application of our near-orthogonality result, we settle the Gowers-Wolf conjecture in full generality on $\F^n$. In the setting of functions on $\mathbb{Z}_N$,  Green and Tao~\cite{MR2815606} established similar results and characterizations.

\paragraph{Homogeneous non-classical polynomials.} The main difficulty in dealing with fields of low characteristic is that in the higher-order Fourier expansions, instead of the exponentials of classical polynomials, one has to work with exponentials of a generalization of them which are referred to as  ``non-classical'' polynomials. Recall that a classical polynomial is  homogeneous if all of its monomials are of the same degree. A useful property of a homogeneous classical polynomial $P(x)$ of degree $d$ is that $P(cx)=c^d P(x)$, for every $c\in \F$. We use this property to extend the definition of homogeneity to non-classical polynomials. An ingredient of the proof of our near-equidistribution result is a statement about non-classical polynomials which we believe is of independent interest. In \Cref{thm:homogeneous-basis} we show that homogeneous multivariate (non-classical) polynomials span the space of multivariate (non-classical) polynomials. We later use this to prove our near-equidistribution results for homogeneous polynomials.

\section{Notation and Preliminaries}

Fix a prime field $\F = \F_p$ for a prime $p \geq 2$.
Throughout the paper, we fix $\zeta\in \F^*$ a generator of $\F^*$. 
Define $|\cdot|$
to be the standard map from $\F$ to $\{0,1,\dots,p-1\} \subset
\Z$. Let $\mathbb{D}$ denote the complex unit disk $\{z \in \C: |z| \le 1\}$.

For integers $a,b$, we let $[a]$ denote the set $\{1,2,\dots,a\}$
and $[a,b]$ denote the set $\{a, a+1, \dots, b\}$.  For real numbers, $\alpha,\sigma, \eps$, we use the shorthand $\sigma= \alpha \pm \eps$ to denote $\alpha-\eps \leq \sigma \leq \alpha + \eps$. The power set of a set $S$ is denoted by $\cP(S)$. The zero element in $\F^n$ is denoted by $\underline{0}$. We will denote by lower case letters, e.g. $x,y$, elements of $\F^n$.  We use capital letters, e.g. $X=(x_1,\ldots, x_k)\in (\F^n)^k$, to denote tuples of variables.

\begin{definition}
A linear form on $k$ variables is a vector $L=(\ell_1,\ldots,\ell_k)\in \F^k$ and it maps $X=(x_1,\ldots,x_k)\in (\F^n)^k$ to $L(X)= \sum_{i=1}^k \ell_i x_i\in \F^n$.
\end{definition}

For a linear form $L=(\ell_1,\ldots,\ell_k)\in \F^k$ we define $|L|\defeq \sum_{i=1}^k |\ell_i|$.

\subsection{Higher-order Fourier Analysis}
We need to recall some definitions and results about  higher-order Fourier analysis. Most of the material in this section is directly quoted from the full version of~\cite{BFHHL13}.

\begin{definition}[Multiplicative Derivative]\label{multderiv}
Given a function $f: \F^n \to \C$ and an element $h \in \F^n$, define
the {\em multiplicative derivative in direction $h$} of $f$ to be the
function $\Delta_hf: \F^n \to \C$ satisfying $\Delta_hf(x) =
f(x+h)\overline{f(x)}$ for all $x \in \F^n$.
\end{definition}

The \emph{Gowers norm} of order $d$ for a function $f: \F^n \to \C$ is the
expected multiplicative derivative of $f$ in $d$ random directions at
a random point.

\begin{definition}[Gowers norm]\label{gowers}
Given a function $f: \F^n \to \C$ and an integer $d \geq 1$, the {\em
  Gowers norm of order $d$}  for $f$ is given by
$$\|f\|_{U^d} = \left|\E_{y_1,\dots,y_d,x \in \F^n} \left[(\Delta_{y_1}
\Delta_{y_2} \cdots \Delta_{y_d}f)(x)\right]\right|^{1/2^d}.$$
\end{definition}
Note that as $\|f\|_{U^1}= |\E\left[f\right]|$ the Gowers norm of order $1$ is only a semi-norm. However for $d>1$, it is not difficult to show that
$\|\cdot\|_{U^d}$ is indeed a norm.

If $f = e^{2\pi i P/p}$ where $P: \F^n \to \F$ is a polynomial
of degree $< d$, then $\|f\|_{U^d} = 1$. If $d < p$ and $\|f\|_\infty \le 1$, then in fact, the
converse holds, meaning that any  function $f: \F^n \to \C$ satisfying
$\|f\|_\infty \le 1$ and $\|f\|_{U^d} = 1$ is of this form. But when $d \geq p$, the converse
is no longer true. In order to characterize functions $f : \F^n \to
 \C$ with $\|f\|_\infty \le 1$ and $\|f\|_{U^d}=1$, one needs to define the notion of {\em
  non-classical polynomials}.

Non-classical polynomials might not be necessarily $\F$-valued. We need to
introduce some notation.
Let $\T$ denote the circle group $\R/\Z$. This is an Abelian group
with group operation denoted $+$. For an integer $k \geq 0$, consider the subgroup $\frac{1}{p^k} \Z/\Z \subseteq \T$.
 Let $\msf{e}: \T \to \C$ denote the character
$\expo{x} = e^{2\pi i x}$.
\begin{definition}[Additive Derivative]\label{addderiv}
Given a function\footnote{We try to adhere to the following convention: upper-case letters (e.g. $F$ and
  $P$) to denote functions mapping from $\F^n$ to $\T$ or to $\F$,
  lower-case   letters (e.g. $f$ and $g$) to denote functions mapping
  from $\F^n$ to $\C$, and upper-case Greek letters (e.g. $\Gamma$ and
$\Sigma$) to denote functions mapping  $\T^C$ to $\T$.} $P:
\F^n \to \T$ and an element $h \in \F^n$, define
the {\em additive derivative in direction $h$} of $f$ to be the
function $D_hP: \F^n \to \T$ satisfying $D_hP(x) = P(x+h) - P(x)$
for all $x \in \F^n$.
\end{definition}
\begin{definition}[Non-classical polynomials]\label{poly}
For an integer $d \geq 0$, a function $P: \F^n \to \T$ is said to be a
{\em non-classical polynomial of degree $\leq d$} (or simply a
{\em polynomial of degree $\leq d$}) if for all $y_1,
\dots, y_{d+1}, x \in \F^n$, it holds that
\begin{equation}\label{eqn:poly}
(D_{y_1}\cdots D_{y_{d+1}} P)(x) = 0.
\end{equation}
The {\em degree} of $P$ is the smallest $d$ for which the above holds.
A function $P : \F^n \to \T$ is said to be a {\em classical polynomial of degree
$\leq d$} if it is a non-classical polynomial of degree $\leq d$
whose image is contained in $\frac{1}{p} \Z/\Z$.
\end{definition}

It is a direct consequence of the definition that a function $f :
\F^n \to \C$ with $\|f\|_\infty \leq 1$ satisfies $\|f\|_{U^{d+1}} =
1$ if and only if $f = \expo{P}$ for a
(non-classical) polynomial $P: \F^n \to \T$ of degree $\leq d$.  We denote by $\poly(\F^n\rightarrow \T)$  and $\poly_{\leq d}(\F^n\rightarrow \T)$, respectively, the set of all non-classical polynomials, and the ones of degree at most $d$.

The following lemma of Tao and Ziegler~\cite{MR2948765} shows that a classical
polynomial $P$ of degree $d$ must always be of the form $x \mapsto \frac{|Q(x)|}{p}$, where $Q : \F^n \to \F$ is a polynomial (in the usual sense) of degree $d$, and $|\cdot|$ is the standard map from $\F$ to
$\set{0,1,\dots,p-1}$. This lemma also characterizes the structure of non-classical
polynomials.

\begin{lemma}[Lemma 1.7 in \cite{MR2948765}]\label{struct}
A function $P: \F^n \to \T$ is a polynomial of degree $\leq d$ if and
only if $P$ can be represented as
$$P(x_1,\dots,x_n) = \alpha + \sum_{0\leq d_1,\dots,d_n< p; k \geq 0:
  \atop {0 < \sum_i d_i \leq d - k(p-1)}} \frac{ c_{d_1,\dots, d_n,
  k} |x_1|^{d_1}\cdots |x_n|^{d_n}}{p^{k+1}} \mod 1,
$$
for a unique choice of $c_{d_1,\dots,d_n,k} \in \set{0,1,\dots,p-1}$
and $\alpha \in \T$.  The element $\alpha$ is called the {\em
  shift} of $P$, and the largest integer $k$ such that there
exist $d_1,\dots,d_n$ for which $c_{d_1,\dots,d_n,k} \neq 0$ is called
the {\em depth} of $P$. A depth-$k$ polynomial $P$ takes values in a coset of the subgroup $\U_{k+1}\defeq \frac{1}{p^{k+1}} \Z/\Z$. Classical polynomials correspond to
polynomials with $0$ shift and $0$ depth.
\end{lemma}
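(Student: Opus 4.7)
The plan is to prove both directions by induction on $d$, concentrating the real work in a single explicit calculation: the basis monomial
$$M_{\vec d,k}(x)\defeq\frac{|x_1|^{d_1}\cdots|x_n|^{d_n}}{p^{k+1}}\bmod 1,\qquad 0\le d_i<p,\ \textstyle\sum_i d_i>0,$$
has degree exactly $\sum_i d_i + k(p-1)$. Given this, the ``if'' direction is immediate from subadditivity of degrees, and the ``only if'' direction together with uniqueness will follow from a dimension count: the number of proposed basis monomials of degree $\le d$ exactly matches $\dim_{\F_p}\bigl(\poly_{\le d}(\F^n\to\T)/\T\bigr)$, which one computes directly by iterating $D_h$.

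The degree computation is the technical core, and I would prove it by induction on the depth $k$. The base case $k=0$ reduces to the standard correspondence between the non-classical degree of $|Q|/p\bmod 1$ and the ordinary polynomial degree of $Q\in\F[x_1,\ldots,x_n]/(x_i^p-x_i)$, verified by comparing $D_h$ on $\T$-valued functions with the classical derivative calculus mod $p$. For the inductive step I would expand $(D_h M_{\vec d,k})(x)$ using the integer identity $|a+b|=|a|+|b|-p\cdot\mathbf{1}_{|a|+|b|\ge p}$, applied to each factor $|x_i+h_i|^{d_i}$ via the binomial theorem over $\Z$. After dividing by $p^{k+1}$ and reducing mod $1$, the derivative decomposes into two kinds of terms: monomials of the same form $M_{\vec d',k}$ with $\sum_i d'_i<\sum_i d_i$ arising from the ``no-carry'' piece, and ``carry'' monomials $M_{\vec d'',k-1}$ whose depth drops by one but whose $\sum_i d''_i$ may increase by up to $p-1$ per carry. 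By the inductive hypothesis every such term has degree at most $\sum_i d_i+k(p-1)-1$, yielding $\deg M_{\vec d,k}\le \sum_i d_i+k(p-1)$. A matching lower bound is obtained by evaluating an iterated derivative $D_{e_{i_1}}\cdots D_{e_{i_D}}M_{\vec d,k}$ at a carefully chosen point so that exactly one carry-producing pattern survives.

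For the ``only if'' direction I would peel off the largest-depth part. Define the depth of $P$ abstractly as the smallest $k$ with $p^{k+1}P$ constant on $\F^n$. If the depth is $0$, then $P$ is classical and $P=\alpha+|Q|/p\bmod 1$ for a unique $Q:\F^n\to\F$ of classical degree $\le d$; expanding $Q$ in the monomial basis of $\F[x_1,\ldots,x_n]/(x_i^p-x_i)$ gives the claimed form. If the depth is $k\ge 1$, then $pP$ has depth $k-1$, and a separate lemma---itself proven by the derivative analysis above---shows $\deg(pP)\le d-(p-1)$, so the inductive hypothesis applies to $pP$, and ``dividing by $p$'' in $\T$ term-by-term recovers a canonical form for $P$ up to a shift, which is absorbed into $\alpha$. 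Uniqueness of the coefficients follows from the dimension count, and the coset-of-$\U_{k+1}$ claim is immediate from reading off denominators in the canonical form. The hardest step throughout is the $p$-adic carry analysis in the degree calculation together with the auxiliary $\deg(pP)\le\deg(P)-(p-1)$ bound; everything else is organizational once that identity is established.
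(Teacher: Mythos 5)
First, note that the paper does not prove this statement at all: it is quoted verbatim as Lemma~1.7 of Tao--Ziegler \cite{MR2948765}, so there is no internal proof to compare against. Judged on its own terms, your plan follows the standard route (essentially the Tao--Ziegler argument): establish that the monomial $|x_1|^{d_1}\cdots|x_n|^{d_n}/p^{k+1}$ has degree exactly $\sum_i d_i+k(p-1)$ via a $p$-adic carry analysis, prove $\deg(pP)\le\deg(P)-(p-1)$, and then assemble the expansion by induction on depth. You have correctly located the technical core, and the carry analysis as you describe it does yield the degree upper bound.

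Two steps as written are genuinely broken, though both are repairable. First, $\poly_{\le d}(\F^n\to\T)/\T$ is \emph{not} an $\F_p$-vector space: multiplication by $p$ does not annihilate a depth-$k$ polynomial with $k\ge 1$ (it sends $|x|^{s}/p^{k+1}$ to $|x|^{s}/p^{k}$), so ``$\dim_{\F_p}$'' is ill-posed, and the comparison must be between group orders. More seriously, computing $\bigl|\poly_{\le d}(\F^n\to\T)/\T\bigr|$ ``directly by iterating $D_h$'' is not a routine side computation --- it is the spanning statement in disguise, and your sketch gives no mechanism for it. The standard fix is to prove spanning constructively, e.g.\ via the Newton expansion $P(x)=\sum_{J}\bigl(\prod_i\binom{|x_i|}{j_i}\bigr)\cdot(\partial^J P)(\underline{0})$ in the binomial basis, rather than by counting. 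Second, your depth-induction step asserts that knowing $pP$ recovers $P$ ``up to a shift, which is absorbed into $\alpha$.'' This is false: if $p\tilde Q=pP$ then $P-\tilde Q$ is an arbitrary function into $\frac1p\Z/\Z$, i.e.\ an arbitrary \emph{classical function}, not a constant. You must additionally argue that a lift $\tilde Q$ of degree $\le d$ can be chosen among your monomials, so that $P-\tilde Q$ is a classical polynomial of degree $\le d$, and then feed that difference into the depth-$0$ case; the constant $\alpha$ cannot absorb it. With these two repairs (and with the lower-bound evaluation for $\deg M_{\vec d,k}$, which you only gesture at, actually carried out), the plan becomes a correct proof.
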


Note that \Cref{struct} immediately implies the following important
observation\footnote{Recall that $\T$ is an additive
 group. If $n \in \Z$ and $x \in \T$, then $nx$ is shorthand for
 $x + \cdots + x$ if $n \geq 0$ and
 $-x - \cdots - x$ otherwise, where there are $|n|$ terms in both expressions.}:
\begin{remark}
If $Q: \F^n \to \T$ is a polynomial of degree $d$ and depth $k$, then
$pQ$ is a polynomial of degree $\max(d-p+1, 0)$ and depth $k-1$. In
other words, if $Q$ is classical, then $pQ$ vanishes, and otherwise,
its degree decreases by $p-1$ and its depth by $1$. Also, if $\lambda
\in [1, p-1]$ is an integer, then $\deg(\lambda Q) = d$ and
$\mrm{depth}(\lambda Q) = k$.
\end{remark}
For convenience of exposition, we will assume throughout this
paper that the shifts of all polynomials are zero. This can be done
without affecting any of the results in this work. Hence, all
polynomials of depth $k$ take values in $\U_{k+1}$.

Given a degree-$d$ non-classical polynomial $P$, it is often useful to consider the properties of its $d$-th derivative. Motivated by this, we give the following definition.

\begin{definition}[Derivative Polynomial]\label{dfn:derivativepolynomial}
Let $P:\F^n\rightarrow \T$ be a degree-d polynomial, possibly non-classical. Define the derivative polynomial
$\partial P:(\F^n)^d\rightarrow \T$ by the following formula
$$
\partial P(h_1,\ldots,h_d)\defeq D_{h_1}\cdots D_{h_d}P(0),
$$
where $h_1,\ldots,h_d\in \F^n$.\footnote{Notice since $P$ is a degree $d$ polynomial, $D_{h_1}\ldots D_{h_d}P(x)$ does not depend on $x$ and thus we have the identity $\partial P(h_1,\ldots,h_d)=D_{h_1}\ldots D_{h_d}P(x)$ for any choice of $x\in \F^n$.}
Moreover for $k< d$ define
$$
\partial_kP(x, h_1,\ldots,h_k)\defeq D_{h_1}\cdots D_{h_k}P(x).
$$
\end{definition}

The following lemma shows some useful properties of the
derivative polynomial.
\begin{lemma}\label{lem:derivativepolynomial}
\label{lem:derivativepoly}
Let $P:\F^n\rightarrow \T$ be a degree-d (non-classical) polynomial. Then the
polynomial $\partial P(h_1,\ldots,h_d)$ is
\begin{itemize}
\item[(i)] multilinear: $\partial P$ is additive in each $h_i$.
\item[(ii)] invariant under permutations of $h_1,\ldots, h_d$.
\item[(iii)] a classical nonzero polynomial of degree $d$.
\item[(iv)] homogeneous: All its monomials are of degree $d$.
\end{itemize}
\end{lemma}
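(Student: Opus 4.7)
I would prove the four properties in the order (ii), (i), (iii), (iv), since additivity in (i) crucially exploits $\deg P = d$ (via a $(d+1)$-fold derivative vanishing), and (iii)--(iv) will build on the multilinear structure from (i). For \textbf{symmetry (ii)}, observe that discrete derivatives commute as operators: the direct expansion $D_h D_{h'}f(x) = f(x+h+h') - f(x+h) - f(x+h') + f(x)$ is manifestly symmetric in $(h,h')$, so iterating yields invariance of $\partial P$ under every permutation of its arguments.

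For \textbf{additivity (i)}, I would use the operator identity $D_{h+h'} = D_h + D_{h'} + D_h D_{h'}$, which is a one-line computation. Applying $D_{h_2}\cdots D_{h_d}$ to both sides and evaluating at $0$ gives
\[
\partial P(h+h', h_2, \ldots, h_d) = \partial P(h, h_2, \ldots, h_d) + \partial P(h', h_2, \ldots, h_d) + D_{h_2}\cdots D_{h_d} D_h D_{h'} P(0),
\]
and the final term is a $(d+1)$-fold derivative of a degree-$d$ polynomial, hence zero by \Cref{poly}. Combined with (ii), this gives additivity in every argument.

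For \textbf{(iii)} and \textbf{(iv)}, I would first show $\partial P$ is classical using the Remark after \Cref{struct}: $pP$ has degree at most $d-(p-1)<d$, so $\Z$-linearity of the discrete derivative gives
\[
p \cdot \partial P(h_1,\ldots,h_d) = D_{h_1}\cdots D_{h_d}(pP)(0) = 0,
\]
since any $d$ iterated derivatives annihilate a polynomial of degree less than $d$. Hence $\partial P$ takes values in $\frac{1}{p}\Z/\Z$, i.e., is classical. For nonvanishing: if $\partial P \equiv 0$ then the $x$-independent quantity $D_{h_1}\cdots D_{h_d}P(x)$ vanishes identically, forcing $\deg P \le d-1$, a contradiction. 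Viewing $\partial P$ now as a classical polynomial in the $dn$ scalar coordinates $\{h_{i,j}\}$, additivity in each block $h_i$ forces every nonzero monomial to contain exactly one coordinate from each of the $d$ blocks, so every monomial has total degree exactly $d$; this proves $\deg\partial P = d$ and simultaneously establishes homogeneity (iv). The only mildly subtle step is the classical claim, which the $pP$-trick reduces to a single line; everything else is bookkeeping around the commutation and $\Z$-linearity of the derivative operators.
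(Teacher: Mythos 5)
Your proof is correct and follows essentially the same route as the paper's: commutativity of the derivative operators gives (ii), the derivative-of-a-sum identity together with the vanishing of $(d+1)$-fold derivatives of a degree-$d$ polynomial gives (i), nonvanishing is immediate from the definition of degree, and the fact that every monomial must contain exactly one variable from each block $h_i$ yields (iii)--(iv). The only (minor, welcome) difference is that you establish classicality explicitly via $p\cdot\partial P = D_{h_1}\cdots D_{h_d}(pP)(0)=0$, whereas the paper leaves that point implicit in its monomial-degree argument.
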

Notice that by multilinear we mean additive in each direction $h_i$, which is not the
usual use of the term ``multilinear''.

\begin{proof}
The proof follows by the properties of the additive derivative $D_h$. Multilinearity of $\partial P$
follows from linearity of the additive derivative, namely for every function $Q$ and directions $h_1,h_2$ we have the identity $D_{h_1+h_2}Q(x)= D_{h_1}Q(x)+D_{h_2}Q(x+h_1)$. The invariance under permutations of $h_1,\ldots, h_d$ is a result of commutativity of the additive derivatives. Since $P$ is
a degree-$d$ (non-classical) polynomial, $\partial P$ is nonzero by definition.  Notice that since $D_{\underline{0}}Q\equiv 0$ for any function $Q$, we have $\partial P(h_1,\ldots,h_d) = 0$ if any of $h_i$ is equal to zero. Hence every monomial of $\partial P$ must depend on all $h_i$'s. The properties (iii) and (iv) now follow from
this and the fact that $\deg(\partial P)\leq d$ and thus each monomial has exactly one variable from each $h_i$.
\end{proof}

%
%

\subsection{Rank of a Polynomial}
We will often need to study Gowers norms of exponentials of polynomials. As we describe below if this analytic
quantity is non-negligible, then there is an algebraic explanation for it: it is possible to decompose the polynomial as a function of a constant number of low-degree polynomials. To state this rigorously, let us define the notion of {\em rank} of a polynomial.

\begin{definition}[Rank of a polynomial]\label{def:rankpoly}
Given a polynomial $P : \F^n \to \T$ and an integer $d > 1$, the {\em $d$-rank} of
$P$, denoted $\msf{rank}_d(P)$, is defined to be the smallest integer
$r$ such that there exist polynomials $Q_1,\dots,Q_r:\F^n \to \T$ of
degree $\leq d-1$ and a function $\Gamma: \T^r \to \T$ satisfying
$P(x) = \Gamma(Q_1(x),\dots, Q_r(x))$. If $d=1$, then
$1$-rank is defined to be $\infty$ if $P$ is non-constant and $0$
otherwise.

The {\em rank} of a polynomial $P: \F^n \to \T$ is its $\deg(P)$-rank. We say $P$ is $r$-regular if $\msf{rank}(P) \ge r$.
\end{definition}

Note that for integer $\lambda \in [1, p-1]$,  $\mrm{rank}(P) =
\mrm{rank}(\lambda P)$. We also define the following weaker analytical notion of uniformity for a polynomial.

\begin{definition}[Uniformity]\label{def:uniformpoly}
Let $\epsilon>0$ be a real. A degree-$d$ polynomial $P:\F^n\rightarrow \T$ is said to be $\epsilon$-uniform if
$$
\norm{\expo{P}}_{U^d}<\epsilon.
$$
\end{definition}

The following theorem of Tao and Ziegler shows that high rank polynomials have small Gowers norm.

\begin{theorem}[Theorem 1.20 of \cite{MR2948765}]\label{thm:taoziegler}
For any $\epsilon>0$ and integer $d>0$, there exists an integer $r(d,\epsilon)$ such that the following is true. For any polynomial $P:\F^n\rightarrow \T$ of degree $\leq d$, if $\norm{\expo{P}}_{U^d}\geq \epsilon$, then $\rank_d(P)\leq r$.
\end{theorem}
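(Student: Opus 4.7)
My plan is to proceed by induction on the degree $d$. The base case $d=1$ is immediate: if $P$ has degree $\le 1$, then $\|\expo{P}\|_{U^1}=|\E_x\expo{P(x)}|$ equals $0$ unless $P$ is constant, so the hypothesis $\|\expo{P}\|_{U^1} \geq \eps$ forces $P$ to be constant and hence $\rank_1(P)=0$ by definition.

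For the inductive step, the central recursion---obtained from \Cref{gowers} by writing $\Delta_h\expo{P}=\expo{D_hP}$ and peeling off the first direction---is
\[
\|\expo{P}\|_{U^d}^{2^d} \;=\; \E_{h\in\F^n}\,\|\expo{D_hP}\|_{U^{d-1}}^{2^{d-1}}.
\]
Combining $\|\expo{P}\|_{U^d}\ge \eps$ with Markov's inequality produces a set $H\subseteq\F^n$ of density $\ge \tfrac12\eps^{2^d}$ such that $\|\expo{D_hP}\|_{U^{d-1}}\ge \eps'\defeq(\tfrac12\eps^{2^d})^{1/2^{d-1}}$ for every $h\in H$. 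Since $\deg(D_hP)\le d-1$, the inductive hypothesis uniformly bounds $\rank_{d-1}(D_hP)\le r'=r'(d-1,\eps')$ for all $h\in H$, i.e., every such derivative is a function of at most $r'$ polynomials of degree $\le d-2$.

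The real work is to convert this per-direction structural information into a single bounded-size decomposition of $P$ itself. A natural global object to examine is the derivative polynomial $\partial P:(\F^n)^d\to\T$ from \Cref{dfn:derivativepolynomial}, which by \Cref{lem:derivativepolynomial} is a \emph{classical} multilinear symmetric form of degree exactly $d$. A large $U^d$ norm of $\expo{P}$ translates into $\partial P$ being $\ge \eps^{2^d}$-biased as an $\F$-valued $d$-linear form, and a bias-vs-rank theorem for multilinear forms (of Green-Tao type, extended to low characteristic via the structure theorem \Cref{struct}) then forces $\partial P$ to decompose as a function of a bounded number of lower-degree forms. One then integrates this decomposition back up to $P$, successively absorbing the $D_hP$'s into a common polynomial factor via a regularity-and-refinement procedure, to obtain the desired bound on $\rank_d(P)$.

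I expect the main obstacles to be: (i) the low-characteristic, non-classical aspect of the argument, since $D_h$ need not strictly lower the degree for polynomials of positive depth, forcing careful bookkeeping of depths, shifts, and the subgroups $\U_{k+1}$ appearing in \Cref{struct}; and (ii) quantitative control of $r(d,\eps)$ across the induction, since each regularization step used to straighten the family $\{D_hP\}_{h\in H}$ into a common factor tends to incur tower-type losses in the parameters, and these must be absorbed into an explicit dependence on $d$ and $\eps$.
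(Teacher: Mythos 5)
This statement is an external result (Theorem 1.20 of Tao--Ziegler \cite{MR2948765}) that the paper imports without proof, so there is no internal argument to compare against; what can be assessed is whether your sketch would actually establish it. It would not, because the step you label ``the real work'' --- converting per-direction structure of the derivatives $D_hP$ (or a decomposition of the multilinear form $\partial P$) into a bounded-complexity decomposition of $P$ itself --- is left entirely as a black box, and that step is the entire content of the theorem. Two concrete failure points: first, Markov only gives you a set $H$ of directions of density about $\eps^{2^d}$ on which $D_hP$ has bounded rank, and assembling a global decomposition of $P$ from structural data on a sparse set of derivatives is exactly the ``linearization/cocycle'' difficulty at the heart of the inverse-theorem literature --- it is not handled by a generic regularity-and-refinement procedure. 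Second, the route through $\partial P$ runs into the symmetrization obstruction in low characteristic: even granting a bias-implies-low-partition-rank theorem for the classical multilinear form $\partial P$, recovering $P$ from its $d$-th derivative tensor requires dividing by $d!$, which is impossible when $p \le d$; this is precisely the regime where non-classical polynomials arise and where the elementary Green--Tao argument for classical high-characteristic polynomials breaks down. (A small side error: $D_h$ always strictly lowers the degree of a non-classical polynomial, by the very definition of degree via vanishing iterated derivatives; what fails in low characteristic is not that, but the ability to integrate the derivative data back up.)

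The actual proof of Theorem 1.20 in \cite{MR2948765} does not proceed by the induction you describe: it is deduced from the inverse theorem for the Gowers norms over $\F_p^n$ (Theorem 1.11 of the same paper), which supplies a single non-classical polynomial $Q$ of degree $\le d-1$ with which $\expo{P}$ correlates, followed by a rigidity argument upgrading correlation to exact functional dependence on boundedly many degree-$(d-1)$ polynomials. The inverse theorem itself is one of the deepest results in the area and cannot be recovered by the elementary degree induction sketched here, so the proposal has a genuine, unfixable-as-stated gap rather than a repairable omission.
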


This immediately implies that a regular polynomial is also uniform.

\begin{corollary}\label{cor:rankvsuniformity}
Let $\epsilon, d,$ and $r(d,\epsilon)$ be as in \Cref{thm:taoziegler}. Every $r$-regular polynomial $P$ of degree $d$ is also $\epsilon$-uniform.
\end{corollary}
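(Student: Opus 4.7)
\begin{proofsketch}
The statement is essentially the contrapositive of \Cref{thm:taoziegler}, so the proof should be a short deduction rather than require any new ideas.

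The plan is as follows. Suppose $P:\F^n\to \T$ is a polynomial of degree $d$ which is $r$-regular, i.e.\ $\rank_d(P) = \rank(P) \geq r$, where $r = r(d,\eps)$ is the integer supplied by \Cref{thm:taoziegler}. Assume toward contradiction that $P$ is not $\eps$-uniform, meaning that $\norm{\expo{P}}_{U^d}\geq \eps$. Applying \Cref{thm:taoziegler} directly to $P$ yields $\rank_d(P)\leq r$, contradicting (after adjusting $r$ by at most an additive constant of one to match strict versus non-strict inequalities in the definition of $r$-regularity) the hypothesis that $P$ is $r$-regular. Therefore $\norm{\expo{P}}_{U^d}<\eps$, as desired.

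The only point requiring any care is the off-by-one issue between ``$\rank_d(P)\leq r$'' in the statement of \Cref{thm:taoziegler} and ``$\rank_d(P)\geq r$'' in the definition of $r$-regularity; this is handled by simply taking the bound from the theorem and incrementing by one if needed. I do not anticipate any genuine obstacle, since no property of the Gowers norm or of non-classical polynomials beyond \Cref{thm:taoziegler} itself is invoked.
\end{proofsketch}
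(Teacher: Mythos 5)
Your proposal is correct and matches the paper, which gives no explicit proof and simply presents the corollary as the immediate contrapositive of \Cref{thm:taoziegler}. Your attention to the off-by-one between $\rank_d(P)\leq r$ and $\rank(P)\geq r$ is more careful than the paper itself, which glosses over this point entirely.
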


\subsection{Polynomial Factors}
A high-rank polynomial of degree $d$ is, intuitively, a ``generic''
degree-$d$ polynomial. There are no unexpected ways to decompose it
into lower degree polynomials. Next, we will formalize the
notion of a generic collection of polynomials. Intuitively, it should
mean that there are no unexpected algebraic dependencies among the
polynomials. First, we need to set up some notation.

\begin{definition}[Factors] If $X$ is a finite set then by a \emph{factor} $\cB$ we mean simply a
partition of $X$ into finitely many pieces called \emph{atoms}.
\end{definition}

A function $f:X \to \C$ is called \emph{$\cB$-measurable} if it is constant on atoms of $\cB$. For any function $f : X \to \C$, we may define
the conditional expectation
$$\E[f|\cB](x)=\E_{y \in \cB(x)}[f(y)],$$
where $\cB(x)$ is the unique atom in $\cB$ that contains $x$. Note that $\E[f|\cB]$ is $\cB$-measurable.

A finite collection of functions $\phi_1,\ldots,\phi_C$ from $X$ to some other space $Y$ naturally define a factor $\cB=\cB_{\phi_1,\ldots,\phi_C}$ whose atoms are sets of the form $\{x: (\phi_1(x),\ldots,\phi_C(x))= (y_1,\ldots,y_C) \}$ for some $(y_1,\ldots,y_C) \in Y^C$. By an abuse of notation
we also use $\cB$ to denote the map $x \mapsto (\phi_1(x),\ldots,\phi_C(x))$, thus also identifying the atom containing $x$ with
$(\phi_1(x),\ldots,\phi_C(x))$.

\begin{definition}[Polynomial factors]\label{factor}
If $P_1, \dots, P_C:\F^n \to \T$ is a sequence of polynomials, then the factor $\cB_{P_1,\ldots,P_C}$ is called a {\em polynomial factor}.
\end{definition}

The {\em complexity} of $\cB$, denoted $|\cB|:=C$, is the number of defining polynomials. The {\em degree} of $\cB$ is the maximum degree among its defining polynomials $P_1,\ldots,P_C$. If $P_1,\ldots,P_C$ are of depths $k_1,\ldots,k_C$, respectively, then the number of atoms of $\cB$ is at most $\prod_{i=1}^C p^{k_i+1}$.

\begin{definition}[Rank and Regularity]
A polynomial factor $\cB$ defined by a sequence of polynomials $P_1,\ldots,P_C:\F^n \rightarrow \T$ with respective depths $k_1,\ldots,k_C$ is said to have rank $r$ if $r$ is the least integer for which there exists $(\lambda_1, \ldots, \lambda_C)\in \Z^C$, with $(\lambda_1 \mod p^{k_1+1}, \ldots, \lambda_C \mod p^{k_C+1})\neq 0^C$, such that $\rank_d(\sum_{i=1}^C \lambda_iP_i) \leq r$, where $d=\max_i \deg(\lambda_i P_i)$.

Given a polynomial factor $\cB$ and a function $r:\Z_{>0}\rightarrow \Z_{>0}$, we say that $\cB$ is $r$-regular if $\cB$ is of rank larger than $r(|\cB|)$.
\end{definition}

Notice that by the above definition of rank for a degree-$d$ polynomial $P$ of depth $k$ we have
$$
\rank(\{P\})= \min\left\{ \rank_d(P),\rank_{d-(p-1)}(pP),\ldots, \rank_{d-k(p-1)}(p^kP)\right\}.
$$

We also define the following weaker analytical notion of uniformity for a factor along the same lines as \Cref{def:uniformpoly}.

\begin{definition}[Uniform Factor]
Let $\epsilon>0$ be a real. A polynomial factor $\cB$ defined by a sequence of polynomials $P_1,\ldots, P_C:\F^n \rightarrow \T$ with respective depths $k_1,\ldots, k_C$ is said to be $\epsilon$-uniform if for every collection $(\lambda_1,\ldots, \lambda_C)\in \Z^C$, with $(\lambda_1 \mod p^{k_1+1}, \ldots, \lambda_C \mod p^{k^C+1})\neq 0^C$
$$
\left\|\expo{\sum_i \lambda_i P_i}\right\|_{U^d}<\epsilon,
$$
where $d= \max_{i} \deg(\lambda_i P_i)$.
\end{definition}

\begin{remark}\label{remark:uniformityvsrank}
Similar to \Cref{cor:rankvsuniformity} it also follows from \Cref{thm:taoziegler}
that an $r$-regular degree-$d$ factor $\cB$ is also $\epsilon$-uniform when $r=r(d,\epsilon)$ is as in \Cref{thm:taoziegler}.
\end{remark}

\subsubsection{Regularization of Factors}
Due to the generic properties of regular factors, it is often useful to {\em refine} a given polynomial
factor to a regular one~\cite{MR2948765, BFL12, BFHHL13}. We will first formally define what we mean by refining a polynomial factor.

\begin{definition}[Refinement] \label{refine}
A factor $\cB'$ is called a {\em refinement} of $\cB$, and
denoted $\cB' \succeq \cB$, if the
induced partition by $\cB'$ is a combinatorial refinement of the partition
induced by $\cB$. In other words, if for every $x,y\in \F^n$,
$\cB'(x)=\cB'(y)$ implies $\cB(x)=\cB(y)$. We will write $\cB\succeq_{\text{syn}} \cB'$, if the polynomials defining $\cB'$ extend that of $\cB$.\ignore{\footnote{One sometimes needs to distinguish between semantic and syntactic refinements, where the former is as \Cref{refine} and the latter means that the polynomials defining $\cB'$ extends that of $\cB$. Being a syntactic refinement is stronger than being a
semantic refinement. But observe that if $\cB'$ is a semantic
refinement of $\cB$, then there exists a
syntactic refinement $\cB''$ of $\cB$ that induces the same
partition of $\F^n$, and for which $|\cB''|\leq
|\cB'|+|\cB|$, because we can define $\cB''$ by
simply adding the defining polynomials of $\cB$ to those of $\cB'$.}}
\end{definition}

The following lemma from \cite{BFHHL13} which uses a regularization theorem of \cite{MR2948765} allows one to regularize a given factor to any desired regularity.
\begin{lemma}[Polynomial Regularity Lemma~\cite{BFHHL13}]\label{factorreg}
Let $r: \Z_{>0} \to \Z_{>0}$ be a non-decreasing function and $d > 0$
be an integer. Then, there is a function
$C_{\ref{factorreg}}^{(r,d)}: \Z_{>0} \to \Z_{>0}$ such
that the following is true. Suppose $\cB$ is a factor defined by
polynomials $P_1,\dots, P_C : \F^n \to \T$ of degree at most $d$.
Then, there is an $r$-regular factor $\cB'$ consisting of  polynomials
$Q_1, \dots, Q_{C'}: \F^n \to \T$ of degree $\leq d$ such that $\cB'
\succeq \cB$ and $C' \leq  C_{\ref{factorreg}}^{(r,d)}(C)$.
\end{lemma}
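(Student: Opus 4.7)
The plan is to prove the lemma by an iterative refinement procedure controlled by a well-founded lexicographic measure. For a polynomial factor $\cB$ of degree at most $d$, define its \emph{profile} to be $M(\cB) \defeq (M_d, M_{d-1}, \ldots, M_1) \in \Z_{\geq 0}^d$, where $M_k$ is the number of defining polynomials of $\cB$ of degree exactly $k$; compare profiles lexicographically, prioritizing the highest degree. Starting from $\cB_0 \defeq \cB$, I would inductively build a refinement chain $\cB_0 \preceq \cB_1 \preceq \cB_2 \preceq \cdots$ (where $\preceq$ denotes refinement) whose profiles strictly decrease, and stop as soon as we reach a factor that is $r$-regular.

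At step $t$, if $\cB_t$ is $r$-regular we output $\cB' \defeq \cB_t$. Otherwise the rank of $\cB_t$ is at most $r(|\cB_t|)$, so there exists a tuple $(\lambda_1, \ldots, \lambda_{|\cB_t|}) \in \Z^{|\cB_t|}$, not vanishing modulo the appropriate $p^{k_i+1}$, for which $Q_t \defeq \sum_i \lambda_i P_i^{(t)}$ satisfies $\rank_{d^*}(Q_t) \leq r(|\cB_t|)$ with $d^* \defeq \max_i \deg(\lambda_i P_i^{(t)})$. By the definition of rank we may write $Q_t = \Gamma_t(R_1, \ldots, R_s)$ for some $s \leq r(|\cB_t|)$ and polynomials $R_j$ of degree at most $d^* - 1$. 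Choosing an index $i^*$ with $\deg(\lambda_{i^*} P_{i^*}^{(t)}) = d^*$, I would form $\cB_{t+1}$ by adjoining $R_1, \ldots, R_s$ to the defining polynomials of $\cB_t$ and discarding $P_{i^*}^{(t)}$; from the identity $\lambda_{i^*} P_{i^*}^{(t)} = \Gamma_t(R_1, \ldots, R_s) - \sum_{j \neq i^*} \lambda_j P_j^{(t)}$ one can reconstruct $P_{i^*}^{(t)}$ from the retained polynomials (possibly after adjoining a bounded number of further low-degree auxiliary polynomials), ensuring $\cB_{t+1} \succeq \cB_t$.

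By construction $M(\cB_{t+1}) < M(\cB_t)$ lexicographically, since we drop one polynomial of degree $d^*$ and add only polynomials of degree strictly less than $d^*$. Well-foundedness of the lexicographic order on $\Z_{\geq 0}^d$ therefore forces termination after finitely many steps. A standard induction on $d$ converts termination into a quantitative bound $|\cB'| \leq C_{\ref{factorreg}}^{(r,d)}(C)$ which is Ackermann-type in $r$ in general but computable for each fixed $r$ and $d$.

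The main obstacle is the reconstruction step when $P_{i^*}^{(t)}$ is non-classical of positive depth and $\lambda_{i^*}$ is not a unit modulo $p^{k_{i^*}+1}$: in that case the identity above determines $P_{i^*}^{(t)}$ only modulo a torsion element of $\U_{k_{i^*}+1}$. I would handle this by refining the profile into a list of (degree, depth) pairs and exploiting the observation recorded after \Cref{struct} that multiplication by $p$ lowers both degree (by $p-1$) and depth (by $1$); one then verifies that the enlarged measure still strictly decreases at each iteration. This non-classical bookkeeping is essentially the content of the Tao--Ziegler regularization theorem \cite{MR2948765} the lemma invokes, and the present statement follows by packaging that theorem in the form required.
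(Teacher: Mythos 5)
The paper offers no proof of \Cref{factorreg}: it is imported verbatim from \cite{BFHHL13}, whose proof in turn rests on the Tao--Ziegler regularization machinery of \cite{MR2948765}. Your sketch follows that standard iterative scheme, and the degree-profile termination argument and the Ackermann-type bound are fine as far as they go. The genuine gap is exactly the step you flag as the ``main obstacle'' and then do not resolve. When the low-rank combination $\sum_i \lambda_i P_i^{(t)} = \Gamma_t(R_1,\ldots,R_s)$ has $\lambda_{i^*}=p^a\mu$ with $a\ge 1$ and $\mu$ a unit modulo $p^{k_{i^*}+1}$, the retained data determine only the function $p^a P_{i^*}^{(t)}$, and hence determine the value $P_{i^*}^{(t)}(x)$ only up to an element of $\frac{1}{p^a}\Z/\Z$. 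So the factor obtained by discarding $P_{i^*}^{(t)}$ and adjoining $R_1,\ldots,R_s$ does \emph{not} refine $\cB_t$, and ``a bounded number of further low-degree auxiliary polynomials'' cannot fix this unless you say which ones: the missing information is carried by a polynomial of depth $a-1$ whose degree may still equal $\deg(P_{i^*}^{(t)})$, so adjoining it destroys the claim that the degree-only profile strictly decreases.

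The correct repair, which is the real content of the cited regularization theorem, is to use the unique monomial representation of \Cref{struct} to split $P_{i^*}^{(t)} = P' + S$, where $S$ collects the monomials of depth at least $a$ (so that $S$ is determined as a polynomial by $p^aS = p^aP_{i^*}^{(t)}$, which the low-rank relation supplies in terms of the retained polynomials) and $P'$ collects those of depth at most $a-1$; one keeps $P'$ in place of $P_{i^*}^{(t)}$. Since $\deg(P')$ need not drop, the lexicographic measure must indeed be taken over (degree, depth) pairs as you suggest --- but verifying that this enlarged measure strictly decreases and that semantic refinement is preserved at each step is precisely the work of the proof, not bookkeeping that can be deferred. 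Your closing sentence, that the statement ``follows by packaging'' the Tao--Ziegler theorem, concedes the point: that packaging \emph{is} the lemma, and as written your argument reduces to citing it, which is also all the paper does.
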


\subsection{Decomposition Theorems}\label{sec:decompose}
An important application of the inverse theorems are the ``decomposition theorems''~\cite{MR2669681, Tao07, MR2592422}.
These theorems allow one to express a given function $f$ with certain properties as a sum $\sum_{i=1}^k g_i$, where each $g_i$ has certain desired structural properties.  We refer the interested reader to \cite{MR2669681} and \cite{MR2359469} for a detailed discussion of this subject.  The following decomposition theorem is a consequence of an inverse theorem for Gowers norms (\cite[Theorem 1.11]{MR2948765}).

\begin{theorem}[Strong Decomposition Theorem for Multiple Functions]
\label{thm:strongdecomposition}
Let $m,d\geq 1$ be integers, $\delta>0$ a parameter, and let $r:\N\rightarrow \N$ be an arbitrary growth function. Given any functions $f_1,\ldots,f_m:\F^n\rightarrow \D$, there exists a decomposition
$$
f_i=g_i+h_i,
$$
such that for every $1 \le i \le m$,
\begin{enumerate}
\item $g_i= \E[f_i|\cB]$, where $\cB$ is an $r$-regular polynomial factor of degree at most $d$ and complexity $C\leq C_{\mathrm{max}}(p, m, d, \delta, r(\cdot)) $,
\item $\norm{h_i}_{U^{d+1}}\leq \delta$.
\end{enumerate}
\end{theorem}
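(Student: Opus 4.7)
\begin{proofsketch}
The plan is to run an iterative \emph{energy-increment} argument: we build a single refining polynomial factor $\cB_0 \preceq \cB_1 \preceq \cdots$ such that at each step, either every $f_i - \E[f_i|\cB_t]$ has Gowers $U^{d+1}$-norm at most $\delta$ (and we stop), or we can refine $\cB_t$ by adding one more polynomial of degree $\leq d$ obtained from the inverse Gowers theorem.

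First, I would define the energy
$$\Phi(\cB) \defeq \sum_{i=1}^m \norm{\E[f_i|\cB]}_2^2,$$
which lies in $[0,m]$ because each $f_i$ is $\D$-valued. Since conditional expectation on a refinement is an orthogonal projection onto a larger subspace, $\Phi$ is monotone: $\cB' \succeq \cB$ implies $\Phi(\cB') \geq \Phi(\cB)$. Start with the trivial factor $\cB_0$. At step $t$, let $g_i^{(t)} = \E[f_i|\cB_t]$ and $h_i^{(t)} = f_i - g_i^{(t)}$. If $\norm{h_i^{(t)}}_{U^{d+1}} \leq \delta$ for all $i$, stop. Otherwise pick some $i$ where this fails. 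By the inverse theorem of Tao--Ziegler for Gowers norms (the source of \Cref{thm:taoziegler}, in its correlation form), there exists a non-classical polynomial $P$ of degree $\leq d$ and a constant $c = c(p,d,\delta) > 0$ such that $|\langle h_i^{(t)}, \expo{P}\rangle| \geq c$.

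Next, I would form $\cB_t^\sharp$ by adjoining $P$ to the defining polynomials of $\cB_t$, and then apply the Polynomial Regularity Lemma (\Cref{factorreg}) with the given growth function $r$ and degree $d$ to produce an $r$-regular polynomial factor $\cB_{t+1} \succeq \cB_t^\sharp \succeq \cB_t$ of degree $\leq d$ and complexity $\leq C_{\ref{factorreg}}^{(r,d)}(|\cB_t|+1)$. The crucial point for the energy increment is that since $P$ is among the defining polynomials of $\cB_t^\sharp$ and $\cB_{t+1}$ refines $\cB_t^\sharp$, the function $\expo{P}$ is $\cB_{t+1}$-measurable. Writing $f_i = g_i^{(t+1)} + h_i^{(t+1)}$ and using orthogonality of $h_i^{(t+1)}$ to every $\cB_{t+1}$-measurable function,
$$|\langle h_i^{(t)}, \expo{P}\rangle| = |\langle g_i^{(t+1)} - g_i^{(t)}, \expo{P}\rangle| \leq \norm{g_i^{(t+1)} - g_i^{(t)}}_2,$$
and Pythagoras (since $g_i^{(t)}$ is the projection of $g_i^{(t+1)}$ onto $\cB_t$-measurable functions) gives $\norm{g_i^{(t+1)}}_2^2 - \norm{g_i^{(t)}}_2^2 = \norm{g_i^{(t+1)} - g_i^{(t)}}_2^2 \geq c^2$. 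Thus $\Phi(\cB_{t+1}) \geq \Phi(\cB_t) + c^2$, so the process terminates within $T \leq m/c^2$ iterations. Taking $\cB = \cB_T$, $g_i = g_i^{(T)} = \E[f_i|\cB]$, and $h_i = h_i^{(T)}$ yields the claimed decomposition, and the complexity bound $C_{\mathrm{max}}(p,m,d,\delta,r(\cdot))$ results from iterating $C_{\ref{factorreg}}^{(r,d)}$ a total of $T$ times.

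The main obstacle is the interaction between the inverse theorem and regularization: naively, regularizing $\cB_t^\sharp$ could scramble the polynomials so that the original $P$ is no longer one of the defining polynomials. This is handled by insisting that the regularity lemma produces a refinement of $\cB_t^\sharp$, which keeps $\expo{P}$ measurable and preserves the orthogonality step. A minor bookkeeping issue is that the energy-increment constant $c(p,d,\delta)$ coming from the inverse theorem depends only on $p,d,\delta$, hence $T$ and the final complexity depend only on $p,m,d,\delta$ and $r(\cdot)$, as required.
\end{proofsketch}
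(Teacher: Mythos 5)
Your proposal is correct: the paper does not prove \Cref{thm:strongdecomposition} itself (it cites it as a consequence of the Tao--Ziegler inverse theorem and the regularity machinery), and your energy-increment argument is exactly the standard derivation used in the cited sources, with the key points (measurability of $\expo{P}$ after regularizing a refinement of $\cB_t^\sharp$, Pythagoras for the increment, termination after $O(m/c^2)$ steps) handled correctly. The only cosmetic point is that $\norm{h_i^{(t)}}_\infty\le 2$ rather than $1$, so the inverse theorem should be applied to $h_i^{(t)}/2$, which only changes the constant $c(p,d,\delta)$.
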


\section{Main Results}

\subsection{Homogeneous Polynomials}\label{sec:homogeneouspolynomials}
Recall that a classical polynomial is called homogeneous if all of its monomials are of the same degree. Trivially a homogeneous classical polynomial $P(x)$ satisfies $P(cx)=|c|^d P(x)$ for every $c\in \F$. We will use this property to define the class of  non-classical homogeneous polynomials.

\begin{definition}[Homogeneity]\label{def:homogeneity}
A (non-classical) polynomial $P:\F^n \to \T$ is called homogeneous if for every $c \in \F$ there exists a $\sigma_c\in \Z$ such that $P(cx) = \sigma_c P(x) \mod 1$ for all $x$.
\end{definition}

\begin{remark}\label{remark:homogeneity}
It is not difficult to see that $P(cx)=\sigma_c P(x) \mod 1$ implies that $\sigma_c = |c|^{\deg(P)} \mod p$, a property that we will use later. Indeed for $d=\deg(P)$, we have $0 = \partial_d (P(cx) - \sigma_c P(x))= (|c|^d-\sigma_c) \partial_d P(x) \mod 1$. This, since $\partial_d P(x)$ is a nonzero degree-$d$ classical polynomial,  implies $\sigma_c = |c|^d \mod p$.
\end{remark}

Notice that for a polynomial $P$ to be homogeneous it suffices that there exists $\sigma\in \Z$ for which $P(\zeta x)= \sigma P(x) \mod 1$, where $\zeta$ is a generator of $\F^*$. If $P$ has depth $k$, then we can 
assume that $\sigma \in \Z_{p^{k+1}}$, as $p^{k+1} P \equiv 0$. The following lemma shows that $\sigma$ is uniquely determined for all homogeneous polynomials of degree $d$ and depth $k$. Henceforth, we will denote this unique value by $\sigma(d,k)$.

\begin{lemma}
For every $d$ and $k$, there is a unique $\sigma=\sigma(d,k)\in \Z_{p^{k+1}}$, such that for every homogeneous polynomial $P$ of degree $d$ and depth $k$, $P(\zeta x)= |\sigma| P(x) \mod 1$, where $|\cdot|$ is the natural map from $\Z_{p^{k+1}}$ to $\{0,1,\ldots,p^{k+1}-1\} \subset \Z$.
\end{lemma}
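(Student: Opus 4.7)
The strategy is to derive two arithmetic conditions that any valid $\sigma$ must satisfy and then observe that together they pin down a unique element of $\Z_{p^{k+1}}$ depending only on $d$ and $k$. The first condition will come from iterating the homogeneity relation, the second is already supplied by \Cref{remark:homogeneity}, and the uniqueness will follow from a Hensel-style lifting.

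For the first condition I claim $\sigma^{p-1} \equiv 1 \mod p^{k+1}$. Applying $x \mapsto \zeta x$ repeatedly to the relation $P(\zeta x) = \sigma P(x) \mod 1$ and using $\zeta^{p-1} = 1$ in $\F^*$ yields $P(x) = P(\zeta^{p-1} x) = \sigma^{p-1} P(x) \mod 1$, i.e.\ $(\sigma^{p-1} - 1) P(x) \equiv 0 \mod 1$ for every $x \in \F^n$. Since $P$ has depth exactly $k$, the remark following \Cref{struct} implies $p^k P \not\equiv 0$, so there is some $x_0$ with $P(x_0) \in \U_{k+1}\setminus \U_k$; such an element has order exactly $p^{k+1}$ in $\T$, so its $\Z$-annihilator is $p^{k+1}\Z$, forcing the claimed congruence. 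The second condition, $\sigma \equiv |\zeta|^d \mod p$, is exactly the content of \Cref{remark:homogeneity} (it drops out from looking at the top derivative $\partial P$, which is a nonzero classical polynomial of degree $d$).

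To finish I apply Hensel's lemma to $f(X) = X^{p-1} - 1$ over $\Z_{p^{k+1}}$. At the root $\sigma_0 := |\zeta|^d \in \F^*$ the derivative $f'(\sigma_0) = (p-1)\sigma_0^{p-2}$ is a unit modulo $p$, because $p-1$ and $|\zeta|^d$ are both nonzero in $\F$. Hence $\sigma_0$ admits a unique lift $\sigma \in \Z_{p^{k+1}}$ satisfying $\sigma^{p-1} \equiv 1 \mod p^{k+1}$, and this lift depends only on $d$ and $k$; I set $\sigma(d,k) := \sigma$. The edge case $p=2$ is even simpler: $p-1 = 1$, so the first condition alone forces $\sigma = 1$, consistent with $|\zeta|^d = 1$. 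The step that requires the most attention is the passage from $(\sigma^{p-1}-1)P \equiv 0$ to $\sigma^{p-1} \equiv 1 \mod p^{k+1}$, which hinges on producing a point $x_0$ at which $P$ attains an element of full order $p^{k+1}$ in $\U_{k+1}$; this is precisely where the definition of depth via the canonical form of \Cref{struct} enters.
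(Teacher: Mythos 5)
Your proof is correct and follows essentially the same route as the paper: both derive the two constraints $\sigma \equiv |\zeta|^d \pmod p$ and $\sigma^{p-1}\equiv 1 \pmod{p^{k+1}}$ and then show that these pin down $\sigma$ uniquely in $\Z_{p^{k+1}}$. The only divergence is in the final step, where you invoke Hensel's lemma to lift the simple root $|\zeta|^d$ of $X^{p-1}-1$ while the paper gives a self-contained counting argument (multiplication by the ratio $t$ of two candidates permutes the $p^k$ residues congruent to $1$ modulo $p$, forcing $t^{p^k}=1$ and hence $t=1$); your explicit observation that a depth-$k$ polynomial attains a value of exact order $p^{k+1}$ fills in a point the paper leaves implicit.
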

\begin{proof}
Let $P$ be a homogeneous polynomial of degree $d$ and depth $k$, and let $\sigma\in \Z_{p^{k+1}}$ be such that $P(\zeta x)= |\sigma| P(x) \mod 1$. By \cref{remark:homogeneity} we know that $|\sigma| = |\zeta|^d \mod p$. We also observe that $P(x)= P(\zeta^{p-1} x)= |\sigma|^{p-1} P(x) \mod 1$ from which it follows that $\sigma^{p-1}=1$. We claim that $\sigma\in \Z_{p^{k+1}}$ is uniquely determined by the two properties 
\begin{enumerate}
\item[i.] $|\sigma| = |c|^d \mod p$, and
\item[ii.] $\sigma^{p-1}= 1$.
\end{enumerate}
Suppose to the contrary that there are two nonzero values $\sigma_1,\sigma_2 \in \Z_{p^{k+1}}$ that satisfy the above two properties, and choose $t\in \Z_{p^{k+1}}$ such that $\sigma_1= t\sigma_2$. It follows from (i) that $t=1 \mod p$ and from (ii) that $t^{p-1}=1$. We will show that $t=1$ is the only possible such value in $\Z_{p^{k+1}}$.

Let $a_1,\ldots, a_{p^{k}}\in \Z_{p^{k+1}}$ be all the possible solutions to $x=1 \mod p$ in $\Z_{p^{k+1}}$. Note that $ta_1,\ldots, ta_{p^{k}}$ is just a permutation of the first sequence and thus
$$
t^{p^{k}}\prod a_i = \prod a_i.
$$
Consequently $t^{p^{k}} = 1$, which combined with $t^p=t$ implies  $t=1$.
\end{proof}

\Cref{struct} allows us to express every (non-classical) polynomial as a linear span of monomials of the form $\frac{ |x_1|^{d_1}\cdots |x_n|^{d_n}}{p^{k+1}}$. Unfortunately, unlike in the classical case, these monomials are not necessarily homogeneous, and for some applications it is important to express a polynomial as a linear span of homogeneous polynomials.
We show that this is possible as homogeneous multivariate (non-classical) polynomials linearly span the space of multivariate (non-classical) polynomials. We will present the proof of this theorem in \Cref{sec:proofhomogeneous-basis}.

\begin{theorem}\label{thm:homogeneous-basis}
There is a basis  for $\poly(\F^n\rightarrow \T)$ consisting only of homogeneous multivariate polynomials.
\end{theorem}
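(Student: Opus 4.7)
The strategy is to decompose each polynomial into homogeneous components via Maschke-style averaging over the $\F^*$-action on $\poly(\F^n \to \T)$. Fix a generator $\zeta \in \F^*$ and let $\tau_c P(x) := P(cx)$ for $c \in \F^*$; this preserves both degree and depth. Polynomials of depth $\le k$ take values in $\U_{k+1} \cong \Z_{p^{k+1}}$, and since $\gcd(p-1, p^{k+1}) = 1$, Hensel's lemma yields exactly $p-1$ solutions $\sigma \in \Z_{p^{k+1}}^*$ to $\sigma^{p-1} = 1$ (one lifting each element of $\F_p^*$), while $p-1$ is itself a unit in $\Z_{p^{k+1}}$. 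For each such $\sigma$ and each polynomial $P$ of depth $\le k$, define
\[
\pi_\sigma P \;:=\; \frac{1}{p-1} \sum_{c \in \F^*} \sigma^{-i_c}\, \tau_c P \qquad (c = \zeta^{i_c}).
\]
The substitution $c \mapsto \zeta c$ in the sum gives $\pi_\sigma P(\zeta x) = \sigma\, \pi_\sigma P(x)$, so $\pi_\sigma P$ is homogeneous with $\zeta$-eigenvalue $\sigma$; and the orthogonality $\sum_\sigma \sigma^{-j} = (p-1)\,\delta_{j, 0}$ in $\Z_{p^{k+1}}$ (valid because $\sigma^j - 1$ is a unit whenever $j \not\equiv 0 \bmod (p-1)$) gives $\sum_\sigma \pi_\sigma P = P$. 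In particular every polynomial is already a sum of homogeneous polynomials.

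To upgrade spanning to a basis, I would apply this projection to each standard basis monomial $M = \frac{|x_1|^{d_1} \cdots |x_n|^{d_n}}{p^{k+1}}$ from \Cref{struct} (of degree $d = \sum d_i + k(p-1)$), using $\sigma = \sigma(d, k)$, the unique $(p-1)$-th root of unity in $\Z_{p^{k+1}}^*$ congruent to $|\zeta|^d$ modulo $p$. Setting $H_M := \pi_{\sigma(d,k)} M$, the binomial expansion $|c x_i|^{d_i} = |c|^{d_i} |x_i|^{d_i} + p \cdot (\cdots)$ multiplied out yields $M(cx) = |c|^{\sum d_i} M(x) + (\text{depth } \le k-1)$; since $|c|^{k(p-1)} \equiv 1 \bmod p$, the $|c|^{\sum d_i}$-versus-$|c|^d$ discrepancy is absorbed into another depth-$(k-1)$ error (recall $pM$ has depth $k-1$); and since $\sigma^{-i_c} |c|^d \equiv 1 \bmod p$ by the choice of $\sigma$, averaging over $c$ yields $H_M = M + R_M$ with $R_M$ of depth $\le k-1$.

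By \Cref{struct}, $R_M$ expands in standard monomials of strictly smaller depth, so the generating-set change $M \mapsto H_M$ is upper-triangular with respect to the filtration by depth. Moreover, the Hensel-lift consistency $\sigma(d, k) \equiv \sigma(d-(p-1), k-1) \bmod p^k$ implies $p H_{(d_1, \ldots, d_n, k)} = H_{(d_1, \ldots, d_n, k-1)}$, so the ``ladder'' relations among the standard monomials are preserved. By induction on depth --- the base case $k=0$ being immediate, since each classical monomial $\frac{|x_1|^{d_1}\cdots|x_n|^{d_n}}{p}$ already satisfies $M(cx) = |c|^{\sum d_i} M(x) \bmod 1$ --- the collection $\{H_M\}$ is a basis of $\poly(\F^n \to \T)$ consisting entirely of homogeneous polynomials. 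The main technical obstacle is pinning down the explicit depth bound $M(cx) - |c|^d M(x) \in \U_k$ by careful bookkeeping of the binomial carries and the $|c|^{\sum d_i}$-versus-$|c|^d$ discrepancy; once that is in hand, the Maschke averaging and the triangular basis change are routine.
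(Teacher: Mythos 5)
Your proof is correct, and it takes a genuinely different route from the paper's. The paper first treats the univariate case by induction on degree: it uses $\deg\bigl(P(\zeta x)-|\zeta|^{d}P(x)\bigr)<d$ to expand the defect of the monomial $\frac{|x|^{s}}{p^{k}}$ in an inductively built homogeneous basis of lower degree, and then solves explicitly for a correction term $h_d=\frac{|x|^s}{p^k}-\sum_e\frac{b_e}{\sigma_e-A}h_e$ with $h_d(\zeta x)=Ah_d(x)$ (the divisions by $\sigma_e-A$ being legal because $\sigma_e\not\equiv A\bmod p$); the multivariate case is then obtained by taking products of univariate homogeneous polynomials and inducting on degree again. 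You instead work directly in the multivariate setting: since $|\F^*|=p-1$ is a unit in $\Z_{p^{k+1}}$ and the $(p-1)$-th roots of unity in $\Z_{p^{k+1}}^*$ are exactly the $p-1$ Hensel lifts of $\F_p^*$, your operators $\pi_\sigma$ are honest projections onto the $\sigma$-eigenspaces of the $\F^*$-action, and $\sum_\sigma\pi_\sigma=\mathrm{id}$ decomposes every polynomial into homogeneous components in one stroke. This is cleaner and more conceptual, and it also makes the word ``basis'' in the statement honest: your computation $\pi_{\sigma(d,k)}M=M+(\text{depth}\le k-1)$ together with the compatibility $pH_{(d_\bullet,k)}=H_{(d_\bullet,k-1)}$ shows that the passage from the standard monomials of \Cref{struct} to their homogenizations is unipotent with respect to the depth filtration and respects the ladder relations, whereas the paper's multivariate argument only verifies spanning (which is all that \Cref{cor:homogeneous-strong-decomposition} actually needs). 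The only step you should write out in full is the bound $M(cx)-|c|^{\deg M}M(x)\in\U_k$; your sketch (binomial carries give an error that is $p$ times an integer-valued polynomial over $p^{k+1}$, and $|c|^{k(p-1)}\equiv1\bmod p$ absorbs the exponent discrepancy) is the right argument and goes through.
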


\Cref{thm:homogeneous-basis} allows us to make the extra assumption in the strong decomposition theorem (\Cref{thm:strongdecomposition}) that the resulting polynomial factor $\cB$ consists only of homogeneous polynomials.

\begin{corollary}\label{cor:homogeneous-strong-decomposition}
Let $d\geq 1$ be an integer, $\delta>0$ a parameter, and let $r:\N\rightarrow \N$ be an arbitrary growth function. Given any functions $f_1,\ldots,f_m:\F^n\rightarrow \D$, there exists a decomposition
$$
f_i=g_i+h_i,
$$
such that or every $1 \le i \le m$,
\begin{enumerate}
\item $g_i= \E[f_i|\cB]$, where $\cB$ is an $r$-regular polynomial factor of degree at most $d$ and complexity $C\leq C_{\mathrm{max}}(p, d, \delta, r(\cdot))$, moreover $\cB$ only consists of homogeneous polynomials.
\item $\norm{h_i}_{U^{d+1}}\leq \delta$.
\end{enumerate}
\end{corollary}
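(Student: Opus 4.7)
The plan is to rerun the proof of \Cref{thm:strongdecomposition} with the polynomial regularity lemma (\Cref{factorreg}) replaced by a homogeneous analogue. The key observation is that \Cref{thm:homogeneous-basis} lets us replace any polynomial factor by a refinement whose defining polynomials are all homogeneous: given $\cB = \cB_{P_1, \ldots, P_C}$, express each $P_j$ as an integer linear combination of homogeneous polynomials $Q_{j,1}, \ldots, Q_{j,t_j}$ of degree at most $\deg(P_j)$; then the factor $\cB_{(Q_{j,k})}$ refines $\cB$, has complexity bounded in terms of $|\cB|$ and $d$, and consists only of homogeneous polynomials.

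This homogenization step can destroy regularity, so I need a homogeneous analogue of \Cref{factorreg}. I will obtain it by rerunning the iterative proof of \Cref{factorreg}, but maintaining the invariant that the current factor is homogeneous. Concretely, at each step, if the current factor $\cB$ is not $r$-regular, some nontrivial $\Z$-combination of its defining polynomials has rank $\leq r(|\cB|)$, which by definition means it can be written as a function of finitely many polynomials of strictly smaller degree; I then use \Cref{thm:homogeneous-basis} to replace each of these witnessing lower-degree polynomials by its homogeneous decomposition and add the resulting homogeneous polynomials to the factor. The termination argument of \Cref{factorreg} goes through unchanged since the degrees of the new polynomials are strictly smaller than $d$, and the complexity blow-up per step is only a bounded multiplicative factor (depending on the number of homogeneous basis elements of degree at most $d$ that can appear in such a decomposition).

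With this homogeneous regularity lemma in hand, I plug it into the (otherwise unchanged) proof of \Cref{thm:strongdecomposition}, whose only use of regularity is via \Cref{factorreg}; the inverse-theorem / energy-increment steps are oblivious to whether the defining polynomials are homogeneous. This yields \Cref{cor:homogeneous-strong-decomposition}, with the complexity function $C_{\max}$ absorbing the homogenization blow-up into the growth function $r(\cdot)$. The main obstacle is ensuring that \Cref{thm:homogeneous-basis} applied to a degree-$d$ polynomial expresses it using only homogeneous polynomials of degree at most $d$ (so that both the degree bound on $\cB$ and the termination of the iterative regularization are preserved); this is implicit in the basis claim of \Cref{thm:homogeneous-basis} but must be verified by inspection of its proof.
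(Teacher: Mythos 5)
Your overall strategy is the right one and, as far as one can tell, the one the authors intend (the paper gives no explicit proof of \Cref{cor:homogeneous-strong-decomposition}): interleave homogenization with the energy-increment/regularization iteration, maintaining homogeneity as an invariant, and check that \Cref{thm:homogeneous-basis} does not increase degrees so that termination is preserved. That last point you flag is indeed fine: the induction in the proof of \Cref{thm:homogeneous-basis} expresses a degree-$e$ monomial as a homogeneous degree-$e$ polynomial plus lower-degree terms, so every polynomial of degree $\le d$ lies in the span of homogeneous polynomials of degree $\le d$.

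There is, however, a genuine quantitative gap in your complexity accounting. You replace each $P_j$ by the homogeneous basis elements $Q_{j,1},\dots,Q_{j,t_j}$ occurring in its decomposition and assert that $t_j$ is bounded by ``the number of homogeneous basis elements of degree at most $d$ that can appear in such a decomposition.'' But that number is the dimension of $\poly_{\le d}(\F^n\to\T)$, which grows like $n^{d}$: a single degree-$d$ polynomial can have $\Theta(n^d)$ monomials and hence $\Theta(n^d)$ nonzero coordinates in the homogeneous basis. Taken literally, your homogenization step makes the factor complexity depend on $n$, which destroys the conclusion $C\le C_{\max}(p,d,\delta,r(\cdot))$. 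The fix is to group rather than split: if $P=\sum_k c_k H_k$ with each $H_k$ homogeneous, collect the terms according to the multiplier $\sigma(e,k)$ (equivalently, according to the degree--depth pair of $H_k$), and observe that a $\Z$-linear combination of homogeneous polynomials sharing the same multiplier $\sigma$ is again homogeneous with multiplier $\sigma$. This writes $P$ as a sum of at most $O_{p,d}(1)$ homogeneous polynomials of degree $\le \deg(P)$, each determined by $P$'s decomposition, and the factor they define still refines $\cB_{P}$. With this replacement in both the homogenization of the inverse-theorem polynomials and of the low-rank witnesses $Q_j$ inside the regularization loop, the rest of your argument (termination, refinement, degree bounds) goes through.
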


\subsection{Strong Near-Orthogonality}
As mentioned in the introduction the main result of this paper is a new near-orthogonality result for polynomial factors of high rank. Such a statement was proved in \cite{MR2592422, MR2948765} for systems of linear forms corresponding to repeated derivatives or equivalently Gowers norms, in \cite{HL11b} for the case when the field is of high characteristic but with arbitrary system of linear forms and in \cite{BFHHL13} for systems of affine linear forms. In \Cref{thm:nearorthogonality} we establish the near-orthogonality over any arbitrary system of linear forms. Before stating this theorem we need to introduce the notion of consistency.

\begin{definition}[Consistency]\label{def:consistent}
Let $\cL = \{L_1, \dots, L_m\}$ be a system of linear forms. A vector $(\beta_1, \dots, \beta_m) \in \T^m$ is said to be {\em $(d,k)$-consistent with $\cL$} if there exists a homogeneous polynomial $P$ of degree $d$ and depth $k$ and a point $X$ such that $P(L_i(X))=\beta_i$ for every $i \in [m]$. Let $\Phi_{d,k}(\cL)$ denote the set of all such vectors.
\end{definition}

It is immediate from the definition that $\Phi_{d,k}(\cL) \subseteq \U_{k+1}^{m}$ is a subgroup of $\T^m$, or more specifically, a subgroup of $\U_{k+1}^m$. Let
$$\Phi_{d,k}(\cL)^\perp := \left\{(\lambda_1,\ldots,\lambda_m) \in  \Z^m \ : \ \forall (\beta_1,\ldots,\beta_m) \in  \Phi_{d,k}(\cL), \ \sum \lambda_i \beta_i = 0   \right\}.$$
Equivalently $\Phi_{d,k}(\cL)^\perp$ is the set of all $(\lambda_1,\ldots,\lambda_m) \in  \Z^m$ such that $\sum_{i=1}^m \lambda_i P(L_i(X)) \equiv 0$ for every homogeneous polynomial $P$ of degree $d$ and depth $k$.

\begin{theorem}[Near Orthogonality over Linear Forms]\label{thm:nearorthogonality}
Let $L_1,\ldots, L_m$ be linear forms on $\ell$ variables and let $\cB=(P_1,\ldots,P_C)$ be an $\epsilon$-uniform polynomial factor for some $\epsilon\in (0,1]$ defined only by homogeneous polynomials. For every tuple $\Lambda$ of integers $(\lambda_{i,j})_{i\in [C], j\in [m]}$, define $P_\Lambda:(\F^n)^\ell\rightarrow \T$ as
$$
P_{\Lambda}(X)= \sum_{i\in [C], j\in [m]} \lambda_{i,j} P_i(L_j(X)).
$$
Then one of the following two statements holds: 
\begin{itemize}
\item $P_\Lambda \equiv 0.$
\item $P_\Lambda$ is non-constant and $\left| \E_{X\in (\F^n)^\ell} [\expo{P_\Lambda}] \right|< \epsilon$.
\end{itemize}
Furthermore $P_\Lambda \equiv 0$ if and only if for every $i \in [C]$, we have $(\lambda_{i,j})_{j \in [m]} \in \Phi_{d_i,k_i}(\cL)^\perp$ where $d_i,k_i$ are the degree and depth of $P_i$, respectively.  
\end{theorem}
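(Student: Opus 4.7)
The proof splits into the trivial direction of the ``if and only if'' and the substantive bound. For the easy direction, suppose that $(\lambda_{i,j})_j \in \Phi_{d_i, k_i}(\cL)^\perp$ for every $i$. By definition of $\Phi_{d_i, k_i}(\cL)$, specializing to the particular homogeneous polynomial $P_i$ itself (which has the matching degree and depth) gives $\sum_j \lambda_{i,j} P_i(L_j(X)) = 0$ for every $X \in (\F^n)^\ell$. Summing over $i$ yields $P_\Lambda \equiv 0$, so trivially $|\E[\expo{P_\Lambda}]| = 1$. To rule out the case of a nonzero constant in the other alternative, note that since each $P_i$ is homogeneous of positive degree, $P_i(\underline{0}) = 0$ (apply $P_i(0 \cdot x) = \sigma_0 P_i(x)$ with $\sigma_0 \equiv |0|^{d_i} \equiv 0 \pmod{p}$ since $d_i \geq 1$, as in \Cref{remark:homogeneity}). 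Hence $P_\Lambda(\underline{0}) = 0$, so any non-vanishing $P_\Lambda$ is automatically non-constant.

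The crux is establishing $|\E[\expo{P_\Lambda}]| < \epsilon$ whenever $P_\Lambda \not\equiv 0$. I would approach this via a Cauchy--Schwarz--Gowers reduction to the uniformity hypothesis on $\cB$. Grouping terms of $P_\Lambda$ by column, write
\[
\E_X \expo{P_\Lambda(X)} = \E_X \prod_{j=1}^m \expo{Q_j(L_j(X))}, \qquad Q_j := \sum_{i=1}^C \lambda_{i,j} P_i.
\]
The plan is to apply iterated Cauchy--Schwarz over the coordinates $x_1, \ldots, x_\ell$, following the standard recipe for bounding averages of products indexed by linear forms, to reduce to a single Gowers norm estimate of the shape
\[
|\E_X \expo{P_\Lambda(X)}|^{2^t} \le \|\expo{Q_{j^*}}\|_{U^{d}}
\]
for a chosen $j^* \in [m]$, an exponent $t$ depending only on $\cL$, and degree $d = \max_i \deg(\lambda_{i,j^*} P_i)$. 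Provided $Q_{j^*}$ is ``nontrivial'' in the sense required by \Cref{def:uniformpoly} --- i.e.\ $(\lambda_{i,j^*})_i$ is nonzero modulo the depths $(p^{k_i+1})_i$ --- the $\epsilon$-uniformity of $\cB$ immediately yields $\|\expo{Q_{j^*}}\|_{U^d} < \epsilon$, closing the argument.

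The main obstacle is selecting a valid $j^*$: we need both that the Cauchy--Schwarz reduction can legitimately isolate the variable $L_{j^*}(X)$ and that $Q_{j^*}$ is nontrivial in the above sense. The non-vanishing hypothesis $P_\Lambda \not\equiv 0$ only gives us, after unpacking, some row $i^*$ with $(\lambda_{i^*,j})_j \notin \Phi_{d_{i^*}, k_{i^*}}(\cL)^\perp$; that is, there exist a homogeneous $P^*$ of degree $d_{i^*}$ and depth $k_{i^*}$ and a witness $X^*$ with $\sum_j \lambda_{i^*,j} P^*(L_j(X^*)) \ne 0$. Bridging from such a witness to an index $j^*$ compatible with the Cauchy--Schwarz reduction is where I expect the proof to require real work. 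My strategy would be to stratify by $(d_i, k_i)$: apply top-degree additive derivatives to collapse $P_i$ of smaller degree, using multilinearity of $\partial P$ from \Cref{lem:derivativepolynomial} together with homogeneity (via \Cref{def:homogeneity} and \Cref{remark:homogeneity}) to separate the contributions within a fixed $(d, k)$ class. The definition of $\Phi_{d,k}(\cL)$ is tailor-made to capture exactly the linear identities forced inside one such class, so once the classes are decoupled, membership in $\Phi_{d_i,k_i}(\cL)^\perp$ becomes exactly the condition for $Q_j$-triviality after the Cauchy--Schwarz steps, and uniformity of $\cB$ finishes the bound.
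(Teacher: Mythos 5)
Your easy direction is fine (and the observation that $P_\Lambda(\underline{0})=0$ rules out nonzero constants), but the core of your argument rests on a reduction that does not go through. You propose to group $P_\Lambda$ by columns, $Q_j=\sum_i\lambda_{i,j}P_i$, and run iterated Cauchy--Schwarz over $x_1,\ldots,x_\ell$ to get $|\E\expo{P_\Lambda}|^{2^t}\le\|\expo{Q_{j^*}}\|_{U^d}$ with $d=\max_i\deg(\lambda_{i,j^*}P_i)$. The standard Cauchy--Schwarz machinery (\Cref{lem:gowerscount}) only ever produces a $U^{s+1}$ norm, where $s$ is the Cauchy--Schwarz complexity of $\cL$; when $s+1$ exceeds $\deg(Q_{j^*})$ --- which is the generic situation, and exactly the regime the theorem is about --- the bound $\|\expo{Q_{j^*}}\|_{U^{s+1}}=1$ is vacuous, since the exponential of any polynomial of degree $\le s$ has full $U^{s+1}$ norm. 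There is no ``standard recipe'' that lands on a $U^d$ norm with $d$ equal to the degree of the column; closing that gap is the content of the theorem, not an input to it. The column grouping also discards the structure needed for the ``furthermore'' clause: $P_\Lambda$ can vanish identically even when every column $Q_j$ is a nonzero combination of the $P_i$ (cancellation occurs \emph{across} linear forms within a single row, e.g.\ $P(x+y)-P(x)-P(y)\equiv 0$ for homogeneous degree-one $P$), so nontriviality of $(\lambda_{i,j^*})_i$ modulo the depths is not the right criterion.

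The paper's proof instead works row by row and never isolates a column. Using polynomial identities valid for all degree-$d$ polynomials (\Cref{claim:linearreform}, \Cref{claim:lc1}) together with homogeneity (to replace $P(cM(X))$ by $\sigma_c P(M(X))$), each row $\sum_j\lambda_{i,j}P_i(L_j(X))$ is rewritten canonically as $\sum_{M\in\LL_{d'}}\lambda'_{i,M}P_i(M(X))$ where every surviving form satisfies $|M|\le\deg(\lambda'_{i,M}P_i)$ (\Cref{claim:lc12}); this rewriting depends only on $(d_i,k_i)$, which is what makes ``all $\lambda'_{i,M}$ vanish'' equivalent to $(\lambda_{i,j})_j\in\Phi_{d_i,k_i}(\cL)^\perp$ and yields the ``furthermore'' in both directions. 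If some $\lambda'_{i,M}P_i\ne 0$, one picks a distinguished $M^*$ and takes exactly $d$ directional derivatives $D_{\alpha_e,y_e}$ with directions $\alpha_e$ chosen orthogonal to every other surviving $M$; the constraint $|M|\le\deg$ is what guarantees $d$ derivatives suffice. This collapses the average onto $\|\expo{\sum\lambda'_{i,M^*}P_i}\|_{U^d}^{2^d}$, a genuine nonzero combination of the factor polynomials at the correct Gowers order, to which $\epsilon$-uniformity applies; a final Cauchy--Schwarz step (\cite[Claim 3.4]{BFHHL13}) converts this back to a bound on the bias. Your closing paragraph gestures toward derivatives and stratification by $(d,k)$, but without the canonical $\LL_d$-reduction and the $|M|\le\deg$ normalization the derivative count cannot be matched to the degree, and the argument does not close.
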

We will present the proof of \Cref{thm:nearorthogonality} in \Cref{sec:proofofnearorthogonality}.

\begin{remark}
By \Cref{cor:rankvsuniformity} the assumption of $\epsilon$-uniformity in \Cref{thm:nearorthogonality} is satisfied for every factor $\cB$ of rank at least $r_{\ref{thm:taoziegler}}(d,\epsilon)$. However, we would like to point out that in \Cref{thm:nearorthogonality} by using  the assumption of $\epsilon$-uniformity  instead of the assumption of high rank, we are able to achieve the quantitative bound of $\epsilon$ on the bias of $P_\Lambda$.
\end{remark}

\begin{remark}
In \Cref{thm:nearorthogonality} in the second case where $P_\Lambda$ is non-constant, it is possible to deduce a more general statement  that  $\norm{ \expo{P_\Lambda}}_{U^t}^{2^t}< \epsilon$ for every $t \le \deg(P_\Lambda)$. Indeed assume that $P_\Lambda$ is non-constant, and consider the derivative
\begin{equation}\label{eq:partialplambda}
D_{Y_1} \ldots D_{Y_t} P_\Lambda(X)= \sum_{i\in [C], j\in [m]} \lambda_{i,j} \sum_{S\subseteq [t]} (-1)^{|S|} P_i(L_j(X+\sum_{r\in S} Y_r)),
\end{equation}
where $Y_i:=(y_{i,1},\ldots,y_{i,\ell}) \in (\F^n)^\ell$. Notice that for every choice of $j\in [m]$ and $S\subseteq [t]$, $L_j(X+ \sum_{r\in S} Y_r)$ is an application of a linear form on the vector $\left(x_1,\ldots,x_\ell, y_{1,1},\ldots,y_{1,\ell},\ldots,y_{t,1},\ldots, y_{t,\ell}\right)\in (\F^n)^{(t+1)\ell}$, and  since  by~\Cref{lem:derivativepolynomial} the polynomial $\partial P_\Lambda$ is nonzero, \Cref{thm:nearorthogonality} implies
$$
\norm{\expo{P_\Lambda}}_{U^t}^{2^t}= \E_{h_1,\ldots, h_t}\left[\expo{\partial_t P_\Lambda(h_1,\ldots,h_t)}\right]\leq \epsilon.
$$
\end{remark}

It is well-known that statements similar to that of \Cref{thm:nearorthogonality} imply ``near-equidistributions'' of the joint distribution of the polynomials applied to linear forms. Consider a highly uniform polynomial factor of degree $d > 0$,  defined by a tuple of homogeneous polynomials $P_1, \dots, P_C: \F^n \to \T$ with respective degrees $d_1, \dots, d_C$ and depths $k_1, \dots, k_C$, and let  $\cL=(L_1, \dots, L_m)$ be a collection of linear forms on $\ell$ variables. As we mentioned earlier, we are interested in the distribution of the random matrix
\begin{equation}
\label{eq:matrixPoly}
\left(
\begin{array}{cccc}
P_1(L_1(X)) & P_2(L_1(X)) & \ldots & P_C(L_1(X)) \\
P_1(L_2(X)) & P_2(L_2(X)) & \ldots & P_C(L_2(X)) \\
\vdots  & & & \vdots \\
P_1(L_m(X)) & P_2(L_m(X)) & \ldots & P_C(L_m(X)) \\
\end{array}
\right),
\end{equation}
where $X$ is the uniform random variable taking values in $(\F^n)^\ell$. Note that by the definition of consistency, for every $1 \le i \le C$, the $i$-th column of this matrix must belong to $\Phi_{d_i,k_i}(\cL)$. \Cref{equidist} below says that \Cref{eq:matrixPoly} is ``almost'' uniformly distributed over the set of all matrices satisfying this condition. The proof of \Cref{equidist} is standard and is identical to the proof of \cite[Theorem 3.10]{BFHHL13} with the only difference that it uses \Cref{thm:nearorthogonality} instead of the weaker near-orthogonality theorem of \cite{BFHHL13}.

\begin{theorem}[Near-equidistribution]\label{equidist}
Given $\eps > 0$, let $\cB$ be an $\epsilon$-uniform polynomial factor of
degree $d > 0$ and  complexity $C$, that is defined by a tuple of homogeneous
polynomials $P_1, \dots, P_C: \F^n
\to \T$ having respective degrees $d_1, \dots, d_C$ and
depths $k_1, \dots, k_C$.
Let $\cL=(L_1, \dots, L_m)$ be a collection of linear forms on $\ell$ variables.

Suppose $(\beta_{i,j})_{i \in [C], j \in [m]} \in \T^{C \times m}$  is such that $(\beta_{i,1},\ldots,\beta_{i,m}) \in \Phi_{d_i,k_i}(\cL)$ for every $i \in [C]$. Then
$$
\Pr_{X \in (\F^n)^{\ell}}\left[P_i(L_j(X)) = \beta_{i,j}~
 \forall i \in [C],j \in [m] \right] =
\frac{1}{K} \pm \eps,
$$
where $K= \prod_{i=1}^C |\Phi_{d_i,k_i}(\cL)|$.
\end{theorem}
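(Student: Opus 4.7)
The plan is standard discrete Fourier inversion on the finite abelian group $G := \prod_{i=1}^{C}\U_{k_i+1}^{m}$, in which the random matrix of \Cref{eq:matrixPoly} takes values, combined with \Cref{thm:nearorthogonality}. First I would express the indicator of the event $\{P_i(L_j(X))=\beta_{i,j}\ \forall i,j\}$ as an average over characters of $G$:
$$
\ind\!\left[P_i(L_j(X))=\beta_{i,j}\ \forall i,j\right] = \frac{1}{M}\sum_{\Lambda} \expo{\sum_{i,j}\lambda_{i,j}\bigl(P_i(L_j(X))-\beta_{i,j}\bigr)},
$$
where $\Lambda=(\lambda_{i,j})$ ranges over $\prod_i \{0,1,\ldots,p^{k_i+1}-1\}^{m}$ and $M=|G|=\prod_i p^{m(k_i+1)}$. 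Taking expectation over $X$ and recognizing the inner sum as the polynomial $P_\Lambda$ of \Cref{thm:nearorthogonality} yields
$$
\Pr_X\!\left[P_i(L_j(X))=\beta_{i,j}\ \forall i,j\right] = \frac{1}{M}\sum_{\Lambda}\expo{-\textstyle\sum_{i,j}\lambda_{i,j}\beta_{i,j}}\cdot \E_X\!\left[\expo{P_\Lambda(X)}\right].
$$

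Next I would split this sum using \Cref{thm:nearorthogonality}. For each $\Lambda$, either $P_\Lambda\equiv 0$, in which case $\E_X[\expo{P_\Lambda}]=1$, or $|\E_X[\expo{P_\Lambda}]|<\epsilon$. The latter case contributes at most $\epsilon$ in total to the right-hand side, since there are at most $M$ such terms each of magnitude bounded by $\epsilon/M$. For the main terms, namely those with $P_\Lambda\equiv 0$, the ``furthermore'' clause of \Cref{thm:nearorthogonality} tells us that each row $(\lambda_{i,j})_{j\in[m]}$ must lie in $\Phi_{d_i,k_i}(\cL)^\perp$. Combined with the consistency hypothesis $(\beta_{i,1},\ldots,\beta_{i,m})\in\Phi_{d_i,k_i}(\cL)$, this forces $\sum_{j}\lambda_{i,j}\beta_{i,j}=0$ for every $i$, so every such character value equals $1$.

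It remains to count: the main term is exactly $\#\{\Lambda:P_\Lambda\equiv 0\}/M$. Since each $\Phi_{d_i,k_i}(\cL)$ is a subgroup of $\U_{k_i+1}^{m}$, Pontryagin duality says that its annihilator inside $(\Z/p^{k_i+1}\Z)^m$ has cardinality $p^{m(k_i+1)}/|\Phi_{d_i,k_i}(\cL)|$; taking the product over $i\in[C]$ produces exactly $1/K$. Combining this with the error bound then proves the theorem. There is no substantive obstacle here — the argument is essentially character orthogonality on $G$ — and the only care required is in the bookkeeping when identifying characters of $\U_{k_i+1}^{m}$ with integer tuples modulo $p^{k_i+1}$, and in verifying that the definition of $\Phi_{d_i,k_i}(\cL)^\perp$ dualizes correctly to the annihilator of $\Phi_{d_i,k_i}(\cL)$ in this finite group, which is why the ``furthermore'' part of \Cref{thm:nearorthogonality} is indispensable.
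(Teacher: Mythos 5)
Your proposal is correct and follows essentially the same route as the paper's proof: Fourier inversion of the indicator over $\prod_{i}\U_{k_i+1}^m$, an appeal to \Cref{thm:nearorthogonality} (including its ``furthermore'' clause) to identify the main terms with those $\Lambda$ whose rows lie in $\Phi_{d_i,k_i}(\cL)^\perp$ and to bound the remaining terms by $\epsilon$, and the duality count $|\Phi_{d_i,k_i}(\cL)^\perp\cap[0,p^{k_i+1}-1]^m|\cdot|\Phi_{d_i,k_i}(\cL)|=p^{m(k_i+1)}$ to produce the main term $1/K$. No gaps.
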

\begin{proof}
We have
\begin{align*}
&\Pr[P_i(L_j(X)) = \beta_{i,j}~
\forall i \in [C], \forall j \in [m]]= \E\left[ \prod_{i, j}\frac{1}{p^{k_i+1}} \sum_{
    \lambda_{i, j}=0}^{p^{k_i+1}-1} \expo{\lambda_{i, j}
    \big(P_{i}(L_j(X)) - \beta_{i,j}\big)}\right]\\
&= \left(\prod_{i \in [C]}p^{-(k_i+1)}\right)^m \sum_{(\lambda_{i,j})}
\expo{-\sum_{i,j} \lambda_{i,j}\beta_{i,j}}
\E \left[\expo{\sum_{i \in [C],j \in [m]} \lambda_{i,j}P_{i}(L_j(X))}\right],
\end{align*}
where the outer sum is over $(\lambda_{i,j})_{i \in [C], j \in [m]}$ with $\lambda_{i,j} \in [0,p^{k_i + 1}-1]$. Let $\Lambda_i=\Phi_{d_i,k_i}(\cL)^\perp \cap [0,p^{k+1}-1]^m$, and note that $|\Lambda_i||\Phi_{d_i,k_i}|= p^{m(k_i+1)}$.   Since $(\beta_{i,1},\ldots,\beta_{i,m}) \in \Phi_{d_i,k_i}(\cL)$ for every $i \in [C]$, it follows that  $\sum_{i,j} \lambda_{i,j}\beta_{i,j}=0$ if $(\lambda_{i,1},\ldots,\lambda_{i,m}) \in \Lambda_i$ for all $i \in [C]$.  If the latter holds, then the expected value in the above expression is $0$, and otherwise  by  \cref{thm:nearorthogonality}, it is bounded by $\epsilon$. Hence the above expression can be approximated by
$$ p^{-m \sum_{i=1}^C(k_i+1)} \cdot \left(\prod_{i=1}^C |\Lambda_i|~ \pm~
  \eps p^{m \sum_{i=1}^C(k_i+1)}\right)= \frac{1}{K}\pm \epsilon.$$
\end{proof}

\subsection{On a Theorem of Gowers and Wolf}\label{sec:gowerswolf}

Let $A$ be a subset of $\F^n$ with the indicator function $\ind_A:\F^n \to \{0,1\}$. As mentioned in the introduction, \Cref{eq:linearAvgGeneralSets} equals the probability that $L_1(X),\ldots,L_m(X)$ all fall in $A$, where $X \in (\F^n)^k$ is chosen uniformly at random. Roughly speaking, we say $A \subseteq \F^n$ is \emph{pseudorandom} with regards to $\mathcal{L}$ if
$$
\E_{X} \left[ \prod_{i=1}^m \ind_A(L_i(X)) \right] \approx \left( \frac{|A|}{p^n} \right)^m;
$$
That is if the probability that all $L_1(X),\ldots,L_m(X)$ fall in $A$ is close to what we would expect if $A$ was a random subset of $\F^n$ of cardinality $|A|$. Let $\alpha := |A|/p^n$ be the density of $A$, and define $f:=\ind_A - \alpha$. We have
$$
\E_{X} \left[ \prod_{i=1}^m \ind_A(L_i(X)) \right]=\E_{X} \left[ \prod_{i=1}^m \left(\alpha+ f(L_i(X))\right) \right]=\alpha^m + \sum_{S \subseteq [m], S \ne \emptyset} \alpha^{m-|S|} \E_{X} \left[\prod_{i\in S} f(L_i(X)) \right].
$$
Therefore, a sufficient condition for $A$ to be pseudorandom with regards to $\mathcal{L}$ is that $\E_{X} \left[\prod_{i\in S} f(L_i(X)) \right]$ is negligible for all nonempty subsets $S \subseteq [m]$. Green and Tao~\cite{MR2680398} showed that a sufficient condition for this to occur is that $\|f\|_{U^{s+1}}$ is small enough, where $s$ is the {\em Cauchy-Schwarz complexity} of the system of linear forms.

\begin{definition}[Cauchy-Schwarz complexity~\cite{MR2680398}]
\label{dfn:cauchy-schwarz-complexity}
Let $\mathcal{L}=\{L_1,\ldots,L_m\}$ be a system of linear forms. The {\em Cauchy-Schwarz complexity} of $\mathcal{L}$ is the minimal $s$ such that the following holds. For every $1 \le i \le m$, we can partition $\{L_j\}_{j \in [m] \setminus \{i\}}$ into $s+1$ subsets, such that $L_i$ does not belong to  the linear span of any of the subsets.
\end{definition}
The reason for the term {\em Cauchy-Schwarz complexity} is the following lemma due to Green and Tao~\cite{MR2680398} whose proof is based on a clever iterative application of the Cauchy-Schwarz inequality.

\begin{lemma}[{\cite{MR2680398}, {See also~\cite[Theorem 2.3]{MR2578471}}}]
\label{lem:gowerscount}
Let $f_1,\ldots,f_m:\F \to \D$. Let $\mathcal{L}=\{L_1,\ldots,L_m\}$ be a system of $m$ linear forms in $\ell$ variables of Cauchy-Schwarz complexity $s$. Then
$$
\left| \E_{X \in (\F^n)^{\ell}} \left[ \prod_{i=1}^{m} f_i (L_i(X)) \right]\right| \le \min_{1 \le i \le m} \|f_i\|_{U^{s+1}}.
$$
\end{lemma}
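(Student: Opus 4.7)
The plan is to run the standard iterated Cauchy-Schwarz argument implicit in the work of Green-Tao \cite{MR2680398}. Let $i^*$ be an index achieving the minimum on the right-hand side; without loss of generality $i^* = 1$. By the Cauchy-Schwarz complexity hypothesis, partition $\{L_j : j \in [m],\, j \neq 1\}$ into $s+1$ classes $\cL_0, \ldots, \cL_s$ with $L_1 \notin \mathrm{span}_\F(\cL_t)$ for every $t$. Linear-algebra duality then yields, for each $t$, a vector $v_t \in \F^\ell$ satisfying $L(v_t) = 0$ for all $L \in \cL_t$ and (after rescaling) $L_1(v_t) = 1$. Finally set $G_t(X) \defeq \prod_{i : L_i \in \cL_t} f_i(L_i(X))$, so that $\prod_{i=1}^{m} f_i(L_i(X)) = f_1(L_1(X)) \prod_{t=0}^{s} G_t(X)$.

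The core of the proof is a single Cauchy-Schwarz step, applied once per class. To eliminate class $\cL_t$, split the average over $X$ along the direction $v_t$: writing $X = X' + y v_t$ with $y \in \F^n$, the invariance $L(v_t) = 0$ for $L \in \cL_t$ makes $G_t(X) = G_t(X')$, while $L_1(X) = L_1(X') + y$ and $L_i(X) = L_i(X') + y L_i(v_t)$ for the surviving forms. Pulling $G_t(X')$ outside the $y$-average and applying Cauchy-Schwarz in $X'$ (using $|G_t| \le 1$), then expanding the square by introducing a new direction $h_t \in \F^n$, gives
$$|\E_X[\cdot]|^2 \,\le\, \E_{X, h_t}\!\Big[ f_1(L_1(X))\, \overline{f_1(L_1(X)+h_t)} \!\!\prod_{t' \neq t} G_{t'}(X)\, \overline{G_{t'}(X + h_t v_t)} \Big].$$
The class $\cL_t$ has been absorbed; each surviving $G_{t'}$ has been replaced by the product of itself with a $v_t$-shifted conjugate copy; and $f_1$ has acquired one multiplicative derivative in direction $h_t$. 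Crucially, for any remaining step $t'' \neq t$, the lifted direction $(v_{t''}, 0) \in \F^{\ell+1}$ still annihilates both copies of every $L_i \in \cL_{t''}$ (since $L_i(v_{t''}) = 0$) and still evaluates $L_1$ to $1$, so the procedure iterates cleanly.

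Performing $s+1$ rounds of this step for $t = 0, 1, \ldots, s$ eliminates every $G_t$. The $f_1$-factor picks up one multiplicative derivative per round, so after all $s+1$ rounds we obtain
$$|\E_X F(X)|^{2^{s+1}} \,\le\, \E_{X, h_0, \ldots, h_s}\!\Big[\,\Delta_{h_0} \Delta_{h_1} \cdots \Delta_{h_s} f_1\bigl(L_1(X)\bigr)\,\Big] \,=\, \|f_1\|_{U^{s+1}}^{2^{s+1}},$$
where the equality is \Cref{gowers} together with the fact that, since $L_1$ is nonzero, $L_1(X)$ is uniformly distributed in $\F^n$ when $X$ is uniform in $(\F^n)^\ell$. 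Taking $2^{s+1}$-th roots and then the minimum over $i \in [m]$ yields the lemma. The main obstacle is purely bookkeeping: one has to check round by round that the annihilating directions $v_t$ lift correctly to the enlarged variable space, that the surviving linear forms remain a consistent linear system on $(\F^n)^{\ell+t+1}$, and that the accumulated multiplicative derivatives on $f_1$ arrange themselves into the exact $\Delta_{h_0} \cdots \Delta_{h_s}$ pattern needed to recognize the Gowers $U^{s+1}$-norm.
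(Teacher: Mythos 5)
Your argument is correct: it is the standard iterated Cauchy--Schwarz proof of the generalized von Neumann theorem, which is exactly the argument of Green--Tao \cite{MR2680398} (and \cite[Theorem 2.3]{MR2578471}) that the paper cites; the paper itself states \Cref{lem:gowerscount} without proof. The duality step producing the annihilating directions $v_t$, the per-class Cauchy--Schwarz elimination, and the lifting of the directions to the enlarged variable space all check out, so the remaining work is indeed only the bookkeeping you describe.
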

Note that the Cauchy-Schwarz complexity of any system of $m$ linear forms in which any two linear forms are linearly independent (i.e. one is not a multiple of the other) is at most $m-2$, since we can always partition $\{L_j\}_{j \in [m] \setminus \{i\}}$ into the $m-1$ singleton subsets.

The Cauchy-Schwarz complexity of $\mathcal{L}$ gives an upper bound on $s$, such that if $\|f\|_{U^{s+1}}$ is small enough for some function $f:\F^n \rightarrow \mathbb{D}$, then $f$ is pseudorandom with regards to $\mathcal{L}$. Gowers and Wolf~\cite{MR2578471} defined the {\em true complexity} of a system of linear forms as the minimal $s$ such that the above condition holds for all $f:\F^n \rightarrow \mathbb{D}$.
\begin{definition}[True complexity~\cite{MR2578471}]
\label{def:trueComplexity}
Let $\mathcal{L}=\{L_1,\ldots,L_m\}$ be a system of linear forms over $\F$.
The true complexity of $\mathcal{L}$ is the smallest $d \in \N$ with the following property. For every $\eps>0$,
there exists  $\delta>0$ such that if $f : \F^n \rightarrow \D$ is any function
with $\|f\|_{U^{d+1}} \le \delta$, then
$$\left| \E_{X \in (\F^n)^k} \left[ \prod_{i=1}^{m} f (L_i(X)) \right] \right|\le  \eps.$$
\end{definition}
An obvious bound on the true complexity is the Cauchy-Schwarz complexity of the system. However, there are cases where this is not tight. Gowers and Wolf conjectured that the true complexity of a system of linear forms can be characterized by a simple linear algebraic condition. Namely, that it is equal to the smallest $d \ge 1$ such that $L_1^{d+1},\ldots,L_m^{d+1}$ are linearly independent where the $d$-th tensor power of a linear form $L$ is defined as
$$
L^d = \left(\prod_{j=1}^d \lambda_{i_j}: i_1,\ldots,i_d \in [k]\right) \in \F^{k^d}.
$$

Later in~\cite[Theorem 6.1]{MR2773103} they verified their conjecture in the case where $|\F|$ is sufficiently large; more precisely when $|\F|$ is at least the Cauchy-Schwarz complexity of the system of linear form. In this paper we verify the Gowers-Wolf conjecture in full generality by proving the following stronger theorem.

\begin{theorem}\label{thm:gowerswolf}
Let $\mathcal{L}=\{L_1,\ldots,L_m\}$ be a system of linear forms. Assume that $L_1^{d+1}$ is not in the linear span of $L_2^{d+1},\ldots,L_m^{d+1}$. For every $\eps>0$, there exists $\delta>0$ such that for any collection of functions $f_1,\ldots,f_m:\F^n \to \D$ with $\|f_1\|_{U^{d+1}} \le \delta$, we have
$$
\left| \E_{X \in (\F^n)^k} \left[\prod_{i=1}^m f_i(L_i(X)) \right] \right| \le \eps.
$$
\end{theorem}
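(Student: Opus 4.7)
The proof strategy is higher-order Fourier expansion combined with the near-orthogonality theorem. First a trivial reduction: if $d$ is at least the Cauchy-Schwarz complexity $s$ of $\cL$, then by monotonicity of Gowers norms $\|f_1\|_{U^{s+1}} \le \|f_1\|_{U^{d+1}} \le \delta$ and \Cref{lem:gowerscount} concludes directly, so assume $d < s$. Apply \Cref{cor:homogeneous-strong-decomposition} to $f_1,\ldots,f_m$ with degree parameter $s$, error $\delta_1$, and a sufficiently fast growth function $r$, obtaining $f_i = g_i + h_i$ with $g_i = \E[f_i \mid \cB]$ for a common $r$-regular homogeneous polynomial factor $\cB = (P_1,\ldots,P_C)$ of degree at most $s$, and $\|h_i\|_{U^{s+1}} \le \delta_1$. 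Expanding $\prod_i (g_i + h_i)$ and applying \Cref{lem:gowerscount} to each term that contains some $h_i$ yields
\[
\E_X \prod_{i=1}^m f_i(L_i(X)) = \E_X \prod_{i=1}^m g_i(L_i(X)) \pm O(m \delta_1),
\]
and by monotonicity and the triangle inequality $\|g_1\|_{U^{d+1}} \le \|f_1\|_{U^{d+1}} + \|h_1\|_{U^{d+1}} \le \delta + \delta_1$.

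Next, write each $g_i = \Gamma_i(P_1,\ldots,P_C)$ and Fourier-expand $\Gamma_i$ over $\Omega = \prod_j \Z_{p^{k_j+1}}$, giving $g_i = \sum_{\alpha^{(i)} \in \Omega} \hat\Gamma_i(\alpha^{(i)}) \expo{\sum_j \alpha_j^{(i)} P_j}$. Substituting into the product and applying \Cref{thm:nearorthogonality} (with $r$ chosen so that $\cB$ is $\epsilon_1$-uniform for a small parameter $\epsilon_1$), each tuple $\bar\alpha = (\alpha^{(1)},\ldots,\alpha^{(m)})$ whose associated polynomial $P_\Lambda(X) = \sum_{i,j} \alpha_j^{(i)} P_j(L_i(X))$ is nonzero contributes at most $\epsilon_1$ in absolute value, and together the non-zero terms contribute at most $\epsilon_1 |\Omega|^m$. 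The remaining ``zero'' contribution is
\[
T = \sum_{\bar\alpha:\; \forall j,\, (\alpha_j^{(i)})_i \in \Phi_{d_j,k_j}(\cL)^\perp} \prod_{i=1}^m \hat\Gamma_i(\alpha^{(i)}),
\]
where the exact description of $P_\Lambda \equiv 0$ comes from the second half of \Cref{thm:nearorthogonality}.

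The heart of the argument is a linear-algebraic consequence of the hypothesis that $L_1^{d+1}$ is independent of $L_2^{d+1},\ldots,L_m^{d+1}$: for every $j$ with $d_j \ge d+1$ and every $(\lambda_i)_i \in \Phi_{d_j,k_j}(\cL)^\perp$, the coefficient $\lambda_1$ is divisible by a power of $p$ large enough that $\lambda_1 P_j$ has degree at most $d$ (possibly vanishing). Granting this, for every $\alpha^{(1)}$ contributing to $T$ the polynomial $Q_{\alpha^{(1)}} := \sum_j \alpha_j^{(1)} P_j$ has degree at most $d$. Since multiplication by $\expo{-Q}$ with $\deg Q \le d$ preserves the $U^{d+1}$-norm (the $(d{+}1)$-fold iterated derivative of $Q$ vanishes), we get $|\langle g_1, \expo{Q_{\alpha^{(1)}}}\rangle| \le \|g_1\|_{U^{d+1}} \le \delta + \delta_1$; and \Cref{equidist} applied to the single linear form $L(y) = y$ gives $\hat\Gamma_1(\alpha^{(1)}) = \langle g_1, \expo{Q_{\alpha^{(1)}}}\rangle \pm O(\epsilon_1)$. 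Thus $|\hat\Gamma_1(\alpha^{(1)})| \le \delta + \delta_1 + O(\epsilon_1)$ for every $\alpha^{(1)}$ in $T$; together with $|\hat\Gamma_i| \le 1$ for $i \ge 2$ and the crude bound $|\Omega|^m$ on the number of tuples, this yields $|T| \le |\Omega|^m(\delta + \delta_1 + O(\epsilon_1))$. A routine choice of parameters (fix $\epsilon$; choose $\epsilon_1, \delta_1$; choose $r$ so that the resulting complexity $C$ is still compatible with the imposed smallness; finally take $\delta$ small) completes the proof. The main obstacle is the linear-algebraic claim above: propagating the degree-$(d{+}1)$ independence of the $L_i$'s to divisibility constraints on $\lambda_1$ in $\Phi_{d_j,k_j}(\cL)^\perp$ for all degrees $d_j \ge d+1$ and all depths $k_j \ge 0$. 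This requires a careful treatment in the non-classical setting, exploiting the homogeneity guaranteed by \Cref{thm:homogeneous-basis} and the structural description of homogeneous non-classical polynomials in terms of their ``top'' classical parts $p^{k_j} P_j$.
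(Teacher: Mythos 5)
Your skeleton is the paper's: reduce to finite Cauchy--Schwarz complexity and $d<s$, decompose via \Cref{cor:homogeneous-strong-decomposition}, pass from $f_i$ to $g_i$ using \Cref{lem:gowerscount}, Fourier-expand the $\Gamma_i$ over $\Omega$, and invoke \Cref{thm:nearorthogonality} to discard every tuple whose $P_\Lambda$ is nonzero. Your bookkeeping for the surviving terms --- isolating the tuples with $P_\Lambda\equiv 0$ and showing $\widehat{\Gamma}_1(\alpha^{(1)})$ is then small --- is an equivalent reorganization of the paper's split according to whether $\deg(P_{\Lambda_1})\le d$, and your bound on $\widehat{\Gamma}_1(\alpha^{(1)})$ via invariance of $\|\cdot\|_{U^{d+1}}$ under multiplication by $\expo{Q}$ with $\deg Q\le d$ is essentially the paper's (the paper avoids the appeal to \Cref{equidist} by extracting $\widehat{\Gamma}_1(\Lambda)$ directly from the expansion and killing the cross terms with uniformity). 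Two minor omissions: you never reduce to pairwise linearly independent forms (if $L_\ell=cL_j$ the Cauchy--Schwarz complexity is infinite; the paper first merges $f_j$ and $f_\ell(c\,\cdot)$), and nothing in your write-up would fail outright on this account --- it is simply unaddressed.

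The genuine gap is the step you explicitly defer, which is the actual content of the theorem. The claim that $(\lambda_i)_i\in\Phi_{d_j,k_j}(\cL)^\perp$ with $d_j\ge d+1$ forces $\deg(\lambda_1 P_j)\le d$ is precisely the content of \Cref{lemma:lineardependence} (via the characterization of $P_\Lambda\equiv 0$ in \Cref{thm:nearorthogonality}, your formulation and the paper's are contrapositives of one another), and it is not a routine piece of linear algebra: its proof is where all of the low-characteristic machinery of the paper is consumed. Concretely, one must (i) use \Cref{claim:lc1} and \Cref{claim:lc12} to rewrite $\sum_i\lambda_i P(L_i(X))$ over the canonical forms $\LL_{d^*}$ with leading coefficient $1$; (ii) use the homogeneity relation $\sigma_c\equiv|c|^{\deg P}\bmod p$ of \Cref{remark:homogeneity} to transfer the resulting identity from the non-classical $P_{j^*}$ to an arbitrary \emph{classical} homogeneous $Q$ of the same degree (\Cref{claim:replacePwithQ}), tracking which coefficients survive modulo $p$; (iii) invoke \Cref{pro:nearorthog-niceform} to certify that the surviving degree-$d^*$ part is a genuinely nonzero polynomial, so that the transfer is legitimate; and only then does the choice $Q(x)=x(1)\cdots x(d^*)$ convert the identity into $\sum_i\lambda_i L_i^{d^*}=0$ over $\F$ and contradict the tensor-power independence. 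Your closing sentence correctly guesses that homogeneity and the classical top parts are where the difficulty lies, but without carrying out this argument the proof is incomplete at its central point.
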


\Cref{thm:gowerswolf} was conjectured in~\cite{MR2773103}, and left open even in the case of large $|\F|$. In \cite{HL11b}, a partial near-orthogonality result is proved and used to prove \Cref{thm:gowerswolf} in the case where $|\F|$ is greater or equal to the Cauchy-Schwarz complexity of the system of linear form. In this paper, our full near-orthogonality result allows us to establish this theorem in its full generality. We will present the proof of \Cref{thm:gowerswolf} in \Cref{sec:proofGowersWolf}. The following corollary to \Cref{thm:gowerswolf} is very useful when combined with the decomposition theorems such as \Cref{thm:strongdecomposition}.

\begin{corollary}\label{cor:gowerswolf}
Let $\mathcal{L}=\{L_1,\ldots,L_m\}$ be a system of linear forms. Assume that $L_1^{d+1}, \ldots,L_m^{d+1}$ are linearly independent. For every $\eps>0$, there exists $\delta>0$ such that for any functions $f_1,\ldots,f_m, g_1,\ldots, g_m:\F^n \to \D$ with $\|f_i-g_i\|_{U^{d+1}} \le \delta$, we have
$$
\left|\E_{X} \left[\prod_{i=1}^m f_i(L_i(X))\right] - \E_{X} \left[\prod_{i=1}^m g_i(L_i(X))\right]\right| \le \eps,
$$
\end{corollary}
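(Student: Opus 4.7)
The plan is to reduce the statement to a purely black-box application of \Cref{thm:gowerswolf} via a standard hybrid (telescoping) decomposition over the coordinates. Concretely, I would write the difference of the two products as a sum of $m$ terms,
$$
\prod_{i=1}^m f_i(L_i(X)) - \prod_{i=1}^m g_i(L_i(X)) \;=\; \sum_{j=1}^m \Bigl(\prod_{i<j} g_i(L_i(X))\Bigr)\, (f_j-g_j)(L_j(X))\, \Bigl(\prod_{i>j} f_i(L_i(X))\Bigr),
$$
so that in the $j$-th summand the factor $(f_j-g_j)(L_j(X))$ is isolated at position $j$, while the remaining factors are $\D$-valued functions (the $g_i$'s for $i<j$ and the $f_i$'s for $i>j$). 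Taking expectations and using the triangle inequality, it suffices to show that each of the $m$ resulting expectations has absolute value at most $\eps/m$.

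For fixed $j$, the $j$-th expectation has exactly the form appearing in the hypothesis of \Cref{thm:gowerswolf}, with the underlying system of linear forms still being $\cL$ (possibly after a cosmetic relabeling that puts $L_j$ first), and with $(f_j-g_j)$ playing the role of the distinguished function $f_1$. The key structural point is that the hypothesis of the corollary, namely that $L_1^{d+1},\ldots,L_m^{d+1}$ are linearly independent, implies in particular that $L_j^{d+1}$ is not in the linear span of $\{L_i^{d+1}\}_{i\neq j}$, which is precisely the non-degeneracy condition required by \Cref{thm:gowerswolf}. Hence \Cref{thm:gowerswolf} supplies, for any target precision $\eps'>0$, some $\delta'(\eps')>0$ such that $\|f_j-g_j\|_{U^{d+1}}\leq \delta'(\eps')$ forces the $j$-th expectation to be at most $\eps'$ in absolute value.

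There is one mild bookkeeping nuisance: \Cref{thm:gowerswolf} is stated for $\D$-valued functions, while $f_j - g_j$ a priori takes values in the disk of radius $2$. I would absorb this by applying the theorem to the rescaled function $(f_j-g_j)/2 \in \D$, noting that $\|(f_j-g_j)/2\|_{U^{d+1}} = \tfrac{1}{2}\|f_j-g_j\|_{U^{d+1}}$, and then multiplying the resulting bound by $2$. Putting everything together, one chooses $\delta := 2\,\delta'(\eps/(2m))$, so that $\|f_j - g_j\|_{U^{d+1}} \leq \delta$ implies $\|(f_j-g_j)/2\|_{U^{d+1}}\leq \delta'(\eps/(2m))$, hence each of the $m$ terms contributes at most $\eps/m$ in absolute value, and summing gives the desired bound $\eps$.

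I do not anticipate any substantial obstacle in this argument: the non-trivial content is entirely located in \Cref{thm:gowerswolf}, and the present corollary is essentially a routine multilinear-style upgrade of the single-function statement to the multi-function setting via hybridization. The only care required is the (trivial) tracking of the factor-of-$2$ normalization and the factor-of-$m$ loss inherent to the telescoping sum, both of which are absorbed into the choice of $\delta$.
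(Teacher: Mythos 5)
Your proof is correct and follows essentially the same telescoping (hybrid) decomposition and black-box application of \Cref{thm:gowerswolf} as the paper's own proof. You are in fact slightly more careful than the paper: you explicitly handle the fact that $f_j-g_j$ takes values in the disk of radius $2$ rather than in $\D$ by rescaling by $1/2$, a normalization issue the paper's proof silently glosses over.
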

\begin{proof}
Choosing $\delta=\delta(\epsilon')$ as in \Cref{thm:gowerswolf} for $\epsilon':=\epsilon/m$, we have
\begin{align*}
\left| \E_X \left[\prod_{i=1}^m f_i(L_i(X)) \right] -  \E_X \left[\prod_{i=1}^m g_i(L_i(X)) \right]\right|
&=
\left| \sum_{i=1}^m \E_X\left[ (f_i-g_i)(L_i(X)) \cdot \prod_{j=1}^{i-1} g_j(L_j(X))\cdot \prod_{j=i+1}^m
 f_j(L_j(X))\right]\right| \\
&\leq
\sum_{i=1}^m \left|\E_X\left[ (f_i-g_i)(L_i(X)) \cdot \prod_{j=1}^{i-1} g_j(L_j(X))\cdot \prod_{j=i+1}^m
 f_j(L_j(X))\right]\right| \\
&\leq
m\cdot \delta \leq \eps,
\end{align*}
where the second inequality follows from \Cref{thm:gowerswolf} since $\norm{f_i-g_i}_{U^{d+1}}\leq \delta$ and $L_i^{d+1}$ is not in the linear span of $\{L_j^{d+1}\}_{j\in [m]\backslash \{i\}}$.
\end{proof}

\section{Main Proofs}
In this section we will present the proofs of \Cref{thm:homogeneous-basis}, \Cref{thm:nearorthogonality}, and \Cref{thm:gowerswolf}.

\subsection{Homogeneity: Proof of \Cref{thm:homogeneous-basis}}\label{sec:proofhomogeneous-basis}
\restate{\Cref{thm:homogeneous-basis}}{
There is a basis for $\poly(\F^n\rightarrow \T)$ consisting only of homogeneous multivariate polynomials.}

To simplify the notation, in this section we will omit writing ``$\mathrm{mod}\; 1$'' in the description of the defined non-classical polynomials. We start by proving the following simple observation.

\begin{claim}\label{claim:multiplicativeconstant}
Let $P:\F \rightarrow \T$ be a univariate polynomial of degree $d$. Then for every $c\in \F\backslash \{0\}$,
$$
\deg\left(P(cx)-|c|^d  P(x) \right) < d.
$$
\end{claim}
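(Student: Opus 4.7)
The plan is to show that $Q(x) := P(cx) - |c|^d P(x)$ has degree $<d$ by verifying that the $d$-fold additive derivative $D_{y_1}\cdots D_{y_d} Q$ vanishes identically. This suffices because both $P$ and $x\mapsto P(cx)$ are of degree at most $d$, so $\deg(Q)\le d$, and under \Cref{poly} the bound $\deg(Q)<d$ is equivalent to the vanishing of the $d$-th iterated additive derivative.

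First I would record the elementary scaling rule $D_h[P(cx)] = (D_{ch}P)(cx)$, which is immediate from the definition of $D_h$. Iterating $d$ times yields
\[
D_{y_1}\cdots D_{y_d}[P(cx)] \;=\; (D_{cy_1}\cdots D_{cy_d}P)(cx).
\]
Since $P$ has degree $d$, this $d$-th derivative is independent of $x$ (as noted in the footnote of \Cref{dfn:derivativepolynomial}) and therefore equals $\partial P(cy_1,\ldots,cy_d)$. Likewise $D_{y_1}\cdots D_{y_d}[|c|^d P(x)] = |c|^d\,\partial P(y_1,\ldots,y_d)$. Hence the claim reduces to the identity
\[
\partial P(cy_1,\ldots,cy_d) \;=\; |c|^d\,\partial P(y_1,\ldots,y_d).
\]

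For this last step I would invoke \Cref{lem:derivativepoly}(i), which states that $\partial P$ is additive in each argument. Since $cy_i$ is the sum of $|c|$ copies of $y_i$ in $\F$, additivity in the $i$-th slot gives $\partial P(\ldots,cy_i,\ldots) = |c|\cdot \partial P(\ldots,y_i,\ldots)$; applying this in all $d$ coordinates in succession contributes precisely the factor $|c|^d$.

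No step should pose a serious obstacle: the essential content, once the problem is recast in terms of the $d$-th iterated derivative, is the transformation rule of the classical multilinear form $\partial P$ under the diagonal scaling $y_i\mapsto cy_i$, which is built into \Cref{lem:derivativepoly}. Note also that the univariate hypothesis on $P$ plays no special role in this derivation; the same argument would yield the analogous statement for polynomials $P:\F^n\to\T$.
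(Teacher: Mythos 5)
Your proof is correct, but it takes a genuinely different route from the paper's. The paper first reduces, via the structure theorem (\Cref{struct}), to a single monomial $q(x)=\frac{|x|^{s}}{p^{k+1}}$ with $s+k(p-1)=d$, and then observes that $|cx|^{s}-|c|^{d}|x|^{s}$ is divisible by $p$ (Fermat's little theorem, since $d\equiv s \bmod (p-1)$), so that $q(cx)-|c|^{d}q(x)$ takes values in the smaller subgroup $\frac{1}{p^{k}}\Z/\Z$ and therefore has degree at most $k(p-1)<d$. That is a ``depth drops by one'' argument, and it yields a quantitative degree bound beyond the bare inequality $<d$. You instead work coordinate-free: you reduce the claim to the transformation rule $\partial P(cy_1,\ldots,cy_d)=|c|^{d}\,\partial P(y_1,\ldots,y_d)$ and deduce it from the additivity of $\partial P$ in each slot (\Cref{lem:derivativepoly}(i)), using that $cy_i$ is a sum of $|c|$ copies of $y_i$ in $\F_p$. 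All the individual steps check out: $D_h[P(c\,\cdot)]=(D_{ch}P)(c\,\cdot)$, the $d$-th derivative of a degree-$d$ polynomial is constant in the base point, and $D_h$ commutes with integer multiples in $\T$. Your approach buys generality --- it applies verbatim to multivariate $P$, which the paper only asserts in a remark without proof --- at the cost of not exhibiting the finer structural fact that the difference actually lands in $\U_k$. (Amusingly, the source contains a commented-out earlier proof of this claim along essentially your lines, which the authors replaced with the monomial computation.)
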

\begin{proof}
 By \cref{struct} it suffices to prove the claim for a monomial $q(x):=\frac{|x|^{s}}{p^{k+1}}$ with $k(p-1) + s = d$. Note that $q(cx)-|c|^dq(x)$ takes values in $\frac{1}{p^k} \mathbb{Z}/\mathbb{Z}$ as $|c|^{s}-|c|^{d}$ is divisible by $p$. Hence
\begin{equation}
\label{eq:degreedrop}
\deg\left(q (cx)-|c|^d  q (x) \right) \le (p-1)(k-1)<d.
\end{equation}
\end{proof}

\begin{remark}
It is not  difficult to show that the above claim holds for any multivariate polynomial $P:\F^n \rightarrow \T$. However, since the univariate case suffices for our purpose, we do not prove the general case.
\end{remark}

\ignore{
\begin{proof}
Notice that the claim is trivial for classical polynomials, since in this case, if $R$ denotes the homogeneous degree-$d$ part of $P$, then $R(cx)-|c|^dR(x)=0$. We prove the statement for non-classical polynomials. Let $Q(x) := P(cx)$, and note $\deg(Q)=d$. We will inspect the derivative polynomial of $Q$. Recall from \Cref{dfn:derivativepolynomial} and (\Cref{lem:derivativepolynomial} that the derivative polynomial of $Q$,
$$
\partial Q(y_1,\ldots,y_d)= D_{y_1}\cdots D_{y_d}Q(0),
$$
is a degree-$d$ classical homogeneous multi-linear polynomial which is invariant under permutations of $(y_1,\ldots,y_d)$. In particular
\begin{align*}
|c|^{-d}\partial Q(y_1,\ldots, y_d)&= \partial Q(c^{-1}y_1,\ldots,c^{-1}y_d) = D_{c^{-1}y_1} D_{c^{-1}y_2}\cdots D_{c^{-1}y_d}Q(x)\\ &=
\sum_{S\subseteq [d]} (-1)^{|S|} Q\left(c^{-1}\sum_{i\in S} y_i\right) =
\sum_{S\subseteq [d]} (-1)^{|S|} P\left(\sum_{i\in S} y_i\right) \\ &= \partial P(y_1,\ldots, y_d),
\end{align*}
This implies that $\partial_d(Q- |c|^dP)\equiv 0$ and thus $\deg(Q-|c|^dP) <d$.
\end{proof}
}

First we prove \Cref{thm:homogeneous-basis} for univariate polynomials.

\begin{lemma}\label{lem:homogeneous-basis-univariate}
There is a basis of homogeneous univariate polynomials for $\poly(\F\rightarrow \T)$.
\end{lemma}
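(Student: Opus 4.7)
The plan is to symmetrize each standard monomial into a homogeneous polynomial that agrees with it modulo lower-degree polynomials, and then conclude by an upper-triangular change of basis. By \Cref{struct}, univariate shift-zero polynomials are uniquely expressed via the monomials $q_{s,k}(x) = \frac{|x|^s}{p^{k+1}}$ with $1\le s \le p-1$, $k \ge 0$, and for each degree $d \ge 1$ there is a unique such monomial $q_d := q_{s_d, k_d}$ of exact degree $d$, where $d = s_d + k_d(p-1)$ and $s_d \in \{1, \dots, p-1\}$. Letting $\sigma_d := \sigma(d, k_d) \in \Z_{p^{k_d+1}}$ denote the value furnished by the preceding lemma, I would define
$$
H_d(x) \;:=\; (p-1)^{-1} \sum_{i=0}^{p-2} \sigma_d^{-i}\, q_d(\zeta^i x),
$$
where $(p-1)^{-1}$ and $\sigma_d^{-1}$ are interpreted as integers modulo $p^{k_d+1}$ (both $p-1$ and $\sigma_d$ are coprime to $p$, hence invertible) acting on the group $\U_{k_d+1}\subset\T$ in which $q_d$ takes values.

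I would first verify that $H_d$ is homogeneous. Reindexing the sum $i \mapsto i+1$ and using $\zeta^{p-1} = 1 \in \F$ together with $\sigma_d^{p-1} \equiv 1 \pmod{p^{k_d+1}}$ gives $H_d(\zeta x) = \sigma_d\, H_d(x)$; combined with $H_d(\underline{0})=0$, iterating this identity extends it to $H_d(cx) = \sigma_d^{j} H_d(x)$ whenever $c = \zeta^j$, certifying $H_d$ as homogeneous in the sense of \Cref{def:homogeneity}.

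Next I would show that $H_d - q_d$ has degree strictly less than $d$. By \Cref{claim:multiplicativeconstant}, there exist polynomials $r_i$ of degree $<d$ with $q_d(\zeta^i x) = |\zeta^i|^d\, q_d(x) + r_i(x)$. Substituting yields $H_d(x) = C \cdot q_d(x) + R(x)$ with $R$ of degree $<d$ and $C := (p-1)^{-1}\sum_{i=0}^{p-2} \sigma_d^{-i}|\zeta^i|^d$. Because $\sigma_d \equiv |\zeta|^d \pmod p$ (see \Cref{remark:homogeneity}), each summand satisfies $\sigma_d^{-i}|\zeta^i|^d \equiv 1 \pmod p$, so $C \equiv 1 \pmod p$. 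Writing $C = 1 + p\gamma$, the offending term $(C-1)q_d = p\gamma\, q_d$ has degree $d-(p-1) < d$ (or vanishes when $k_d = 0$, since then $pq_d \equiv 0$), so altogether $H_d - q_d$ has degree $<d$.

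A routine induction on $d$ turns the relation $H_d = q_d + (\text{degree} < d)$ into an upper-triangular change of basis: each $q_d$ can be rewritten as $H_d$ plus a $\Z$-linear combination of $q_{d'}$ with $d' < d$, which inductively becomes a $\Z$-linear combination of $H_{d'}$ with $d'<d$. Hence $\{H_d : d \ge 1\}$ generates $\poly(\F\to\T)$ modulo shifts, which are themselves homogeneous polynomials of degree $0$, and the uniqueness statement of \Cref{struct} transports along the triangular transformation to yield the claimed homogeneous basis. The main delicate point is the degree computation of the previous paragraph; the remaining obstacle is purely bookkeeping around the fact that the $\Z/p^{k_d+1}$-arithmetic used to build $H_d$ acts legitimately on $\U_{k_d+1}$, which holds because $p-1$ and $\sigma_d$ are units modulo $p^{k_d+1}$.
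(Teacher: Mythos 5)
Your proof is correct, but it takes a genuinely different route from the paper's. The paper constructs its degree-$d$ homogeneous polynomial $h_d$ by an inductive correction: it applies \Cref{claim:multiplicativeconstant} to $f(x)=q_d(\zeta x)-|\zeta|^s q_d(x)$, expands $f$ in the already-built homogeneous basis of lower degree, and subtracts each $h_e$ with coefficient $b_e/(\sigma_e-A)$, which requires checking that $\sigma_e-A$ is a unit modulo $p$ when $e\not\equiv d\pmod{p-1}$; the induction hypothesis is therefore needed even to write $h_d$ down. You instead define $H_d$ in closed form as the isotypic projection of $q_d$ onto the $\sigma_d$-eigenspace of the operator $P(x)\mapsto P(\zeta x)$, so homogeneity is immediate from reindexing the sum, and induction enters only in the final triangular change of basis; your degree computation via \Cref{claim:multiplicativeconstant} together with $C\equiv 1\pmod p$ (hence $(C-1)q_d$ drops degree by $p-1$ or vanishes) correctly replaces the paper's bookkeeping with the coefficients $b_e$. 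The one point to patch is the provenance of $\sigma_d$: you cite the preceding lemma, but that lemma defines $\sigma(d,k)$ only as the multiplier attached to an already-existing homogeneous polynomial of degree $d$ and depth $k$ --- precisely what is being constructed --- so invoking it here is circular. The fix is cosmetic: define $\sigma_d$ directly as the unique $(p-1)$-st root of unity in $\Z_{p^{k_d+1}}^*$ congruent to $|\zeta|^{d}$ modulo $p$ (for instance $\sigma_d \equiv |\zeta|^{dp^{k_d}} \bmod p^{k_d+1}$), whose existence is elementary and whose uniqueness is exactly what the proof of that lemma establishes; with this substitution every step of your argument, including $\sigma_d^{p-1}\equiv 1 \pmod{p^{k_d+1}}$ used in the reindexing, goes through.
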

\begin{proof}
We will prove by induction on $d$ that there is a basis $\{h_1,\ldots,h_d\}$ of homogeneous univariate polynomials  for $\poly_{\leq d}(\F\rightarrow \T)$ for every $d$. Let $\zeta$ be a fixed generator of $\F^*$. For any degree $d>0$, we will build a degree-$d$ homogeneous polynomial $h_d(x)$ such that $h_d(\zeta x)=\sigma_d  h_d(x)$ for some integer $\sigma_d$. The base case of $d\leq p-1$ is trivial as $\poly_{\leq p-1}(\F\rightarrow \T)$ consists of only classical polynomials, and those are spanned by $h_0(x):=\frac{1}{p},h_1(x):=\frac{|x|}{p}, \ldots,h_{p-1}(x):=\frac{|x|^{p-1}}{p}$. Now suppose that $d=s+(p-1)(k-1)$ with $0<s\leq p-1$, and $k>1$. It suffices to show that the degree-$d$ monomial $\frac{|x|^s}{p^k}$ can be expressed as a linear combination of homogeneous polynomials. Consider the function
$$
f(x) := \frac{|\zeta x|^s}{p^k} - \frac{|\zeta|^s |x|^s}{p^k}.
$$
\Cref{claim:multiplicativeconstant} implies that $\deg(f)< d$. Using the induction hypothesis, we can express $f(x)$ as a linear combination of $\frac{|x|^{s}}{p^\ell}$ for $\ell=0,\ldots,k-1$, and $h_e$ for $e<d$ with $e \neq s \mod (p-1)$:
$$
f(x)= \sum_{\ell=1}^{k-1} a_{\ell} \frac{|x|^s}{p^\ell} + \sum_{\substack{e<d,\\ e \ne d \mod (p-1)}} b_e h_e(x).
$$
Set $A:=|\zeta|^s + \sum_{\ell=1}^{k-1} a_\ell p^{k-\ell}$, so that
\begin{equation}
\label{eq:claimuni}
\frac{|\zeta x|^s}{p^k} - A \frac{|x|^s}{p^k} = \sum_{\substack{e<d,\\ e \ne d \mod (p-1)}} b_e h_e(x).
\end{equation}
By the induction hypothesis, for $e<d$, $h_e(\zeta x)=\sigma_e h(x)$ where $\sigma_e = |\zeta|^e \mod p$, and thus as $A = |\zeta |^s \mod p$, we have $\sigma_e \neq A \mod p$ when $e \neq s \mod (p-1)$. Consequently,
\begin{align*}
\sum_{\substack{e<d,\\ e \ne d \mod (p-1)}} b_e h_e(x)
&= \sum_{\substack{e<d,\\ e \ne d \mod (p-1)}} \frac{b_e}{\sigma_e-A} (\sigma_e - A) h_e(x)
\\ &= \sum_{\substack{e<d,\\ e \ne d \mod (p-1)}} \frac{b_e}{\sigma_e-A} (h_e(\zeta x) - A h_e(x)).
\end{align*}
Combing this with \eqref{eq:claimuni} we conclude  that
$$
h_d(x) := \frac{|x|^s}{p^k} -  \sum_{\substack{e<d,\\ e \ne d \mod (p-1)}}  \frac{b_e}{\sigma_e-A} h_e(x),
$$
satisfies
$$
h_d(\zeta x) = A h_d(x).$$
\ignore{ Finally to show  $A=|c|^d \mod p$, note that by \Cref{claim:multiplicativeconstant} we have $h_d(cx)-|c|^dh_d(x)=(A-|c|^d)h_d(x)$ is of degree $<d$ and thus $A-|c|^d$ must be divisible by $p$.}
\end{proof}

\begin{proofof}{of \Cref{thm:homogeneous-basis}}
We will show by induction on the degree $d$, that every degree $d$ monomial can be written as a linear combination of homogeneous polynomials. The base case of $d<p$ is trivial as such monomials are classical and thus homogeneous themselves. Consider a (non-classical) monomial $M(x_1,\ldots,x_n)=\frac{|x_1|^{s_1} \cdots |x_n|^{s_n}}{ p^k}$ of degree $d=s_1+\cdots+s_n+(p-1)(k-1)$. For every $i\in [n]$ let $g_i(x_i):=h_{s_i+(p-1)(k-1)}(x_i)$ where $h_{s_i+(p-1)(k-1)}(\cdot)$ is the homogeneous univariate polynomial from \Cref{lem:homogeneous-basis-univariate}. Every $g_i$ takes values in $\frac{1}{p^k}\Z/\Z$, and thus corresponds to a polynomial $G_i:\F \to \Z_{p^k}$. Define $F:\F^n \rightarrow \Z_{p^k}$ as
$$
F(x_1,\ldots,x_n) := G_1(x_1) \cdots G_n(x_n),
$$
and $f:\F^n\rightarrow \T$ as
$$
f(x_1,\ldots,x_n) := \frac{F(x_1,\ldots,x_n)}{p^k}.
$$
It is simple to verify that $\deg(f) = s_1+\ldots+s_n + (p-1)(k-1)=d$, it has only one monomial of $\deg(f)$, which is $\frac{|x_1|^{s_1}\cdots |x_n|^{s_n}}{ p^k}=M(x_1,\ldots,x_n)$, and it is homogeneous. Thus $M(x_1,\ldots,x_n)-f(x_1,\ldots,x_n)$ is of degree less than $d$ and by the induction hypothesis can be written as a linear combination of homogeneous polynomials.
\end{proofof}

\subsection{Near-orthogonality: Proof of \Cref{thm:nearorthogonality}}
\label{sec:proofofnearorthogonality}

\restate{\Cref{thm:nearorthogonality}}{
Let $L_1,\ldots, L_m$ be linear forms on $\ell$ variables and let $\cB=(P_1,\ldots,P_C)$ be an $\epsilon$-uniform polynomial factor for some $\epsilon\in (0,1]$ defined only by homogeneous polynomials. For every tuple $\Lambda$ of integers $(\lambda_{i,j})_{i\in [C], j\in [m]}$, define $P_\Lambda:(\F^n)^\ell\rightarrow \T$ as
$$
P_{\Lambda}(X)= \sum_{i\in [C], j\in [m]} \lambda_{i,j} P_i(L_j(X)).
$$
Then one of the following two statements holds: 
\begin{itemize}
\item $P_\Lambda \equiv 0.$
\item $P_\Lambda$ is non-constant and $\left| \E_{X\in (\F^n)^\ell} [\expo{P_\Lambda}] \right|< \epsilon$.
\end{itemize}
Furthermore $P_\Lambda \equiv 0$ if and only if for every $i \in [C]$, we have  $(\lambda_{i,j})_{j \in [m]} \in \Phi_{d_i,k_i}(\cL)^\perp$ where $d_i,k_i$ are the degree and depth of $P_i$, respectively.  }

We prove~\Cref{thm:nearorthogonality} in this section. Our proof uses similar derivative techniques as used in \cite{BFHHL13}, but in order to handle the general setting we will need a few technical claims which we present first. Recall that $|L|=\sum_{i=1}^\ell |\lambda_i|$ for a linear form $L=(\lambda_1,\ldots,\lambda_\ell)$.

\begin{claim}\label{claim:linearreform}
Let $d>0$ be an integer, and $L=(\lambda_1,\ldots,\lambda_\ell)\in \F^\ell$ be a linear form on $\ell$ variables. There exists linear forms $L_i=(\lambda_{i,1},\ldots,\lambda_{i,\ell})\in \F^\ell$ for $i=1,\ldots,m$, and coefficients $a_1,\ldots,a_m\in \Z$ with $m\leq |\F|^\ell$ such that
\begin{itemize}
\item $P(L(X))= \sum_{i=1}^m a_i P(L_i(X))$ for every degree-$d$ polynomial $P:\F^n\rightarrow \T$;
\item $|L_i|\leq d$ for every $i \in [m]$;
\item $\lambda_{i,j}\leq \lambda_j$ for every $i \in [m]$ and $j\in [\ell]$.
\end{itemize}
\end{claim}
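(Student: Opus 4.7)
The plan is to induct on $|L|$. The base case is $|L| \leq d$, where we simply take $m=1$, $L_1 = L$, $a_1 = 1$, and all three conditions hold trivially. For the inductive step, assume $|L| \geq d+1$. The idea is to peel one layer using the defining identity of degree-$d$ polynomials, producing linear forms of strictly smaller $|\cdot|$-norm, and then recurse.

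The key tool will be \Cref{poly}: since $\deg(P) \leq d$, the additive derivative of order $d+1$ vanishes, which expanded at $x=0$ gives
$$\sum_{S \subseteq [d+1]} (-1)^{d+1 - |S|}\, P\Bigl(\sum_{i \in S} y_i\Bigr) = 0$$
for all $y_1, \ldots, y_{d+1} \in \F^n$. I will apply this identity with $y_i = \mu_i(X)$, where $L = \mu_1 + \cdots + \mu_{d+1}$ is a decomposition of $L$ into $d+1$ linear forms with $|\mu_i| \geq 1$ each. To build such a decomposition, I split each coordinate separately: for $j \in [\ell]$, partition the integer $|\lambda_j| \in \{0,\ldots,p-1\}$ into a sum $|\mu_{1,j}| + \cdots + |\mu_{d+1,j}|$ of non-negative integers. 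Because all these integer sums stay below $p$, no modular wraparound occurs, so $\sum_i \mu_i = L$ in $\F^\ell$ while $|\mu_{i,j}| \leq |\lambda_j|$ holds simultaneously for all $i$ and $j$. Since $|L| \geq d+1$, the partition can be arranged so that each $\mu_i$ receives at least one unit, i.e.\ $|\mu_i| \geq 1$.

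Solving the derivative identity for the term $S = [d+1]$ isolates $P(L(X))$ as an integer linear combination of the terms $P(L_S(X))$ over proper subsets $S \subsetneq [d+1]$, where $L_S := \sum_{i \in S}\mu_i$. Each $L_S$ satisfies $|L_S| \leq |L| - 1$ (because at least one omitted $\mu_i$ has $|\mu_i| \geq 1$) and is still coordinate-wise dominated by $L$. Applying the induction hypothesis to every such $L_S$, and collecting like terms in the resulting expansion, yields a representation of the desired form with integer coefficients. The bound $m \leq |\F|^\ell$ is immediate since $\F^\ell$ contains only that many distinct linear forms, so after consolidation the total count cannot exceed this.

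The main subtlety to keep track of is that the claim mixes field arithmetic (the identity $\sum_i \mu_i = L$ lives in $\F^\ell$) with integer bookkeeping (the norm $|\cdot|$ and the coordinate inequality $\lambda_{i,j} \leq \lambda_j$). Splitting the standard integer representatives $|\lambda_j| \in \{0,\ldots,p-1\}$ rather than the field elements themselves is what makes both worlds compatible; checking that coordinate-wise domination is preserved both by the initial split and by the recursive calls is the only nontrivial step, and everything else reduces to routine bookkeeping.
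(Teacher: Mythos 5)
Your proof is correct and follows essentially the same route as the paper: both exploit the vanishing of a high-order additive derivative of $P$ in directions that sum to $L(X)$, isolate the full-set term $P(L(X))$, observe that every proper-subset term has strictly smaller $|\cdot|$-norm and is coordinate-wise dominated, and recurse. The only cosmetic difference is that the paper takes the order-$|L|$ derivative with each direction a single unit $x_i$, whereas you take the minimal order-$(d+1)$ derivative with directions $\mu_i(X)$ of size $|\mu_i|\geq 1$; the two are interchangeable.
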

\begin{proof}
The proof proceeds by simplifying $P(L(X))$ using identities that are valid for every polynomial  $P:\F^n\rightarrow \T$ of degree $d$.

In the case  $|L|\leq d$ there is nothing to prove. Assume otherwise that $|L|>d$. We will use the fact that for every choice of $y_1,\ldots,y_{|L|}\in \F^n$,
\begin{equation}\label{eq:reform1}
\sum_{S\subseteq [|L|]} (-1)^{|S|} P\left(\sum_{i\in S}y_i\right)\equiv 0.
\end{equation}
Let $X=(x_1,\ldots,x_\ell)\in (\F^n)^\ell$. Setting $|\lambda_i|$ of the vectors $y_1,\ldots,y_{|L|}$ to $x_i$ for every $i \in [\ell]$,  \Cref{eq:reform1} implies
$$
P(L(X))= \sum_i \alpha_i P(M_i(X)),
$$
where $M_i=(\tau_{i,1},\ldots, \tau_{i,\ell})$, $|M_i|\leq |L|-1$ and for every $j\in [\ell]$, $|\tau_{i,j}| \leq |\lambda_j|$. Repeatedly applying the same process to every $M_i$ with $|M_i|>d$ we arrive at the desired expansion.
\end{proof}

The next claim shows that we can further simplify the expression given in \cref{claim:linearreform}. Let $\LL_{d} \subseteq \F^\ell$ denote  the set of nonzero linear forms $L$ with $|L|\leq d$ and with the first (left-most) nonzero coefficient equal to $1$, e.g. $(0,1,0,2)\in \LL_{3}$ but $(2,1,0,0)\not\in \LL_{3}$.

\begin{claim}\label{claim:lc1}
For any linear form $L\in \F^\ell$ and integer $d>0$, there is a collection of  coefficients $\{a_{M,c}\in \Z\}_{M \in \LL_d, c\in \F^*}$ such that
for every degree-$d$ polynomial $P:\F^n\rightarrow \T$,
\begin{equation}\label{eq:claimlc1}
P(L(X))= \sum_{M \in \LL_d, c \in \F^*} a_{M,c}  P(c M(X)).
\end{equation}
\end{claim}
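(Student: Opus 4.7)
The plan is to reduce $P(L(X))$ in two interleaved stages: weight reduction via \Cref{claim:linearreform}, and normalization that factors out the leading nonzero coefficient to put each remaining term into the form $P(cM(X))$ with $M \in \LL_d$ and $c \in \F^*$. The subtlety is that when we normalize a linear form $L' = \nu M'$ by its leading coefficient $\nu \in \F^*$, the weight $|M'|$ may exceed $d$ even when $|L'| \leq d$, since the lifted absolute values in $\{0,\ldots,p-1\}$ are not multiplicative; so normalization and weight reduction need to be alternated.

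I would proceed by downward induction on $j \in \{1,\ldots,\ell\}$, the index of the leading nonzero coefficient. For a nonzero linear form $L' \in \F^\ell$, let $j(L')$ denote this index, so that $L' = \mu M'$ uniquely with $M'$ having leading coefficient $1$ at position $j(L')$, and $M' \in \LL_d$ iff $|M'| \leq d$. The induction hypothesis states that for every nonzero $L'$ with $j(L') > j$, $P(L'(X))$ admits an expansion of the desired form with integer coefficients depending only on $L'$. The base case $j = \ell$ is immediate, since then $L' = \mu e_\ell$ with $e_\ell \in \LL_d$, and $P(L'(X)) = P(\mu \cdot e_\ell(X))$ is already of the required form.

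For the inductive step, given $L$ with $j(L) = j$, write $L = cM$ with $M$ having leading $1$ at position $j$. If $|M| \leq d$, then $M \in \LL_d$ and we are done. Otherwise, I would apply \Cref{claim:linearreform} to the polynomial $Q(y) := P(cy)$ (still of degree $\leq d$) and the linear form $M$, obtaining an identity
\[
P(cM(X)) \;=\; \sum_r b_r\, P(cN_r(X)),
\]
with integer coefficients $b_r$ and linear forms $N_r$ satisfying $|N_r| \leq d$ and coordinates bounded componentwise by those of $M$. Since $M$'s coordinates before position $j$ vanish and its $j$-th coordinate equals $1$, each $N_r$ has vanishing coordinates before position $j$ and $j$-th coordinate in $\{0,1\}$. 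When the $j$-th coordinate of $N_r$ is $1$, then $N_r \in \LL_d$ and $P(cN_r(X))$ is already in the desired form; when it is $0$, we have $j(cN_r) = j(N_r) > j$, so the inductive hypothesis applies to $cN_r$.

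The coefficients $\{a_{M,c}\}$ assembled in this way are integers depending only on $L$, not on $P$, because \Cref{claim:linearreform} furnishes polynomial-independent identities (and the substitution $y \mapsto cy$ only relabels the coefficients without altering the linear structure). The main technical obstacle is precisely the possibility that normalization increases weight, which is what forces the interleaving with \Cref{claim:linearreform}; the key invariant that rescues the argument is that each invocation of \Cref{claim:linearreform} produces only forms whose leading position is $\geq j$, with equality forcing the leading coefficient to remain $1$, so the induction on $j$ terminates after at most $\ell$ steps.
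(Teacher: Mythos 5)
Your proof is correct and follows essentially the same route as the paper's: normalize the leading coefficient, apply \Cref{claim:linearreform} to the rescaled polynomial, keep the resulting forms whose leading entry stays $1$, and recurse on those whose leading entry drops to $0$. The only cosmetic difference is the induction variable (downward on the leading-index position rather than on the number of nonzero entries), but since a vanishing leading entry simultaneously shrinks the support and pushes the leading index rightward, the two inductions are interchangeable.
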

\begin{proof}
Similar to the proof of \Cref{claim:linearreform} we simplify $P(L(X))$ using identities that are valid for every polynomial  $P:\F^n\rightarrow \T$ of degree $d$.

We use induction on the number of nonzero entries of $L$. The case when $L$ has only one nonzero entry is trivial. For the induction step, choose $c\in \F$ so that the leading nonzero coefficient of $L'=c\cdot L$ is equal to $1$. Assume that $L'=(\lambda_1,\ldots,\lambda_\ell)$. If $|L'|\leq d$ we are done. Assume otherwise that $|L'|>d$. Applying \Cref{claim:linearreform} for the degree-$d$ polynomial $R(x):= P(c^{-1} x)$ and the linear form $L'$ we can write
\begin{equation}\label{eq:lc1-2}
P(L(X))=P(c^{-1} L'(X))= \sum_i \beta_i P(c^{-1} M_i(X)),
\end{equation}
where for every $i$, $M_i=(\lambda_{i,1},\ldots, \lambda_{i,\ell})$ satisfies $|M_i|\leq d$, and for every $j\in [\ell]$, $\lambda_{i,j}\leq \lambda_j$.
Let $\cI$ denote the set of indices $i$ such that the leading nonzero entry of $M_i$ is one. Then
\begin{equation}\label{eq:lc1-3}
P(L(x))= \sum_{i\in \cI} \alpha_i P(c^{-1} M_i(X)) + \sum_{j\notin \cI} \alpha_j P(c^{-1} M_j(X)).
\end{equation}
Notice that since the leading coefficient of $L'$ is $1$, for every $j\notin \cI$, $M_j$ has smaller support than $L$ and thus applying the induction hypothesis to the linear forms $c^{-1} M_j$ with $j\notin \cI$ concludes the claim.
\end{proof}

\ignore{
\begin{claim}\label{claim:lc12}
For any linear form $L\in \F^\ell$, degree $d \ge 1$ and depth $0 \le k \le \left \lfloor \frac{d-1}{p-1}\right \rfloor$, there exist sets of coefficients $\{a_{M,c} \in \Z\}_{M \in \LL_d, c \in \F^*}$ such that the following is true for every polynomial $P:\F^n\rightarrow \T$ of degree $d$ and depth $k$:
\begin{itemize}
\item $ P(L(X))= \sum_{M \in \LL_d, c \in \F^*} a_{M,c}  P(c M(X))$;
\item For every $M$ and $c$ with $a_{M,c} \neq 0$, we have $|M| \le \deg(a_{M,c} P)$.
\end{itemize}
\end{claim}
\begin{proof}
The proof is similar to that of \Cref{claim:lc1}, except that we will repeatedly apply \Cref{claim:linearreform} to the polynomials $a_{M,c} P$ in order to achieve $|M|\leq \deg(a_{M,c} P)$. Notice that  $\deg(a_{M,c} P)$ depends only on the depth and the degree of $P$ and not on the particular choice of the polynomial.
\end{proof}}

\Cref{claim:lc1} applies to all polynomials of degree $d$. If we also specify the depth then we can obtain a stronger statement.

\begin{claim}\label{claim:lc12}
For every $d,k$,  every system of linear forms $\{L_1,\ldots,L_m\}$, and constants $\{\lambda_i \in \mathbb{Z}\}_{i \in [m]}$, there exists  $\{a_{M} \in \Z\}_{M \in \LL_d}$ such that the following is true for every $(d,k)$-homogeneous polynomial $P:\F^n \to \T$:
\begin{itemize}
\item $ \sum_{i=1}^m \lambda_i P(L_i(X)) \equiv \sum_{M \in \LL_d} a_{M}  P(M(X))$;
\item For every $M$ with $a_{M} \neq 0$, we have $|M| \le \deg(a_{M} P)$.
\end{itemize}
\end{claim}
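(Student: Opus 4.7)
The plan is to argue by strong induction on $d$. The base case $d = 1$ is direct: \Cref{struct} forces $k = 0$ and $P$ to be a classical additive polynomial, so $P(L_i(X)) = \sum_{j=1}^\ell |\ell_{i,j}| P(x_j)$ gives the expansion over $\LL_1 = \{e_1, \dots, e_\ell\}$, and the weight bound $|e_j| = 1 = \deg(a_{e_j} P)$ holds automatically whenever the coefficient is nonzero mod $p$.

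For the inductive step, assume the claim at all degrees $d' < d$. I would first apply \Cref{claim:lc1} to each $L_i$ at degree $d$ to write
$$P(L_i(X)) = \sum_{M \in \LL_d,\, c \in \F^*} \alpha_{M,c}^{(i)} P(cM(X)).$$
Since $P$ is $(d,k)$-homogeneous, every $P(cM(X)) = \sigma_c P(M(X))$ for an integer multiplier $\sigma_c$ depending only on $(d,k,c)$ (namely a suitable power of $|\sigma(d,k)|$, by the uniqueness lemma). Collapsing the $c$-sum and taking the $\lambda_i$-combination yields
$$\sum_{i=1}^m \lambda_i P(L_i(X)) \equiv \sum_{M \in \LL_d} \gamma_M P(M(X)) \pmod{1},$$
with $\gamma_M := \sum_i \lambda_i \sum_c \alpha_{M,c}^{(i)} \sigma_c \in \Z$ depending only on the $L_i$'s, the $\lambda_i$'s, and $(d,k)$, not on $P$. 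This establishes the algebraic identity; what remains is to enforce the weight bound.

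Call $M \in \LL_d$ \emph{bad} if $\gamma_M P \not\equiv 0$ and $|M| > \deg(\gamma_M P)$. Any bad $M$ must have $j := v_p(\gamma_M) \ge 1$ (otherwise $\deg(\gamma_M P) = d \ge |M|$ trivially), so $\gamma_M P$ is $(d-j(p-1), k-j)$-homogeneous with $d-j(p-1) < d$; moreover $d-j(p-1) \ge 1$ since \Cref{struct} forces $k(p-1) \le d-1$, hence $j(p-1) \le d-1$. I would then apply the inductive hypothesis at the smaller degree $d-j(p-1)$ to the single-form system $\{M\}$ with multiplier $1$ acting on the polynomial $\gamma_M P$, producing
$$\gamma_M P(M(X)) = \sum_{M' \in \LL_{d-j(p-1)}} (a^{(M)}_{M'} \gamma_M) P(M'(X)),$$
in which every new coefficient already satisfies $|M'| \le \deg(a^{(M)}_{M'} \gamma_M P)$, every new form lies in $\LL_{d-j(p-1)} \subseteq \LL_d$, and, crucially, $|M'| \le d-j(p-1) < |M|$.

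The main obstacle is the collection step: when several of these reductions deposit contributions at the same $M'$, $p$-adic cancellation in the combined coefficient can raise $v_p$ and drop $\deg(a_{M'} P)$ below $|M'|$, creating fresh bad terms. The resolution is to always reduce the bad term of maximum weight first: since every replacement term has weight strictly less than the one being eliminated, the maximum weight among the remaining bad terms strictly decreases after each round, and newly created bad terms (if any) appear only at strictly smaller weights. Because weights lie in $\{1, \dots, d\}$, the procedure terminates in at most $d$ rounds, and the resulting coefficients $\{a_M\}_{M \in \LL_d}$ satisfy both conclusions of the claim.
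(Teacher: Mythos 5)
Your proposal is correct and follows essentially the same route as the paper: apply \Cref{claim:lc1}, collapse the terms $P(cM(X))$ to $\sigma_c P(M(X))$ via homogeneity and the uniqueness of $\sigma(d,k)$, and then iterate on the terms whose coefficient has gained $p$-adic valuation so that $\deg(a_M P)<|M|$. The paper compresses this iteration into the phrase ``by repeating this procedure,'' whereas you make the termination explicit through induction on the degree together with the strictly decreasing maximum weight of the bad forms; that is a welcome elaboration, not a different proof.
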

\begin{proof}
The proof is similar to that of \Cref{claim:lc1}, except that now we repeatedly apply \Cref{claim:lc1} to every term of the form $\lambda P(L(X))$ to express it as a linear combination of $P(cM(X))$ for $M \in \LL_{\deg(\lambda P)}$ and $c \in \F^*$. Then we use homogeneity to replace $P(cM(X))$ with $\sigma_c P(M(X))$, where if $c=\zeta^i$ for the fixed generator $\zeta \in \F^*$ then $\sigma_c=\sigma(d,k)^i$. By repeating this procedure we arrive at the desired expansion.
\end{proof}

We are now ready for the proof of our main theorem.  For a linear form $L=(\lambda_1,\ldots,\lambda_\ell)$, let $\lc(L)$ denote the index of its first nonzero entry, namely $\lc(L)\defeq \min_{i:\lambda_i\neq 0} i$.

\begin{proofof}{of \Cref{thm:nearorthogonality}}
Let $d'$ be the degree of the factor. For every $i \in [C]$, by   \Cref{claim:lc12} we have
\begin{equation}
\label{eq:nearorthog1}
\sum_{j=1}^m \lambda_{i,j}P_i(L_j(X)) = \sum_{M \in \LL_{d'}} \lambda'_{i,M} P_i(M(X))
\end{equation}
for some integers  $\lambda'_{i,M}$ such that $|M|\leq \deg(\lambda'_{i,M}P_i)$ if $\lambda'_{i,M} \neq 0$. The simplifications of  \Cref{claim:lc12} depend only on the degrees and depths of the polynomials. Hence if $\lambda'_{i,M}=0$ for all $M \in \LL_{d'}$, then $(\lambda_{i,1},\ldots,\lambda_{i,m}) \in \Phi(d_i,k_i)^\perp$. So to prove the theorem, it suffices to show that $P_\Lambda$ has small bias if $\lambda'_{i,M} P_i \not \equiv 0$ for some  $M \in \LL_{d'}$. Suppose this is true, and thus there exists a nonempty set $\cM \subseteq \LL_{d'}$ such that
$$
P_\Lambda(X)= \sum_{i\in [C],M \in \cM} \lambda'_{i,M} P_i(M(X)),
$$
and for every $M \in \cM$, there is at least one index $i \in [C]$ for which $\lambda'_{i,M} P_i \ne 0$. Choose $i^*\in [C]$ and $M^* \in \cM$ in the following manner.
\begin{itemize}
\item First, let $M^* \in \cM$ be such that  $\lc(M^*)=\min_{M \in \cM} \lc(M)$, and among these, $|M^*|$ is maximal.
\item Then, let $i^*\in [C]$ be such that $\deg(\lambda'_{i^*,M^*}P_{i^*})$ is maximized.
\end{itemize}

Without loss of generality assume that $i^*=1$, $\lc(M^*)=1$, and let $d:=\deg(\lambda'_{1,M^*}P_1)$.
We claim that if $\sum_{j\in [m]} \lambda_{1,j}P_1(L_j(X))$ is not the zero polynomial, then $\deg(P_\Lambda) \geq d$, and moreover $P_\Lambda$ has small bias. We prove this by deriving $P_\Lambda$ in specific directions in a manner that all the terms but $ \lambda'_{1,M^*}P_1(M^*(X))$ vanish.

Given a vector $\alpha\in \F^\ell$, an element $y\in \F^n$, and a function $P:(\F^n)^\ell\rightarrow \T$, define the derivative of $P$ according to the pair $(\alpha,y)$ as
\begin{equation}
D_{\alpha,y}P(x_1,\ldots,x_\ell)\defeq P(x_1+\alpha_1y, \cdots, x_\ell+\alpha_\ell y) - P(x_1,\ldots, x_\ell).
\end{equation}
Note that for every $M \in \cM$,
\begin{align*}
D_{\alpha,y}(P_i\circ M)(x_1,\cdots,x_\ell)&= P_i(M(x_1,\ldots, x_\ell)+M(\alpha)y)- P_i(M(x_1,\ldots, x_\ell))\\ &= (D_{\langle M, \alpha \rangle \cdot y}P_i)(M(x_1,\ldots, x_\ell)).
\end{align*}
Thus if $\alpha$ is chosen such that $\langle M,\alpha\rangle = 0$ then $D_{\alpha,y}(P_i\circ M) \equiv 0$.

Assume that $M^*=(w_{1},\ldots, w_{\ell})$, where $w_{1}=1$. Let $t:=|M^*|$, $\alpha_1:=e_{1}=(1,0,0,\ldots,0)\in \F^\ell$, and let $\alpha_2,\ldots, \alpha_t$ be the set of all vectors of the form $(-w,0,\ldots,0,1,0,\ldots,0)$ where $1$ is in the $i$-th coordinate for $i\in [2,\ell]$ and $0\leq w \leq w_{i}-1$. In addition, pick $\alpha_{t+1}=\cdots=\alpha_{d}=e_1$.
\begin{claim}\label{claim:deriving}
\begin{equation}\label{eq:finalroundofderivatives}
D_{\alpha_1,y_1}\cdots D_{\alpha_d,y_d} P_\Lambda(X) =
\bigg(D_{\langle M^*, \alpha_1\rangle y_1} \cdots D_{\langle M^*, \alpha_d \rangle y_d} \sum_{\substack{i\in [C]: \\ \deg(\lambda'_{i,M^*}P_i)=d}} \lambda'_{i,1}P_i\bigg)(M^*(X)).
\end{equation}
\end{claim}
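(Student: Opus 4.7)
The plan is to expand $P_\Lambda$ using the expression derived from (\ref{eq:nearorthog1}), namely
$$P_\Lambda(X) = \sum_{i \in [C],\, M \in \mathcal{M}} \lambda'_{i,M} P_i(M(X)),$$
and apply the successive derivatives term by term using the identity $D_{\alpha,y}(P_i \circ M)(X) = (D_{\langle M, \alpha\rangle y} P_i)(M(X))$ established just before the claim. The whole claim then reduces to two combinatorial facts about the inner products $\langle M, \alpha_j\rangle$: (a) $\langle M^*, \alpha_j\rangle \neq 0$ for every $j \in [d]$, so that no derivative kills the $M^*$-terms; and (b) for every $M \in \mathcal{M} \setminus \{M^*\}$ there exists some $j \in [t]$ with $\langle M, \alpha_j\rangle = 0$, so that the iterated derivative annihilates all other terms.

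For (a), when $\alpha_j = e_1$ (i.e.\ $j = 1$ and $j \in [t+1,d]$) we have $\langle M^*, \alpha_j\rangle = w_1 = 1$. When $\alpha_j = (-w, 0, \ldots, 0, 1, 0, \ldots, 0)$ with $1$ in coordinate $i \in [2,\ell]$ and $0 \le w \le w_i - 1$, we have $\langle M^*, \alpha_j\rangle = w_i - w \in \{1, \ldots, w_i\}$, which is nonzero in $\F_p$ since $w_i \le p-1$. For (b), fix $M = (m_1, \ldots, m_\ell) \in \mathcal{M}\setminus\{M^*\}$ and split on $\lc(M)$. If $\lc(M) > 1$, then $m_1 = 0$ and so $\langle M, \alpha_1\rangle = m_1 = 0$. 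Otherwise $\lc(M) = 1 = \lc(M^*)$; by the minimality of $\lc(M^*)$ this is consistent, and by the maximality of $|M^*|$ among forms with this smallest $\lc$, we have $\sum_{i \ge 2} |m_i| \le \sum_{i\ge 2} |w_i|$. If we had $|m_i| \ge |w_i|$ for every $i \ge 2$, summation would force equality coordinate-wise and hence $M = M^*$, contradicting $M \neq M^*$. Therefore some $i \in [2,\ell]$ satisfies $|m_i| < |w_i|$, i.e.\ $|m_i| \in \{0, 1, \ldots, w_i - 1\}$, which is exactly the range of $w$ used for that coordinate. Choosing $\alpha_j = (-|m_i|, 0, \ldots, 1, \ldots, 0)$ (with $1$ in position $i$) yields $\langle M, \alpha_j\rangle = m_i - m_i = 0$.

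With (a) and (b) in hand, every term with $M \neq M^*$ is killed by at least one of $D_{\alpha_1, y_1}, \ldots, D_{\alpha_t, y_t}$, and hence by the full operator $D_{\alpha_1, y_1}\cdots D_{\alpha_d, y_d}$. The surviving contribution comes entirely from the $M^*$-terms, on which iterated application of the identity $D_{\alpha, y}(P_i \circ M^*) = (D_{\langle M^*, \alpha\rangle y} P_i)\circ M^*$ gives
$$\bigg(D_{\langle M^*, \alpha_1\rangle y_1} \cdots D_{\langle M^*, \alpha_d \rangle y_d} \sum_{i \in [C]} \lambda'_{i,M^*} P_i\bigg)(M^*(X)).$$
Finally, since a polynomial of degree strictly less than $d$ is annihilated by $d$ additive derivatives, only the indices $i$ with $\deg(\lambda'_{i, M^*} P_i) = d$ contribute, which is exactly the sum appearing in the claim.

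The main obstacle is the combinatorial argument in (b): verifying that the specific list $\{\alpha_j\}_{j \in [t]}$, constructed from the minimum-$\lc$, maximum-$|M^*|$ choice, simultaneously satisfies both (a) and (b). The observation that the inequality $\sum_{i\ge 2}|m_i| \le \sum_{i\ge 2}|w_i|$ combined with $|m_i| \ge |w_i|$ coordinate-wise forces $M = M^*$ is the key lemma that locks everything together, and is precisely what the specific choice of $M^*$ was engineered to enable.
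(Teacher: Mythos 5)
Your proposal is correct and follows essentially the same route as the paper: expand $P_\Lambda$ over $\cM$, use $D_{\alpha,y}(P_i\circ M)=(D_{\langle M,\alpha\rangle y}P_i)\circ M$, kill every $M\neq M^*$ via the coordinate-wise comparison with $M^*$ forced by the minimal-$\lc$/maximal-$|M^*|$ choice, and discard the terms of degree below $d$ after $d$ derivatives. Your explicit verification that $\langle M^*,\alpha_j\rangle\neq 0$ for all $j$ is a welcome extra detail (needed for the subsequent Gowers-norm step) that the paper leaves implicit.
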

\begin{proof}
Deriving according to $(\alpha_1,y_1),\ldots,(\alpha_{t},y_{t})$ gives
\begin{equation}\label{eq:firstroundofderivatives}
D_{\alpha_1,y_1}\cdots D_{\alpha_{t},y_{t}} P_\Lambda(X) =
\left(D_{\langle M^*, \alpha_1\rangle y_1} \cdots D_{\langle M^*, \alpha_t\rangle y_t} \left(\sum_{i=1}^{C} \lambda'_{i,M^*}P_i\right)\right)(M^*(X)).
\end{equation}
This is because for every $M=(w'_1,\ldots,w'_\ell) \in \cM \setminus \{M^*\}$, either $w'_{1}=0$ in which case $\langle M, \alpha_1\rangle =0$, or otherwise $w_{1}=1$ and $|M|\leq t$, thus there must be an index $\xi \in [\ell]$ such that $w'_{\xi}<w_{\xi}$; By our choice of $\alpha_2,\ldots, \alpha_{t}$ there is $e$, $2\leq e \leq t$, such that $\alpha_e = (-w'_{\xi},0,\ldots,0,1,0,\ldots,0)$ where $1$ is in the $\xi$-th coordinate and thus $\langle M_j, \alpha_e \rangle =0$. Now, the claim follows after additionally deriving according to $(\alpha_{t+1},y_{t+1}),\ldots, (\alpha_d,y_d)$.
\end{proof}

\Cref{claim:deriving} implies that
$$
\E_{\substack{y_1,\ldots,y_d,\\ x_1,\ldots, x_\ell}} \left[ \expo{D_{\alpha_1,y_1}\cdots D_{\alpha_d,y_d}P_\Lambda)(x_1,\ldots,x_\ell)} \right] = \left\|\expo{\sum_{\substack{i\in[C]:\\ \deg(\lambda'_{i,1}P_i) = d}} \lambda'_{i,M^*}P_i}\right\|_{U^d}^{2^d}\leq \epsilon^{2^d},
$$
where the last inequality holds by the $\epsilon$-uniformity of the polynomial factor. Now the theorem follows from the next claim from \cite{BFHHL13} which is a repeated application of the Cauchy-Schwarz inequality. We include a proof for self-containment.

\begin{claim}[{{\cite[Claim 3.4]{BFHHL13}}}]
For any $\alpha_1,\ldots, \alpha_d\in \F^\ell\backslash \{\underline{0}\}$,
\begin{align*}
\E_{\substack{y_1,\dots, y_d,\\ x_1, \dots,
 x_\ell}} \left[ \expo{(D_{\alpha_1,y_1}\cdots D_{\alpha_d,y_d}P_\Lambda)(x_1,\ldots,x_\ell)} \right] \geq \left(\left|\E_{x_1,\dots,x_\ell} \expo{P_{\Lambda}(x_1,\dots,x_\ell)} \right|\right)^{2^d}.
\end{align*}
\end{claim}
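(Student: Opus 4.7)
The plan is to prove the inequality by an iterated Cauchy--Schwarz argument, following the same chain that shows $\|f\|_{U^d}^{2^d}\geq |\E f|^{2^d}$ for the ordinary Gowers norm. The only feature specific to our situation is that each derivative is along the restricted subspace $V_{\alpha_i}:=\{(\alpha_{i,1}y,\dots,\alpha_{i,\ell}y):y\in\F^n\}\subset(\F^n)^\ell$ rather than along a free direction in $(\F^n)^\ell$; since each $\alpha_i\neq\underline{0}$ we have $\dim V_{\alpha_i}=n$, and this is all the argument actually requires.

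I would set $f:=\expo{P_\Lambda}$ and first establish the base case $d=1$: for any nonzero $\alpha\in\F^\ell$,
\[
|\E_X f(X)|^2 \;\le\; \E_{y,X}\, f(X+\alpha y)\,\overline{f(X)}.
\]
The proof is a one-line averaging argument. Let $g(X):=\E_y f(X+\alpha y)$ be the orthogonal projection of $f$ onto the space of $V_\alpha$-invariant functions. Since $g$ is constant on cosets of $V_\alpha$,
\[
\E_{y,X}f(X+\alpha y)\overline{f(X)} \;=\; \E_X g(X)\overline{f(X)} \;=\; \E_X |g(X)|^2 \;=\; \|g\|_2^2,
\]
and $\|g\|_2^2\ge |\E g|^2=|\E f|^2$ by ordinary Cauchy--Schwarz. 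Incidentally this also verifies that the left-hand side is a non-negative real, which is needed to iterate the bound.

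The induction step is then routine. Writing $F_k(X;y_1,\dots,y_k):=\expo{D_{\alpha_1,y_1}\cdots D_{\alpha_k,y_k}P_\Lambda(X)}$, and assuming $\E_{X,y_1,\dots,y_k}F_k\ge|\E f|^{2^k}$, I would square, apply Cauchy--Schwarz in the $(y_1,\dots,y_k)$-variables to obtain
\[
\bigl(\E_{y_1,\dots,y_k}\E_X F_k\bigr)^2 \;\le\; \E_{y_1,\dots,y_k}\bigl|\E_X F_k\bigr|^2,
\]
and then apply the $d=1$ bound with direction $\alpha_{k+1}$ to $X\mapsto F_k(X;y_1,\dots,y_k)$ for each fixed tuple $(y_1,\dots,y_k)$. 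Because $\Delta_{\alpha_{k+1}y_{k+1}}F_k=F_{k+1}$, this yields $|\E f|^{2^{k+1}}\le\E_{y_1,\dots,y_{k+1},X}F_{k+1}$, completing the induction.

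There is no real obstacle: the only checkpoint is to verify at each stage of the induction that the $(y_1,\dots,y_k,X)$-average of $F_k$ is a non-negative real (so that squaring preserves the inequality direction), but this follows inductively from the identity $\E_{y,X}\Delta_{\alpha y}f(X)=\|g\|_2^2$ established above, applied to the (parametrised) function obtained at each step. The assumption $\alpha_i\neq\underline 0$ is used exactly once per induction step, to guarantee that $V_{\alpha_i}$ is a proper nonzero subspace so that the projection $g$ is a nontrivial conditional expectation.
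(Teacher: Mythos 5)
Your proof is correct and follows essentially the same route as the paper: a one-step Cauchy--Schwarz inequality for a single nonzero direction $\alpha$, iterated $d$ times. The only cosmetic difference is that you prove the one-step bound via conditional expectation onto cosets of $V_\alpha$, whereas the paper performs an explicit linear change of coordinates reducing to $\alpha=(1,0,\dots,0)$ and writes the same quantity as $\E_{x_2,\dots,x_\ell}\bigl|\E_{x_1}[\cdot]\bigr|^2$; these are identical arguments.
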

\begin{proof}
It suffices to show that for any function $P(x_1,\dots,x_\ell)$ and
nonzero $\bm{\alpha} \in \F^\ell$,
$$\left|\E_{y, x_1, \dots, x_\ell}[\expo{(D_{\bm{\alpha},
 y}P)(x_1,\dots,x_\ell)}]\right| \geq
\left|\E_{x_1, \dots, x_\ell}[\expo{P(x_1,\dots,x_\ell)}]\right|^2.
$$
Recall that $(D_{\bm{\alpha},
 y}P)(x_1,\dots,x_\ell) = P(x_1 + \alpha_1 y, \dots, x_\ell + \alpha_\ell y)
 - P(x_1, \dots, x_\ell)$. Without loss of generality, suppose $\alpha_1
 \neq 0$. We make a change of coordinates so that
 $\bm{\alpha}$ can be assumed to be $(1,0,\dots, 0)$. More precisely,
 define $P': (\F^n)^\ell \to \T$ as
$$P'(x_1, \dots, x_\ell) = P\left(x_1,
 \frac{x_2 + \alpha_2 x_1}{\alpha_1}, \frac{x_3 + \alpha_3 x_1}{\alpha_1}, \dots,
 \frac{x_\ell + \alpha_\ell x_1}{\alpha_1}\right),$$
so that $P(x_1, \dots, x_\ell) = P'(x_1, \alpha_1 x_2 - \alpha_2 x_1, \alpha_1 x_3 - \alpha_3x_1, \dots,
 \alpha_1 x_\ell - \alpha_\ell x_1)$, and thus $(D_{\bm{\alpha},
 y}P)(x_1,\dots,x_\ell) = P'(x_1 + \alpha_1 y, \alpha_1 x_2 - \alpha_2
 x_1, \dots, \alpha_1 x_\ell -\alpha_\ell x_1) - P'(x_1, \alpha_1 x_2 - \alpha_2
 x_1, \dots, \alpha_1 x_\ell -\alpha_\ell x_1)$. Therefore
\begin{align*}
&\left|\E_{y, x_1, \dots, x_\ell}[\expo{(D_{\bm{\alpha},
 y}P)(x_1,\dots,x_\ell)}]\right|\\
&= \left|\E_{y, x_1, \dots, x_\ell}[\expo{P'(x_1 + \alpha_1 y, \alpha_1 x_2 - \alpha_2
 x_1, \dots, \alpha_1 x_\ell -\alpha_\ell x_1) - P'(x_1, \alpha_1 x_2 - \alpha_2
 x_1, \dots, \alpha_1 x_\ell -\alpha_\ell x_1)}]\right|\\
&= \left|\E_{y, x_1, \dots, x_\ell}[\expo{P'(x_1 + \alpha_1 y, x_2,
 \dots, x_\ell) - P'(x_1, x_2, \dots, x_\ell)}]\right|= \E_{x_2,\dots,x_\ell}\left|\E_{x_1}[\expo{P'(x_1, x_2, \dots,
 x_\ell)}]\right|^2\\
&\geq \left|\E_{x_1, x_2, \dots, x_\ell} [\expo{P'(x_1, x_2, \dots,
 x_\ell)}]\right|^2
= \left|\E_{x_1, x_2, \dots, x_\ell} [\expo{P(x_1, x_2, \dots,
 x_\ell)}]\right|^2.
\end{align*}
\end{proof}
\end{proofof}

The above proof also implies the following proposition just by omitting the application of \Cref{claim:lc12}.

\begin{proposition}\label{pro:nearorthog-niceform}
Let $L_1,\ldots, L_m$ be linear forms on $\ell$ variables and let $\cB=(P_1,\ldots,P_C)$ be an $\epsilon$-uniform polynomial factor of degree $d>0$ for some $\epsilon\in (0,1]$ which is defined by only homogeneous polynomials. For every tuple $\Lambda$ of integers $(\lambda_{i,j})_{i\in [C], j\in [m]}$, define
$$
P_{\Lambda}(X)= \sum_{i\in [C], j\in [m]} \lambda_{i,j} P_i(L_j(X)),
$$
where $P_\Lambda:(\F^n)^\ell\rightarrow \T$. Moreover assume that for every $i\in [C],j\in [m]$, $L_j\in \LL_{\deg(\lambda_{i,j}P_i)}$. Then, $P_\Lambda$ is of degree $d=\max_{i,j}\deg(\lambda_{i,j}P_i)$ and $\norm{ \expo{P_\Lambda}}_{U^d}< \epsilon$.
\end{proposition}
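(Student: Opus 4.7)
The plan is to run the proof of Theorem \ref{thm:nearorthogonality} essentially verbatim, omitting only the initial application of Claim \ref{claim:lc12}. Under the hypothesis $L_j\in \LL_{\deg(\lambda_{i,j}P_i)}$ for every $(i,j)$, each linear form already has the ``nice'' shape that Claim \ref{claim:lc12} would otherwise produce, so no preliminary rewriting is needed and the selection-plus-derivation scheme applies directly to $P_\Lambda$ in its given form.

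I would set $d:=\max_{i,j}\deg(\lambda_{i,j}P_i)$ and restrict attention to pairs $(i,j)$ with $\lambda_{i,j}P_i\not\equiv 0$ and $\deg(\lambda_{i,j}P_i)=d$. Among such pairs, pick $j^*$ minimizing $\lc(L_{j^*})$ and, among those, maximizing $|L_{j^*}|$; then pick $i^*$ attaining $\deg(\lambda_{i^*,j^*}P_{i^*})=d$. Construct derivation directions $\alpha_1,\ldots,\alpha_d\in \F^\ell$ exactly as in the proof of Claim \ref{claim:deriving}: $\alpha_1=e_{\lc(L_{j^*})}$, the next $|L_{j^*}|-1$ vectors are the $(-w,0,\ldots,0,1,0,\ldots,0)$ constructions designed to zero out $\langle L_j,\alpha_r\rangle$ for degree-$d$ forms $L_j\ne L_{j^*}$ sharing the leading coordinate, and the remaining $d-|L_{j^*}|$ directions equal $e_{\lc(L_{j^*})}$.

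Applying $D_{\alpha_1,y_1}\cdots D_{\alpha_d,y_d}$ to $P_\Lambda$ causes two simultaneous simplifications. Every summand $\lambda_{i,j}P_i(L_j(X))$ with $\deg(\lambda_{i,j}P_i)<d$ vanishes because a $d$-fold additive derivative kills polynomials of degree less than $d$. The $\alpha$-construction zeros out every remaining (degree-$d$) summand with $L_j\ne L_{j^*}$, by the same case analysis as in Claim \ref{claim:deriving}, leaving
$$D_{\alpha_1,y_1}\cdots D_{\alpha_d,y_d} P_\Lambda(X) \;=\;\Bigl(D_{\langle L_{j^*},\alpha_1\rangle y_1}\cdots D_{\langle L_{j^*},\alpha_d\rangle y_d}\, Q\Bigr)\bigl(L_{j^*}(X)\bigr),$$
where $Q:=\sum_{i\,:\,\deg(\lambda_{i,j^*}P_i)=d}\lambda_{i,j^*}P_i$ is a nontrivial $\Z$-combination of the factor polynomials by the choice of $i^*$. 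Taking expectation, using $\langle L_{j^*},\alpha_r\rangle\in \F^*$ for a change of variables and the uniformity of $L_{j^*}(X)$ on $\F^n$, the right-hand side averages to $\|\expo{Q}\|_{U^d}^{2^d}<\epsilon^{2^d}$ by $\epsilon$-uniformity of $\cB$.

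Finally, since $d=\max_{i,j}\deg(\lambda_{i,j}P_i)$, the iterated derivative $D_{Y_1}\cdots D_{Y_d}P_\Lambda(X)$ is independent of $X$ for every $Y_r\in (\F^n)^\ell$, so the specific-direction bound above, combined with the Cauchy-Schwarz chain at the end of the proof of Theorem \ref{thm:nearorthogonality} (applied to this iterated derivative in place of $P_\Lambda$ itself), delivers the bound $\|\expo{P_\Lambda}\|_{U^d}<\epsilon$. This in turn forces $\deg(P_\Lambda)\ge d$, so together with the trivial upper bound $\deg(P_\Lambda)\le d$ we get $\deg(P_\Lambda)=d$. The main (really only) delicate point compared to Theorem \ref{thm:nearorthogonality} is verifying that the $\alpha$-construction annihilates all competing degree-$d$ forms directly, without invoking the preliminary rewriting from Claim \ref{claim:lc12}; the hypothesis $L_j\in \LL_{\deg(\lambda_{i,j}P_i)}$ is precisely the invariant that makes this work.
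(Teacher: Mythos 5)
Your selection of $(i^*,j^*)$, the construction of the directions $\alpha_1,\ldots,\alpha_d$, and the collapse of $D_{\alpha_1,y_1}\cdots D_{\alpha_d,y_d}P_\Lambda$ onto $\bigl(D_{\langle L_{j^*},\alpha_1\rangle y_1}\cdots D_{\langle L_{j^*},\alpha_d\rangle y_d}Q\bigr)(L_{j^*}(X))$ with $Q=\sum_{i:\deg(\lambda_{i,j^*}P_i)=d}\lambda_{i,j^*}P_i$ is exactly the paper's intended argument (the paper explicitly says the proposition is the theorem's proof with \Cref{claim:lc12} omitted), and the resulting bound $\|\expo{Q}\|_{U^d}^{2^d}<\epsilon^{2^d}$ is correct. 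The gap is in your last step. The Cauchy--Schwarz claim at the end of the theorem's proof lower-bounds the specific-direction average $\E\bigl[\expo{D_{\alpha_1,y_1}\cdots D_{\alpha_d,y_d}P_\Lambda}\bigr]$ by $|\E[\expo{P_\Lambda}]|^{2^d}$, i.e.\ it controls the \emph{bias} of $P_\Lambda$, not its Gowers norm. The quantity $\|\expo{P_\Lambda}\|_{U^d}^{2^d}$ is an average of $\expo{D_{Y_1}\cdots D_{Y_d}P_\Lambda(X)}$ over \emph{all} $Y_1,\ldots,Y_d\in(\F^n)^\ell$, whereas your derivation only controls the restriction to the rank-one directions $Y_r=(\alpha_{r,1}y_r,\ldots,\alpha_{r,\ell}y_r)$. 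There is no inequality between these two averages in the direction you need, and ``applying the Cauchy--Schwarz chain to the iterated derivative'' does not splice with the specific-direction bound: that bound is a restriction of the iterated derivative to rank-one $Y$'s, not a derivative of it in directions of $(\F^n)^{(d+1)\ell}$.

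The repair is to reverse the logical order. First conclude $\deg(P_\Lambda)=d$ directly from your derivation: the specific-direction average equals $\|\expo{Q}\|_{U^d}^{2^d}<1$, so $D_{\alpha_1,y_1}\cdots D_{\alpha_d,y_d}P_\Lambda$ is not identically zero, whence $\deg(P_\Lambda)\geq d$ (the reverse inequality being trivial). Then, as in the remark following \Cref{thm:nearorthogonality}, expand $D_{Y_1}\cdots D_{Y_d}P_\Lambda(X)$ as $\sum_{i,j,S}(-1)^{d-|S|}\lambda_{i,j}P_i\bigl(L_j(X+\sum_{r\in S}Y_r)\bigr)$ and apply the bias bound of \Cref{thm:nearorthogonality} to this nonzero polynomial over the expanded system of linear forms on $(d+1)\ell$ variables; note these expanded forms are in general \emph{not} in $\LL_{\deg(\lambda_{i,j}P_i)}$, so here one genuinely needs the full theorem including \Cref{claim:lc12}, not the lc12-free argument. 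This route yields $\|\expo{P_\Lambda}\|_{U^d}^{2^d}<\epsilon$ rather than the literal $\|\expo{P_\Lambda}\|_{U^d}<\epsilon$; the degree statement, which is all the paper uses downstream, is unaffected.
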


\subsection{The Gowers-Wolf Conjecture: Proof of \Cref{thm:gowerswolf}}
\label{sec:proofGowersWolf}

\restate{\Cref{thm:gowerswolf}}{
Let $\mathcal{L}=\{L_1,\ldots,L_m\}$ be a system of linear forms. Assume that $L_1^{d+1}$ is not in the linear span of $L_2^{d+1},\ldots,L_m^{d+1}$. For every $\eps>0$, there exists $\delta>0$ such that for any collection of functions $f_1,\ldots,f_m:\F^n \to \D$ with $\|f_1\|_{U^{d+1}} \le \delta$, we have
\begin{equation}
\label{eq:mainGowerWolf}
\left| \E_{X \in (\F^n)^k} \left[\prod_{i=1}^m f_i(L_i(X)) \right] \right| \le \eps.
\end{equation}
}

We prove~\Cref{thm:gowerswolf} in this section.
Note that since $L_1^{d+1}$ is not in the linear span of $L_2^{d+1},\ldots, L_m^{d+1}$ we have that $L_1$ is linearly independent from each $L_j$ for $j>1$. We claim that we may assume without loss of generality that $L_2,\ldots,L_m$ are pairwise linearly independent as well, namely that $\{L_1,\ldots,L_m\}$ has bounded Cauchy-Schwarz complexity. Assume that there are $j,\ell\in [m]\backslash\{1\}$ and a nonzero $c\in \F$ such that $L_{\ell} =cL_{j}$. Then we may define a new function $f'_j(x)= f_j(x)f_{\ell} (cx)$ so that $f'_j(L_j(X))= f_j(L_j(X))f_{\ell} (L_{\ell} (X))$ and remove the linear form $L_{\ell} $ and functions $f_j,f_{\ell} $ from the system. Now
$$
\E_{X \in (\F^n)^k} \left[\prod_{i=1}^m f_i(L_i(X)) \right] =
\E_{X \in (\F^n)^k} f'_j(L_j(X))\cdot \left[\prod_{i\in [m]\backslash \{j,\ell\}} f_i(L_i(X)) \right],
$$
and thus it suffices to bound the right hand side of the above identity. We may repeatedly apply the above procedure in order to achieve a new system of pairwise linearly independent linear forms along with their corresponding functions while keeping $L_1$ and $f_1$ untouched.

Thus we may assume that $\{L_1,\ldots,L_m\}$ is of finite Cauchy-Schwarz complexity $s$ for some $s \leq m < \infty$. The case when $d\geq s$ follows from \Cref{lem:gowerscount}, thus we will consider the case when $d<s$. We will use \Cref{cor:homogeneous-strong-decomposition} to write $f_i=g_i+h_i$ with
\begin{enumerate}
\item $g_i= \E[f_i|\cB]$, where $\cB$ is an $r$-regular polynomial factor of degree at most $s$ and complexity $C\leq C_{\mathrm{\mathrm{max}}}(p, s, \eta,\delta, r(\cdot))$ defined by only homogeneous polynomials, where $r$ is a sufficiently fast growing growth function (to be determined later);
\item $\norm{h_i}_{U^{s+1}}\leq \eta$.
\end{enumerate}

We first show that by choosing a sufficiently small $\eta$ we may replace $f_i$'s in \Cref{eq:mainGowerWolf} with $g_i$'s.
\begin{claim}
Choosing $\eta\leq \frac{\eps}{2m}$ we have
$$
\left| \E_{X \in (\F^n)^k} \left[\prod_{i=1}^m f_i(L_i(X)) \right] - \E_{X \in (\F^n)^k} \left[\prod_{i=1}^m g_i(L_i(X)) \right] \right| \leq \frac{\epsilon}{2}.
$$
\end{claim}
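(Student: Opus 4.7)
The plan is to use a telescoping hybrid argument together with the Green--Tao generalized von Neumann inequality (\Cref{lem:gowerscount}). First I would expand the difference of products by introducing, for each $i \in [m]$, the hybrid where the first $i-1$ factors have been replaced by $g_j$'s and the remaining factors are still $f_j$'s. This produces the identity
\begin{equation*}
\prod_{i=1}^m f_i(L_i(X)) - \prod_{i=1}^m g_i(L_i(X)) = \sum_{i=1}^m \left(\prod_{j<i} g_j(L_j(X))\right) h_i(L_i(X)) \left(\prod_{j>i} f_j(L_j(X))\right),
\end{equation*}
so that by the triangle inequality it suffices to bound each of the $m$ resulting expectations by $\eta$.

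For a single term, I would note that each $g_j = \E[f_j \mid \cB]$ inherits the property $\|g_j\|_\infty \le 1$ from the $f_j$'s, so $h_i = f_i - g_i$ satisfies $\|h_i\|_\infty \le 2$. Pulling this factor of $2$ outside, I would apply \Cref{lem:gowerscount} to the $\D$-valued functions $g_1,\dots,g_{i-1}, h_i/2, f_{i+1},\dots,f_m$; since the reduction at the start of the proof of \Cref{thm:gowerswolf} already arranged that $\{L_1,\dots,L_m\}$ has finite Cauchy--Schwarz complexity $s$, the lemma bounds the $i$-th term by $2\cdot\|h_i/2\|_{U^{s+1}} = \|h_i\|_{U^{s+1}} \le \eta$. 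Note this matches exactly the Gowers norm order $U^{s+1}$ guaranteed by the decomposition in \Cref{cor:homogeneous-strong-decomposition}, applied here with degree parameter $s$.

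Summing the $m$ terms gives a total bound of $m\eta$, and the hypothesis $\eta \le \eps/(2m)$ closes the argument. There is no serious obstacle: the only subtlety is ensuring that \Cref{lem:gowerscount} is applicable, i.e.\ that the linear system has bounded Cauchy--Schwarz complexity, but this has already been arranged by the earlier step that merges pairs of linearly dependent forms. Thus the claim reduces to a routine hybrid estimate.
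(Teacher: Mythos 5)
Your proof is correct and takes essentially the same route as the paper: the identical telescoping decomposition followed by an application of \Cref{lem:gowerscount} to each hybrid term, using that the reduction to pairwise independent forms gives finite Cauchy--Schwarz complexity $s$. Your normalization of $h_i/2$ to restore $\D$-valuedness is a detail the paper elides, and as you observe it costs nothing by homogeneity of the $U^{s+1}$ norm.
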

\begin{proof}
We have
\begin{align*}
\left| \E_X \left[\prod_{i=1}^m f_i(L_i(X)) \right] -  \E_X \left[\prod_{i=1}^m g_i(L_i(X)) \right]\right| &= \left| \sum_{i=1}^m \E_X\left[ h_i(L_i(X)) \cdot \prod_{j=1}^{i-1} g_j(L_j(X))\cdot \prod_{j=i+1}^m
 f_j(L_j(X))\right]\right| \\ &\leq
\sum_{i=1}^m \left|\E_X\left[ h_i(L_i(X)) \cdot \prod_{j=1}^{i-1} g_j(L_j(X))\cdot \prod_{j=i+1}^m
 f_j(L_j(X))\right]\right| \\ &\leq
\sum_{i=1}^m \norm{h_i}_{U^{s+1}} \leq m\cdot \eta \leq \frac{\eps}{2},
\end{align*}
where the second inequality follows from \Cref{lem:gowerscount} since the Cauchy-Schwarz complexity of $\L$ is $s$.
\end{proof}

Thus it is sufficient to bound $\left| \E_{X \in (\F^n)^k} \big[\prod_{i=1}^m g_i(L_i(X)) \big] \right|$ by $\eps/2$. For each $i$, $g_i= \E[f_i|\cB]$ and thus
$$
g_i(x)= \Gamma_i(P_1(x),\ldots, P_C(x)),
$$
where $P_1,\ldots, P_C$ are the (non-classical) homogeneous polynomials of degree $\leq s$ defining $\cB$ and $\Gamma_i:\T^C\rightarrow \D$ is a function. Let $k_i$ denote the depth of the polynomial $P_i$ so that by \Cref{struct}, each $P_i$ takes values in $\U_{k_i+1}=\frac{1}{p^{k_i+1}} \Z/\Z$. Moreover let $\Sigma:=\Z_{p^{k_1+1}}\times \cdots \times \Z_{p^{k_C+1}}$. Using the Fourier expansion of $\Gamma_i$ we have
\begin{equation}\label{eq:mainsimplefourier}
g_i(x)= \sum_{\Lambda=(\lambda_1,\ldots, \lambda_C)\in \Sigma} \widehat{\Gamma}_i(\Lambda)\cdot \expo{\sum_{j=1}^C \lambda_j P_j(x)},
\end{equation}
where $\widehat{\Gamma}_i(\Lambda)$  is the Fourier coefficient of $\Gamma_i$ corresponding to $\Lambda$. Let $P_\Lambda:=\sum_{j=1}^C \lambda_j P_j(x)$ for the sake of brevity so that we may write
\begin{equation}\label{eq:mainfourier}
\E_X\left[\prod_{i=1}^m g_i(L_i(X))\right] = \sum_{\Lambda_1,\ldots,\Lambda_m\in \Sigma} \left( \prod_{i=1}^m \widehat{\Gamma}_i(\Lambda_i)\right) \cdot \E_X \left[\expo{\sum_{i=1}^m P_{\Lambda_i}(L_i(X))}\right].
\end{equation}
We will show that for a sufficiently fast growing choice of the regularity function $r(\cdot)$ we may bound each term in \Cref{eq:mainfourier} by $\sigma:=\frac{\eps}{2|\Sigma|^m}$, thus concluding the proof by the triangle inequality. We will first show that the terms for which $\deg(P_{\Lambda_1})\leq d$ can be made small.
\begin{claim}
Let $\Lambda\in \Sigma$ be such that $\deg(P_\Lambda)\leq d$. For a sufficiently fast growing choice of $r(\cdot)$ and choice of $\delta\leq \frac{\sigma}{2}$ ,
$$
\left|\widehat{\Gamma}_1(\Lambda)\right|<\sigma.
$$
\end{claim}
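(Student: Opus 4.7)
The plan is to relate $\widehat{\Gamma}_1(\Lambda)$ to an inner product involving $f_1$, and then apply a Gowers-Cauchy-Schwarz (generalized von Neumann) inequality to take advantage of the assumption $\|f_1\|_{U^{d+1}}\le\delta$. By definition of the discrete Fourier transform on the group $\U_{k_1+1}\times\cdots\times\U_{k_C+1}$,
$$\widehat{\Gamma}_1(\Lambda) \;=\; \frac{1}{|\Sigma|}\sum_{\vec{\beta}}\Gamma_1(\vec{\beta})\,\expo{-\sum_{j=1}^C\lambda_j\beta_j},$$
whereas
$$\E_x\!\left[g_1(x)\,\expo{-P_\Lambda(x)}\right] \;=\; \E_x\!\left[\Gamma_1(P_1(x),\dots,P_C(x))\,\expo{-\sum_j\lambda_j P_j(x)}\right]$$
is the same quantity but with the uniform average over $\vec{\beta}$ replaced by the distribution of $(P_1(x),\dots,P_C(x))$. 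I would apply the near-equidistribution theorem (\Cref{equidist}) to the trivial system consisting of one linear form in one variable to show that this distribution is close to uniform on $\U_{k_1+1}\times\cdots\times\U_{k_C+1}$; the total-variation discrepancy can be made smaller than $\sigma/2$ by choosing $r(\cdot)$ large enough so that $\cB$ is sufficiently uniform (\Cref{remark:uniformityvsrank}). Consequently $\widehat{\Gamma}_1(\Lambda)=\E_x[g_1(x)\,\expo{-P_\Lambda(x)}]\pm\sigma/2$.

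Next, since $\expo{-P_\Lambda}$ is $\cB$-measurable and $g_1=\E[f_1|\cB]$, the conditional-expectation identity gives
$$\E_x\!\left[g_1(x)\,\expo{-P_\Lambda(x)}\right]\;=\;\E_x\!\left[f_1(x)\,\expo{-P_\Lambda(x)}\right].$$
Because $\deg(P_\Lambda)\le d$, the function $\expo{-P_\Lambda}$ satisfies $\norm{\expo{-P_\Lambda}}_{U^{d+1}}=1$. The Gowers-Cauchy-Schwarz inequality (an iterated Cauchy-Schwarz argument analogous to the one used at the end of the proof of \Cref{thm:nearorthogonality}) then yields
$$\left|\E_x\!\left[f_1(x)\,\expo{-P_\Lambda(x)}\right]\right| \;\le\; \norm{f_1}_{U^{d+1}} \;\le\; \delta \;\le\; \frac{\sigma}{2}.$$

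Combining these two bounds gives $|\widehat{\Gamma}_1(\Lambda)|<\sigma$, completing the plan. The main technical point to be careful about is the standard bootstrapping of the regularity function: the uniformity of $\cB$ required for the equidistribution error to be below $\sigma/2$ depends on $|\Sigma|=\prod_j p^{k_j+1}$, which in turn depends on the complexity $C$, which is itself bounded by $C_{\mathrm{max}}(p,s,\eta,\delta,r(\cdot))$. This circular dependence is resolved in the usual way by choosing $r(\cdot)$ to be a fast-growing function that dominates the resulting bound on $|\Sigma|$, exactly as is done in \cite{BFHHL13}.
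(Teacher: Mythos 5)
Your proposal is correct and follows essentially the same route as the paper: the main term $\E_x[g_1(x)\expo{-P_\Lambda(x)}]=\E_x[f_1(x)\expo{-P_\Lambda(x)}]$ is bounded by $\|f_1\|_{U^{d+1}}\le\delta$ using $\deg(P_\Lambda)\le d$ together with monotonicity of Gowers norms, and the discrepancy between this quantity and $\widehat{\Gamma}_1(\Lambda)$ is absorbed into the regularity of $\cB$ with the usual bootstrapping of $r(\cdot)$ against $|\Sigma|$. The only cosmetic difference is that the paper bounds that discrepancy directly as $\sum_{\Lambda'\ne\Lambda}\widehat{\Gamma}_1(\Lambda')\,\E_x\left[\expo{P_{\Lambda'-\Lambda}(x)}\right]$ via \Cref{remark:uniformityvsrank}, whereas you route it through \Cref{equidist} applied to a single linear form; these are the same estimate packaged differently.
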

\begin{proof}
It follows from \Cref{eq:mainsimplefourier} that
$$
\widehat{\Gamma}_1(\Lambda) = \E_x \left[g_1(x) \expo{-P_\Lambda(x)}\right] - \sum_{\Lambda' \in \Sigma \backslash \{\Lambda\}} \widehat{\Gamma}_1(\Lambda')\cdot \E_x\left[\expo{P_{\Lambda'}(x)-P_\Lambda(x)}\right].
$$
Note that $\norm{f_1}_{U^{d+1}} \le \delta$ and thus$$
 \left|\E_x \left[g_1(x) \expo{-P_\Lambda(x)}\right]\right| =\left|  \E_x \left[f_1(x) \expo{-P_\Lambda(x)}\right]\right| \leq \norm{f_1  \expo{-P_\Lambda}}_{U^{d+1}}=  \norm{f_1}_{U^{d+1}} \leq \delta\leq \sigma/2,
$$
where we used of the fact that $g_1=\E[f_1|\cB]$ and the fact that Gowers norms are increasing in $d$. Finally the terms of the form $\widehat{\Gamma}_1(\Lambda')\cdot \E_x[\expo{P_{\Lambda'}(x)-P_\Lambda(x)}]$ with $\Gamma'\neq \Gamma$ can be made arbitrarily small by choosing a sufficiently fast growing $r(\cdot)$ due to \Cref{remark:uniformityvsrank}, since $P_{\Lambda'}-P_{\Lambda} = P_{\Lambda'-\Lambda}$ is a nonzero linear combination of the polynomials defining the $r$-regular factor $\cB$.
\end{proof}
The above claim allows us to bound the terms from \Cref{eq:mainfourier} corresponding to tuples $(\Lambda_1,\ldots, \Lambda_m)\in \Sigma^m$ with $\deg(P_{\Lambda_1})\leq d$. This is because for such terms $|\widehat{\Gamma}_1(\Lambda_1)|< \sigma$ by the above claim, and $|\widehat{\Gamma}_i(\Lambda_i)|\leq 1$ since $f_i$'s take values in $\D$. It remains to bound the terms for which $\deg(P_{\Lambda_1})> d$. We will need the following claim.
\begin{lemma}\label{lemma:lineardependence}
Assume that $L_1^{d+1}$ is not in the linear span of $L_2^{d+1},\ldots,L_m^{d+1}$, and let $(\Lambda_1,\ldots, \Lambda_m)\in \Sigma^m$ be such that $\deg(P_{\Lambda_1})\geq d+1$. Then
$$
\sum_{i=1}^m P_{\Lambda_i}(L_i(X))\not\equiv 0.
$$
\end{lemma}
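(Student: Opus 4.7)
The plan is to prove the contrapositive: assume $\sum_i P_{\Lambda_i}(L_i(X)) \equiv 0$, and derive $L_1^{d+1} \in \mathrm{span}(L_2^{d+1}, \ldots, L_m^{d+1})$, contradicting the hypothesis. The argument has three stages: (i) differentiate the identity $e := \deg P_{\Lambda_1} \geq d+1$ times, (ii) average over $X$ to eliminate all but the exact-degree-$e$ terms, and (iii) turn the resulting identity into a tensor relation and contract from degree $e$ down to $d+1$. Throughout, I may assume $L_i \neq 0$ for all $i$, as $L_i = 0$ terms contribute zero to every additive derivative.

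First, apply $D_{Y_1}\cdots D_{Y_e}$ to the identity. Using the chain rule $D_Y(P \circ L) = (D_{L(Y)} P) \circ L$, the $i$-th term becomes $D_{L_i(Y_1)}\cdots D_{L_i(Y_e)} P_{\Lambda_i}(L_i(X))$. This vanishes when $\deg P_{\Lambda_i} < e$, equals the classical multilinear form $\partial P_{\Lambda_i}(L_i(Y_1), \ldots, L_i(Y_e))$ (constant in $X$) when $\deg P_{\Lambda_i} = e$, and is a nonconstant polynomial in $X$ when $\deg P_{\Lambda_i} > e$.

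Second, take $\E_X$. Since $L_i \neq 0$ makes $L_i(X)$ uniform on $\F^n$, the expectation of each derivative term becomes $\E_y[D_{h_1}\cdots D_{h_e} P_{\Lambda_i}(y)]$, which vanishes for every $e \geq 1$ by iterated translation invariance (since $\E_y[D_h P(y)] = \E_y[P(y+h)] - \E_y[P(y)] = 0$). Consequently every $\deg P_{\Lambda_i} > e$ term averages away, leaving the polynomial identity $\sum_{i \in I_e} \partial P_{\Lambda_i}(L_i(Y_1), \ldots, L_i(Y_e)) \equiv 0$ in the $Y$'s, where $I_e = \{i : \deg P_{\Lambda_i} = e\} \ni 1$.

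Third, convert to a tensor identity. Writing each $\partial P_{\Lambda_i}$ in coordinates as a symmetric tensor $(a_i)_{(s_1,\ldots,s_e)} \in \F^{n^e}$ and expanding $L_i(Y_j) = \sum_r (L_i)_r (Y_j)_r$, matching the coefficient of the monomial $\prod_j (Y_j)_{r_j, s_j}$ yields $\sum_{i \in I_e} (a_i)_{(s_1,\ldots,s_e)} \prod_j (L_i)_{r_j} = 0$; varying $(r_1,\ldots,r_e)$ for fixed $(s_1,\ldots,s_e)$ gives precisely $\sum_{i \in I_e} (a_i)_{(s_1,\ldots,s_e)} L_i^e = 0$ in $\F^{k^e}$. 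Since $\partial P_{\Lambda_1}$ is nonzero classical by \Cref{lem:derivativepoly}, some index $(s_1,\ldots,s_e)$ has $(a_1)_{(s_1,\ldots,s_e)} \neq 0$, and we solve for $L_1^e \in \mathrm{span}(L_j^e : j \geq 2)$. To finish, observe that $L_1 \neq 0$ (since $L_1^{d+1} \neq 0$) makes $L_1^{e-d-1}$ a nonzero tensor, so we may choose $\phi \in ((\F^k)^{\otimes(e-d-1)})^*$ with $\phi(L_1^{e-d-1}) \neq 0$; applying $\mathrm{id} \otimes \phi$ to the factorization $L_i^e = L_i^{d+1} \otimes L_i^{e-d-1}$ places $L_1^{d+1}$ in $\mathrm{span}(L_j^{d+1} : j \geq 2)$, contradicting the hypothesis.

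I expect the main obstacle to be the averaging step in stage two: it is exactly this averaging that lets the argument succeed when $\deg P_{\Lambda_1} < \max_i \deg P_{\Lambda_i}$, since without it the naive $e'$-fold differentiation (with $e' = \max_i \deg P_{\Lambda_i}$) would yield only a tensor relation among $L_i^{e'}$ for $i$ achieving the global max, which need not include $L_1$.
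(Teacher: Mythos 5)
Your stages (i) and (iii) are sound — the chain rule $D_Y(P\circ L)=(D_{L(Y)}P)\circ L$, the coefficient extraction from the multilinear identity, and the contraction from $L_i^{\otimes e}$ down to $L_i^{\otimes(d+1)}$ via a functional $\phi$ with $\phi(L_1^{\otimes(e-d-1)})\neq 0$ are all correct. The gap is in stage (ii), which you rightly identify as the crux. After differentiating $e$ times you have $c(\vec Y)+S(X,\vec Y)=0$ for all $X,\vec Y$, where $c(\vec Y)=\sum_{i\in I_e}\partial P_{\Lambda_i}(L_i(Y_1),\ldots,L_i(Y_e))$ is the $X$-independent part and $S$ collects the terms with $\deg P_{\Lambda_i}>e$, and you want to kill $S$ by averaging over $X$. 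But these are $\T$-valued functions: $\E_X$ of a $\T$-valued function is not defined (one cannot divide by $p^{nk}$ in $\R/\Z$), and the meaningful substitute — summing over $X\in(\F^n)^k$ — yields $p^{nk}\,c(\vec Y)=0$, which is vacuous because $c(\vec Y)$ takes values in $\tfrac1p\Z/\Z$ (each $\partial P_{\Lambda_i}$ is classical by \Cref{lem:derivativepoly}) and is already annihilated by $p$. The invoked identity ``$\E_y[D_hP(y)]=0$'' carries no content for the same reason. All the hypothesis actually gives you is that $S(\cdot,\vec Y)$ is the constant $-c(\vec Y)$, and its value $S(0,\vec Y)=\sum_{\deg>e}D_{L_i(Y_1)}\cdots D_{L_i(Y_e)}P_{\Lambda_i}(0)$ has no a priori reason to vanish. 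So the degree-$e$ ``layer'' of the identity cannot be isolated this way; the impossibility of grading an identity of non-classical polynomials by degree is exactly the low-characteristic difficulty this lemma has to confront.

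The paper circumvents this by a different route. It first applies \Cref{thm:nearorthogonality} — using the regularity of the factor, which your argument never invokes — to decouple the identity into separate identities $\sum_i\lambda_{i,j}P_j(L_i(X))\equiv 0$ for each single factor polynomial $P_j$. It then uses the homogeneity of $P_{j^*}$, together with \Cref{claim:lc1} and \Cref{pro:nearorthog-niceform}, to show (\Cref{claim:replacePwithQ}) that the same identity must hold with $P_{j^*}$ replaced by an arbitrary classical homogeneous polynomial $Q$ of the top degree $d^*$; only then does it extract tensor coefficients, and only at the maximal degree, where no averaging is needed. To repair your outline you would need a genuine substitute for stage (ii); as written, the lemma is not proved.
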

This combined with \Cref{thm:nearorthogonality} implies that for a sufficiently fast growing choice of $r(\cdot)$, $\E_{X}\big[\expo{\sum_{i=1}^m P_{\Lambda_i}(L_i(X))}\big]<\sigma$ which completes the proof of \Cref{thm:gowerswolf}. Thus, we are left with proving~\Cref{lemma:lineardependence}.
\begin{proofof}{of \Cref{lemma:lineardependence}}
Assume to the contrary that $\sum_{i=1}^m P_{\Lambda_i}(L_i(X))\equiv 0$. Denoting the coordinates of $\Lambda_i$ by $(\lambda_{i,1},\ldots, \lambda_{i,C})\in \Sigma^C$ we have
$$
\sum_{i=1}^m P_{\Lambda_i}(L_i(X)) = \sum_{i\in [m], j\in [C]} \lambda_{i,j} P_j(L_i(X))\equiv 0.
$$
Since the polynomial factor defined by $P_1,\ldots, P_C$ is $r$-regular with a sufficiently fast growing growth function $r(\cdot)$, \Cref{thm:nearorthogonality} implies that for every $j\in [C]$ we must have that
\begin{equation}\label{eq:nonclassicalcancel}
\sum_{i=1}^m \lambda_{i,j}P_j(L_i(X)) \equiv 0.
\end{equation}
Since $\deg(P_{\Lambda_1})> d$, there must exist $j\in [C]$ such that $\lambda_{1,j}\neq 0$ and $\deg(\lambda_{1,j}P_j)>d$. Let $j^*\in [C]$ be such that $\deg(\lambda_{1,j^*}P_{j^*})$ is maximized, and let $d^*:=\deg(P_{j^*})$
(note that $\deg(\lambda_{1,j^*}P_{j^*}) \le d^*$). We will first prove that replacing $P_{j^*}$ with a classical homogeneous polynomial $Q$ of the same degree, \Cref{eq:nonclassicalcancel} for $j=j^*$ would still hold.

\begin{claim}\label{claim:replacePwithQ}
Let $Q:\F^n \to \T$ be a classical homogeneous polynomial with $\deg(Q)=d^*$. Then
$$
\sum_{i=1}^m \lambda_{i,j^*} Q(L_i(X)) \equiv 0.
$$
\end{claim}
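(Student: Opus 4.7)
The plan is to differentiate the identity $\sum_i \lambda_{i,j^*} P_{j^*}(L_i(X))\equiv 0$ exactly $d^*$ times, reducing it to a purely linear-algebraic condition on the tensor powers $L_i^{\otimes d^*}$ that no longer involves $P_{j^*}$ specifically; this condition then transfers immediately to any classical homogeneous $Q$ of degree $d^*$ by replaying the same derivative computation for $Q$.

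To carry out the reduction, I would apply $D_{Y_1}\cdots D_{Y_{d^*}}$ in auxiliary directions $Y_1,\ldots,Y_{d^*}\in(\F^n)^\ell$. Using $D_{Y_j}(P_{j^*}\circ L_i)(X)=(D_{L_i(Y_j)}P_{j^*})(L_i(X))$ and the fact that the $d^*$-fold additive derivative of a degree-$d^*$ polynomial is independent of the base point, the identity collapses to
\[
\sum_i \lambda_{i,j^*}\,\partial P_{j^*}\bigl(L_i(Y_1),\ldots,L_i(Y_{d^*})\bigr)\equiv 0.
\]
By \Cref{lem:derivativepolynomial}, $\partial P_{j^*}$ is a nonzero classical multilinear polynomial of degree $d^*$ taking values in $\tfrac{1}{p}\Z/\Z$, and hence is $\F_p$-multilinear in each slot. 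Expanding $L_i(Y_j)=\sum_r \ell_{i,r}y_{j,r}$ yields
\[
\sum_{r_1,\ldots,r_{d^*}\in [\ell]}\Bigl(\sum_{i=1}^m \lambda_{i,j^*}\,\ell_{i,r_1}\cdots \ell_{i,r_{d^*}}\Bigr)\partial P_{j^*}(y_{1,r_1},\ldots,y_{d^*,r_{d^*}})\equiv 0.
\]
To isolate one multi-index $(r_1^0,\ldots,r_{d^*}^0)$, I would set $y_{j,s}=0$ whenever $s\neq r_j^0$, which kills every other term by multilinearity of $\partial P_{j^*}$; since $\partial P_{j^*}$ is nonzero with image in $\tfrac{1}{p}\Z/\Z$, we may choose $v_1,\ldots,v_{d^*}$ so that $\partial P_{j^*}(v_1,\ldots,v_{d^*})=b/p$ with $b\not\equiv 0\pmod p$, forcing the integer coefficient $\sum_i\lambda_{i,j^*}\ell_{i,r_1^0}\cdots \ell_{i,r_{d^*}^0}$ to be divisible by $p$. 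Varying $(r_1^0,\ldots,r_{d^*}^0)$ over $[\ell]^{d^*}$ establishes the tensor identity $\sum_i \lambda_{i,j^*}\,L_i^{\otimes d^*}=0$ in $\F^{\ell^{d^*}}$.

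For the transfer to arbitrary classical homogeneous $Q$ of degree $d^*$, I would set $F(X):=\sum_i \lambda_{i,j^*}Q(L_i(X))$. Since $Q$ is homogeneous of degree $d^*$ and each $L_i$ is linear, every $Q\circ L_i$ is classical homogeneous of degree $d^*$, so $F$ is either identically zero or classical homogeneous of degree exactly $d^*$. Repeating the $d^*$-fold derivative and multilinear expansion for $\partial Q$ produces
\[
\partial F(Y_1,\ldots,Y_{d^*})=\sum_{r_1,\ldots,r_{d^*}}\Bigl(\sum_i \lambda_{i,j^*}\,\ell_{i,r_1}\cdots \ell_{i,r_{d^*}}\Bigr)\partial Q(y_{1,r_1},\ldots,y_{d^*,r_{d^*}}),
\]
every coefficient of which is now divisible by $p$; since $\partial Q$ has image in $\tfrac{1}{p}\Z/\Z$, every summand vanishes modulo $1$, so $\partial F\equiv 0$. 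This forces $\deg(F)<d^*$, and combined with the homogeneity of $F$ we conclude $F\equiv 0$, which is precisely the claim.

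The main obstacle is the reduction step: extracting from $\sum_i\lambda_{i,j^*}P_{j^*}(L_i(X))\equiv 0$ a consequence that depends only on the degree of $P_{j^*}$ and not on its depth $k^*$ or the precise choice of non-classical polynomial. Passing to $\partial P_{j^*}$ achieves this, because $\partial P_{j^*}$ is a nonzero classical multilinear form and its nonvanishing on $\tfrac{1}{p}\Z/\Z$ lets us isolate the tensor coefficients $\sum_i\lambda_{i,j^*}L_i^{\otimes d^*}\pmod p$; once this tensor identity is in hand, replaying the expansion for any classical homogeneous $Q$ of degree $d^*$ is routine.
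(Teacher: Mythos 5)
Your argument takes a genuinely different route from the paper's. The paper rewrites both $\sum_i\lambda_{i,j^*}P_{j^*}(L_i(X))$ and $\sum_i\lambda_{i,j^*}Q(L_i(X))$ over the reduced linear forms of \Cref{claim:lc1}, compares the resulting coefficients modulo $p$, and derives a contradiction via \Cref{pro:nearorthog-niceform} (so it uses the uniformity of the factor). You instead differentiate the hypothesis $\sum_i\lambda_{i,j^*}P_{j^*}(L_i(X))\equiv 0$ exactly $d^*$ times and use the multilinearity and nonvanishing of the classical polynomial $\partial P_{j^*}$ to extract the tensor identity $\sum_i\lambda_{i,j^*}L_i^{\otimes d^*}\equiv 0\pmod p$ directly. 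That part of your argument is correct, needs no uniformity at all, and in fact reaches the tensor conclusion that the paper only obtains at the very end by specializing $Q$ to a multilinear monomial.

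The gap is in the final step. From $\partial F\equiv 0$ you correctly conclude $\deg(F)<d^*$, but the inference ``$F$ is either identically zero or classical homogeneous of degree exactly $d^*$, hence $F\equiv 0$'' is not justified. In the paper's functional sense of homogeneity, a sum of homogeneous degree-$d^*$ polynomials need not have degree exactly $d^*$ or vanish: the relation $F(cX)=|c|^{d^*}F(X)$ only pins $\deg(F)$ down modulo $p-1$ (\Cref{remark:homogeneity}), so a nonzero $F$ of degree $d^*-(p-1)$ is a priori consistent with everything you have proved. This is precisely the low-characteristic degree-drop phenomenon the claim must handle; in characteristic $>d^*$ your inference would be fine. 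Concretely, a classical homogeneous $Q$ of degree $d^*$ may have monomials of every total degree $e\le d^*$ with $e\equiv d^*\pmod{p-1}$, and their contribution to $F$ is invisible to $\partial F$. The repair is available inside your own framework: the degree-$d^*$ tensor identity implies $\sum_i\lambda_{i,j^*}L_i^{\otimes e}\equiv 0\pmod p$ for every such $e$ (look at the entry of the degree-$(e+p-1)$ tensor indexed by a tuple in which one index is repeated $p$ times and use $\ell^p=\ell$ in $\F_p$, then iterate down from $d^*$); since expanding $Q(L_i(X))$ from the canonical representation of $Q$ creates no individual degrees $\ge p$, the canonical expansion of $\sum_i\lambda_{i,j^*}Q(L_i(X))$ then vanishes monomial by monomial. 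With that addition the proof is complete.
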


\begin{proof}
Assume to the contrary that $\sum_{i=1}^m \lambda_{i,j^*} Q(L_i(X)) \not\equiv 0$. By \Cref{claim:lc1} for degree $d^*$ and linear forms $L_1,\ldots, L_m$, we can find a set of coefficients $\{a_{i,t} \in \Z\}_{i\in [\ell], t\in [m']}$, $\{c_{i,t}\in \F\backslash\{0\}\}_{i\in [\ell], t\in [m']}$ and linear forms $\{M_t\}_{t\in [m']}$ with $|M_t| \le d^*$ such that the first nonzero entry of every $M_t$ is equal to $1$, such that
\begin{equation}\label{eq:classicalcancel}
\sum_{i=1}^m \lambda_{i,j^*} Q(L_i(X))= \sum_{i\in [\ell],t\in [m']}a_{i,t}\lambda_{i,j^*} Q(c_{i,t}M_t(X))=
\sum_{t=1}^{m'} \alpha_t Q(M_t(X)) \not\equiv 0,
\end{equation}
where $\alpha_t=  \sum_{i=1}^\ell a_{i,t}\lambda_{i,j^*} |c_{i,t}|^{d^*}$. Here the second equality follows from $Q$ being classical and homogeneous. Furthermore,
\begin{equation}
\sum_{i=1}^m \lambda_{i,j^*} P_{j^*}(L_i(X))= \sum_{i\in [\ell], t\in [m']}a_{i,t}\lambda_{i,j^*} P_{j^*}(c_{i,t}M_t(X))= \sum_{t=1}^{m'}  \beta_t P_{j^*}(M_t(X)),
\end{equation}
where $\beta_t=\sum_{i=1}^{\ell}a_{i,t} \lambda_{i,j^*} \sigma_{i,t}$, $\{\sigma_{i,t}\}_{i\in [\ell], t\in [m']}$ are integers whose existence follows from the homogeneity of $P_{j^*}$. Moreover, by \cref{remark:homogeneity} we know that $\sigma_{i,t} \equiv |c_{i,t}|^{d^*}\mod p$, and hence
\begin{equation}
\alpha_t \equiv \beta_t \mod p, \quad \forall t \in [m'].
\end{equation}
Now notice that \Cref{eq:classicalcancel} implies that there exists some $t\in [m']$ for which $\alpha_{t}Q \neq 0$, which, since $Q$ is classical, is equivalent to $\alpha_{t} \not\equiv 0 \mod p$. Hence also $\beta_{t} \not\equiv 0 \mod p$. Let $T=\{t \in [m']: \beta_{t} \not \equiv 0 \mod p\}$, which we just verified is nonempty. Then for $t \in T$, $\deg(\beta_t P_{j^*})=\deg(P_{j^*})=d^*$; and for $t \in [m] \setminus T$, $\deg(\beta_t P_{j^*}) \le d^*-(p-1) < d^*$.

We can now decompose
\begin{equation}\label{eq:suminToutT}
\sum_{i=1}^m \lambda_{i,j^*} P_{j^*}(L_i(X)) = \sum_{t \in T}  \beta_t P_{j^*}(M_t(X)) + \sum_{t \in [m] \setminus T}  \beta_t P_{j^*}(M_t(X)).
\end{equation}
By \Cref{pro:nearorthog-niceform}, the first sum in \Cref{eq:suminToutT} is a nonzero polynomial of degree $d^*$, and by our previous argument, the sum for $t \not\in T$ is a polynomial of degree less than $d^*$. Hence, we get that
$\sum_{i=1}^m \lambda_{i,j^*} P_{j^*}(L_i(X))$ is a nonzero polynomial of degree $d^*$, which is a contradiction to our assumption.
\end{proof}

We have proved that for every choice of a degree-$d^*$ classical homogeneous polynomial $Q$, $\sum_{i=1}^m \lambda_{i,j^*} Q(L_i(X))\equiv 0$. Now choosing the polynomial $Q(x(1),\ldots,x(n))=x(1)\cdots x(d^*)$ and looking at the coefficients of the monomials of degree $d^*$, we have
$$
\sum_{i=1}^m \lambda_{i,j^*} L_i^{d^*} \equiv 0.
$$
Recalling that $\lambda_{1,j^*}\neq 0$ and that $d^*>d$, this means that $L_1^{d+1}$ can be written as a linear combination of $L_2^{d+1},\ldots, L_m^{d+1}$, a contradiction.
\end{proofof}

\bibliographystyle{amsalpha}
\bibliography{equidistbib}

\end{document}